\numberwithin{equation}{section} 
\newcommand{\R}{\mathbb{R}}
\newcommand{\N}{\mathbb{N}}
\newcommand{\Prob}{\mathcal{P}}
\newcommand{\Leb}{\mathcal{L}} 
\newcommand{\A}{\mathcal{A}} 
\newcommand{\B}{\mathcal{B}} 
\newcommand{\K}{\mathcal{K}} 
\newcommand{\M}{\mathcal{M}}
\newcommand{\Id}{\mathrm{Id}}
\newcommand{\BV}{\mathrm{BV}}
\newcommand{\nO}{\mathbf{n}_\Omega} 
\newcommand{\ddr}{~\mathrm{d}} 
\newcommand{\dr}{\partial}
\newcommand{\1}{\mathbbm{1}} 
\newcommand{\dst}[1]{\displaystyle{#1}}
\newtheorem{theo}{Theorem}[section]
\newtheorem*{theo*}{Theorem}
\newtheorem{prop}[theo]{Proposition}
\newtheorem{crl}[theo]{Corollary}
\newtheorem{lm}[theo]{Lemma}
\newtheorem{defi}[theo]{Definition}
\newtheorem*{asmp*}{Assumptions}
\newcommand{\review}[1]{#1}
\title{New estimates on the regularity of the pressure\\ in density-constrained Mean Field Games}
\author{Hugo Lavenant\thanks{Laboratoire de Math\'ematiques d'Orsay, Univ. Paris-Sud, CNRS, Universit\'e Paris-Saclay, 91405 Orsay, cedex FRANCE, {\tt hugo.lavenant@math.u-psud.fr}}, \ Filippo Santambrogio\thanks{Institut Camille Jordan, Universit\'e Claude Bernard - Lyon 1, 43 boulevard du 11 novembre 1918, 
69622 Villeurbanne cedex
FRANCE, {\tt santambrogio@math.univ-lyon1.fr}}}
\date{\today}
\begin{document}

\maketitle

\begin{abstract} We consider variational Mean Field Games endowed with a constraint on the maximal density of the distribution of players. Minimizers of the variational formulation are equilibria for a game where both the running cost and the final cost of each player is augmented by a pressure effect, i.e. a positive cost concentrated on the set where the density saturates the constraint. Yet, this pressure is a priori only a measure and regularity is needed to give a precise meaning to its integral on trajectories. We improve, in the limited case where the Hamiltonian is quadratic, which allows to use optimal transport techniques after time-discretization, the results obtained in a paper of the second author with Cardaliaguet and M\'esz\'aros. We prove $H^1$ and $L^\infty$ regularity under very mild assumptions on the data, and explain the consequences for the MFG, in terms of the value function and of the Lagrangian equilibrium formulation.
\end{abstract}

\section{Introduction}

The main motivation of this paper is to provide improved regularity estimates about the pressure arising in a class of variational Mean Field Games (MFG) where the interaction between players is due to a density constraint $\rho\leq 1$ instead of arising from a penalization on the density itself. For the whole theory of Mean Field Games, a recent hot topic in applied mathematics introduced by Lasry and Lions in \cite{LL06cr2,LL07mf} and, independently, by Caines, Huang and Malam\'e in \cite{HMC}, we refer to the lecture notes by Cardaliaguet \cite{Carnotes} and to the video-recorded lectures by Lions, \cite{LLperso}. This theory is concerned with the behavior of a continuous family of rational agents, who need to choose a strategy on how to move in a domain where they meet other agents and their cost is affected by their presence. The goal is to study the Nash equilibria and characterize them in terms of PDEs. Most models assume stochastic effects on the trajectory of the agents, and the corresponding PDEs include diffusion terms which make the solution smooth and simplify the analysis, besides being reasonable from the modeling point of view. Analytically, the most difficult case consists in problems where the interaction between players is local (i.e. the cost at point $x$ and time $t$ depends on the value of the density $\rho_t(x)$, without averaging it in a neighborhood) and no diffusion is present. This case is essentially attacked when the game is of variational origin, i.e. it is a potential game, and the equilibrium condition arises as an optimality condition for an optimization problem in the class of density evolutions. For local potential MFG, we refer to \cite{Cardaliaguet2015,CardaliaguetG2015} and to the survey \cite{BenCarSan}.

A particular potential MFG has been studied in \cite{Cardaliaguet2016}, where the density $\rho$ is constrained to be below a certain threshold, which represents a capacity constraint of the transportation network or of the medium where agents move. In this case a pressure appears: according to what we know from fluid mechanics, the pressure is a scalar field, vanishing where the density does not saturate the constraint, and its gradient affects the accelaration of the particles. In terms of the equilibrium problem, the pressure is a price to pay to pass through saturated regions. This means that agents compute their total cost by integrating the pressure along their trajectory, but this generates some issues about its regularity, since it is a priori not well-defined on negligible sets such as curves.

Inspired by the considerations in \cite{af,af2}, in \cite{Cardaliaguet2016} two facts were proven. First, there is a way to define a precise representative $\hat p$ of the pressure $p$ such that almost every trajectory followed by the agents optimize a cost involving $\omega\mapsto \int_0^1 \hat p(t,\omega(t))dt$ among curves $\omega$ such that $\int_0^1 (Mp)(t,\omega(t))dt<+\infty$ ($Mp$ is the maximal function of $p$; such an integrability condition is required since $\hat p$ is defined as a limit of averages on balls and this is necessary to pass to the limit the averaged estimations). Second, the pressure $p$ belongs, under some assumptions on the data, to $L^2_{t,loc} BV_x$ (same regularity as the one obtained in \cite{af}), which guarantees $L^{1+\varepsilon}$ summability. This implies, by well-known harmonic analysis results, that $Mp$ is also summable to the same power, and guarantees that the class of curves satisfying the integrability condition on $Mp$ is large enough.

In the present paper this result is improved in several different ways, but we have to pay a price: we need to specialize to the case where the Hamiltonian is quadratic. This means that the cost payed by the agents following a trajectory $\omega$ is of the form
$$\int_0^1 \left(\frac12 |\dot{\omega}(t)|^2+\tilde V(t,\omega(t))\right)dt+\tilde\Psi(\omega(1)),$$
where $\tilde V=V+p$ is given by a running cost plus the pressure, and $\tilde \Psi$ is given by the final cost, augmented by a final pressure $P_1$. Note that we cannot exclude the presence of a pressure effect concentrated at the final time (the pressure should be considered as a measure with a singular part concentrated on $t=1$); we know that it is indeed possible to observe it in some particular cases, and this is indeed the role played by $P_1$ (see Sections \ref{heur} and \ref{pridual}).

In particular, the dependence in the velocity is quadratic, instead of using more general convex, and possibly space-dependent, functions $L(\omega(t),\dot{\omega}(t))$. This allows to use properties of optimal transport and of the Wasserstein distance $W_2$. In this precise setting, the results in \cite{Cardaliaguet2016} provided $L^2_{t,loc} BV_x$ regularity for the pressure if both the running cost $V$ and the final cost $\Psi$ were $C^{1,1}$; here (see Theorem \ref{theorem_main} for the precise statement) we manage to obtain $L^\infty_t H^1_x$ under the only assumption $V\in H^1$. Moreover, we obtain an inequality on the Laplacian of the pressure which, thanks to quite standard Moser iterations, provides $p\in L^\infty$ as soon as $V\in W^{1,q}$ for $q>d$ ($d$ being the space dimension). This boundedness is very important since it implies that the integrability condition on $Mp$ is always satisfied, and the class of competitor curves in the equilibrium condition includes now all curves. Finally, similar result are obtained for the singular pressure $P_1$ at time $t=1$, while \cite{Cardaliaguet2016} did not adress its behavior. As a last remark, the achievements of the present paper are global on a general bounded convex space domain, while the techniques in \cite{Cardaliaguet2016} could not easily be adapted to domains with boundary (they were presented in the torus; the technique can be adapted to other domains but only obtaining local results).

The main tool to obtain the desired estimate is an inequality on $\Delta (p+V)$, valid at any time $t$ on the saturated region where $p>0$, and all the estimates derive from this one (analogously, we also use a similar inequality on $\Delta(P_1+\Psi)$). We first provide (in Section \ref{heur}) an heuristic derivation of this inequality, based on the use of the convective derivative along the flow. In order to make the proof rigorous, the strategy is very much inspired from our previous work \cite{LS2017}, based on time-discretization, even if the inequalities we use are not the same.

As we said at the beginning, MFG with density constraints are the motivation for this work and they are the setting where these estimates show better their potential of applications. However, we believe that the technique deserves attention both for its remarkable simplicity, and for the possibility of being applied to other settings.

A similar setting can be found in the variational formulation by Brenier of the incompressible Euler equation. Yet, the sharp regularity of the pressure is in such a setting an open problem, as semi-concavity is a reasonable conjecture advanced by Brenier, but the current achievements do not go beyond the $L^2_t BV_x$ result mentioned above. Yet, due to the multiphasic nature of the problem formulated by Brenier, it is in general not possible to translate all the available techniques into such a more complicated setting (see for instance \cite{Lav}  where the time-convexity of the entropy is proven, but, differently from \cite{LS2017}, the same cannot be obtained for other internal energies; analogously, the same results of \cite{Lav} are also recovered in \cite{BarMon}, and the same algebraic obstruction prevents from generalizing the result to more general energies). On the other hand, the works on density-constrained MFG (including a first attempt, with a non-variational model, in \cite{modestproposal}) were inspired by previous works of the second author on crowd motion formulated as a gradient flow with density constraints (see \cite{Maury2010} and \cite{survey-crowd}), and the present technique seems possible to be applied to such a first-order (in time) setting. 
\review{Indeed, as explained in the core of the article, the technique of proof for the regularity of the final pressure $P_1$ is performed exactly as if we had a JKO scheme for a gradient flow (see \cite{JKO}).}

From the point of view of generalizations, in particular to other Hamiltonians, or to quadratic Hamiltonians on manifold (which could require to use curvature assumptions on the manifold), it seems that the main point is the computation of the Laplacian of the Hamilton-Jacobi equation: once suitable inequalities are available on it, the approach could be generalized. 

The paper is organized as follows. In the rest of Section 1 we present first the heuristic derivation of our estimates and then some consequences, in the framework of MFG, on the regularity of the value function. Section 2 presents useful preliminaries, then the context and the precise results we will prove, and finally the time-discrete approximation we choose. Section 3 contains the main estimates, divided into interior regularity for $p$ and then ``boundary'' regularity for $P_1$. Section 4 shows how to translate the discrete estimates into continuous ones, by providing limit results both on the primal and on the dual problem.

\subsection{Heuristic derivation of the estimates}\label{heur}

We start with the MFG system, which can be obtained as a consequence of the primal-dual optimality conditions of a variational problem, see \cite{Cardaliaguet2016}. These conditions read, for functions depending on time $t \in [0,1]$ and space $x \in \Omega$,
\begin{equation*}
\begin{cases}
\dr_t \rho - \nabla \cdot(\rho \nabla \phi)& = 0,  \\
- \dr_t \phi + \frac{1}{2} |\nabla \phi|^2 & \leq P+V \quad\mbox{(with equality on $\{\rho>0\}$)}, \\
\rho(0, \cdot) & = \bar{\rho}_0, \\
\phi(1, \cdot) & \leq \Psi\quad \mbox{(with equality on $\{\rho_1>0\}$)},
\end{cases}
\end{equation*} 
where $P\geq 0$ is a measure concentrated on the set $\{\rho=1\}$.
\review{As far as this heuristic justification is concerned, we will just look at the conditions which are satisfied on the support of $\rho$, where the inequalities become equalities. As we are interested in estimates on the pressure $P$, i.e. on the set $\{\rho=1\}$, this is a legit restriction.}
As we will see later, the pressure $P$ is a measure which can be decomposed into two parts: its restriction to $[0,1)\times \Omega$ is absolutely continuous w.r.t. the Lebesgue measure on $[0,1)\times \Omega$, and its density is denoted by $p$; on the other hand, there is also a part on $\{1\}\times \Omega$ which is singular, but absolutely continuous w.r.t. the Lebesgue measure on $\Omega$, and its density is denoted by $P_1$. This second part represents a jump of the function $\phi$ at $t=1$, which allows to re-write the system as follows.

\begin{equation}
\begin{cases}
\label{equation_MFG_system}
\dr_t \rho - \nabla \cdot(\rho \nabla \phi)& = 0,  \\
- \dr_t \phi + \frac{1}{2} |\nabla \phi|^2 & = p+V, \\
\rho(0, \cdot) & = \bar{\rho}_0, \\
\phi(1, \cdot) & = \Psi + P_1.
\end{cases}
\end{equation} 
where the density $\rho$ satisfies $\rho \leqslant 1$ everywhere and $p,P_1 \geqslant 0$ are strictly positive only on the regions where the constraint involving $\rho$ is saturated, i.e. where $\rho = 1$ ($\rho_1=1$ in the case of $P_1$). 

We denote by $D_t := \dr_t - \nabla \phi \cdot \nabla$ the convective derivative. The idea is to look at the quantity $- D_{tt} (\ln \rho)$. Indeed, the first equation of \eqref{equation_MFG_system} can be rewritten $D_t (\ln \rho) = \Delta \phi$. On the other hand, taking the Laplacian of the second equation in \eqref{equation_MFG_system}, it is easy to get, dropping a positive term, $-D_t (\Delta \phi) \leqslant \Delta (p+V)$. Hence, 
\begin{equation}
\label{equation_estimate_continuous}
- D_{tt}(\ln \rho) \leqslant \Delta (p+V).
\end{equation}  
Notice that if $\rho(\bar{t},\bar{x}) = 1$, then $\rho$ is maximal at $(\bar{t},\bar{x})$ hence $- D_{tt}(\ln \rho)(\bar{t},\bar{x}) \geqslant 0$. On the other hand, if $\rho(\bar{t},\bar{x}) < 1$ then $p(\bar{t},\bar{x}) = 0$. In other words, $p$ satisfies $\Delta p \geqslant - \Delta V$ on $\{ p > 0 \}$, which looks like an obstacle problem. Multiplying \eqref{equation_estimate_continuous} by $p$, integrating w.r.t. space at a given instant in time and doing an integration by parts, for all $t$ 
\begin{equation}
\label{equation_fundamental_heuristic}
\int_\Omega \nabla p(t, \cdot) \cdot \nabla (p(t, \cdot)+V) \leqslant \int_{\dr \Omega} p(t, \cdot) [\nabla (p(t, \cdot)+V) \cdot \nO], 
\end{equation}  
where $\nO$ is the outward normal to $\Omega$. 
\review{As $\nabla (p+V)$ is the acceleration of the agents, who are constrained to stay in $\Omega$, under the assumption that $\Omega$ is convex we get $\nabla (p(t, \cdot)+V) \cdot \nO \leqslant 0$, hence the l.h.s. of \eqref{equation_fundamental_heuristic} is negative.} 
From this we immediately see that $\| \nabla p(t, \cdot) \|_{L^2(\Omega)} \leqslant \| \nabla V \|_{L^2(\Omega)}$, i.e. that $p \in L^\infty((0,1) ; H^1(\Omega))$. Moreover, \review{taking $m > 1$ and mutliplying by $p^m$, and provided that $\nabla V \in L^q(\Omega)$ with $q > d$, we can use Moser iterations (i.e. iterating the inequality we obtain for different values of $m$) to prove that $p(t, \cdot) \in L^\infty(\Omega)$ with a norm depending only on $V$ and $\Omega$}. For the final pressure $P_1$, we only look at $D_t (\ln \rho) = \Delta \phi$. Using the equation for the terminal value of $\phi$, 
\begin{equation}
\label{equation_estimate_continuous_final}
D_{t}(\ln \rho)(1, \cdot) = \Delta (P_1+\Psi).
\end{equation}  
The l.h.s. is positive at every point $\bar{x}$ such that $\rho(1, \bar{x}) = 1$, hence we get $\Delta(\Psi + P_1) \geqslant 0$ on $\{ P_1 > 0 \}$. From exactly the same computations, we deduce $\| \nabla P_1 \|_{L^2(\Omega)} \leqslant \| \nabla \Psi \|_{L^2(\Omega)}$ and the $L^\infty(\Omega)$ norm of $P_1$ depends only on $\Omega$ and $\Psi$ provided that $\nabla \Psi \in L^q(\Omega)$ with $q > d$.  

Let us say that this strategy, namely looking at the convective derivative of quantities such as $\ln \rho$ was in fact already used by Loeper \cite{Loeper2006} to study a problem similar to ours (related to the reconstruction of the early universe), but in a case without potential and where $\Delta p := \rho - 1$. In his case, \eqref{equation_estimate_continuous} leads to a differential inequality involving only $\rho$ from which a $L^\infty$ bound on $\rho$ was deduced. 

\review{The assumption of convexity of $\Omega$ can be surprising, but is crucial for our method, as it was already the case in our previous work \cite{LS2017}. Roughly speaking, it prevents the interaction with the boundary from causing congestion: only the potentials $V$ and $\Psi$ are a source of congestion. As far as we can see, we do not know how to relax the assumption of convexity of $\Omega$ and still be able to control the r.h.s. of \eqref{equation_fundamental_heuristic}.}


\review{It is also possible to adapt these heuristic computations to the case of more general Hamiltonians. For instance, if we replace $\Omega$ by a Riemannian manifold (i.e. if we stick to a quadratic Lagrangian and Hamiltonian but insert a specific $x$-dependency), it is clear that the heuristic computation can be performed exactly in the same way provided that the manifold has a positive Ricci curvature, as the inequality involving the Laplacian of the Hamilton-Jacobi equation can be deduced from Bochner's formula. On the other hand, if we take a Lagrangian which is not quadratic, for instance of the form $L(\dot{\omega})$ (we omit explicit dependence on the point, not to overburden the computations), the mean field game system now reads (we have not included the temporal boundary conditions)
\begin{equation*}
\begin{cases}
\dr_t \rho - \nabla \cdot(\rho \nabla H(\nabla \phi))& = 0,  \\
- \dr_t \phi + H(\nabla \phi) & = p+V,
\end{cases}
\end{equation*} 
where $H$ is the Legendre transform of $L$.
Compared to the previous system \eqref{equation_MFG_system}, the velocity of the agents appearing in the continuity equation is given by $ - \nabla H(\nabla \phi)$ and the convective derivative is now $D_t = \dr_t - \nabla  H(\nabla\phi) \cdot \nabla$. Expanding the continuity equation leads to 
\begin{equation*}
D_t (\ln \rho) = \nabla \cdot ( \nabla  H(\nabla\phi) ).
\end{equation*}
For instance, for $L(\dot\omega)=|\dot\omega|^r/r$, we have $H(z)=|z|^q/q$, where $q$ is the conjugate exponent to $r$, and we have $D_t (\ln \rho) =\Delta_q(\phi)$. Because of the non-linearity of this term, as the reader can see, computations become much more involved than the quadratic case, but it is still possible to obtain an inequality of the form
$$\nabla\cdot (D^2H(\nabla\phi)\nabla(p+V))\geq 0,$$
which proves again that $p+V$ is a subsolution of a suitable elliptic equation. Unfortunately this equation, though being linear in $p+V$, can be highly degenerate if $H$ is not uniformly elliptic, and the analysis of the resulting estimates is far from the objectives of this paper.}

\subsection{On the consequences for the value function and the Lagrangian point of view}

Though they can be seen as interesting in themselves, the estimates obtained on the pressure have consequences for the interpretation of the MFG system. 

The first one has to do with the regularity of the value function $\phi$. Indeed, as understood in the works by Cardaliaguet and collaborators \cite{Cardaliaguet2015, CardaliaguetG2015, CardaliaguetPT2015}, a solution of a Hamilton-Jacobi equation exhibits regularity as soon as the r.h.s. is bounded from below and its positive part lies in $L^{1+d/2 + \varepsilon}$. In the aforementioned articles, such an assumption on the r.h.s. was obtained by assuming a moderate growth on the penalization of congestion in the primal problem. In the case studied in this article, where the density is forced to stay below a given threshold, the naive estimate on the r.h.s. of the Hamilton-Jacobi equation leads to a $L^1$ bound. The estimates obtained previously in \cite{Cardaliaguet2015} do not allow to deduce regularity of the value function (except if $d=1$). However, with what we prove in the present paper, one can deduce the following (see Section \ref{pridual} below for the definition of the dual and primal problem). 

\begin{prop}
Assume that either $V \in H^1(\Omega)$ and $d \leqslant 4$ or that $V \in W^{1,q}(\Omega)$ with $q > d$. Then there exists $(\bar{\phi}, \bar{P})$ a solution to the dual problem such that, for any $[0, T] \times \tilde{\Omega}$ compactly embedded in $[0,1) \times \mathring{\Omega}$, the function $\bar{\phi}$ is Hölder-continuous on $[0, T] \times \tilde{\Omega}$ and $\dr_t \bar{\phi} \in L^{1+\varepsilon}([0, T] \times \tilde{\Omega})$, $\nabla \bar{\phi} \in L^{2+\varepsilon}([0, T] \times \tilde{\Omega})$ for some $\varepsilon > 0$.  
\end{prop}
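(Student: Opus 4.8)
The plan is to combine the $L^\infty_tH^1_x$ (and $L^\infty$) regularity of the pressure $p$ established in the main theorem with the regularity theory for Hamilton--Jacobi equations developed by Cardaliaguet and collaborators. The starting point is the observation that the dual-optimal $\bar\phi$ solves, in the viscosity or distributional sense, the Hamilton--Jacobi equation
\[
-\dr_t\bar\phi+\tfrac12|\nabla\bar\phi|^2=p+V
\]
on $[0,1)\times\Omega$, as in \eqref{equation_MFG_system}. The right-hand side $f:=p+V$ is bounded from below (since $p\ge 0$ and $V\in H^1\subset L^1$, but more precisely $V\in W^{1,q}$ or $V\in H^1$ with the stated integrability), so the only thing to check is that its positive part lies in $L^{1+d/2+\varepsilon}_{\mathrm{loc}}$. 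This is exactly where the two cases in the statement come from: if $V\in W^{1,q}(\Omega)$ with $q>d$, then by the Moser-iteration conclusion of the main theorem $p\in L^\infty((0,1);L^\infty(\Omega))$ and $V\in L^\infty$ by Morrey's embedding, so $f\in L^\infty_{\mathrm{loc}}$, which is more than enough. If instead only $V\in H^1(\Omega)$, then the main theorem gives $p\in L^\infty((0,1);H^1(\Omega))$; by Sobolev embedding $H^1(\Omega)\hookrightarrow L^{2^*}(\Omega)$ with $2^*=2d/(d-2)$ (and any exponent in dimension $d\le 2$), so $f(t,\cdot)\in L^{2^*}(\Omega)$ uniformly in $t$, hence $f\in L^{2^*}([0,T]\times\tilde\Omega)$. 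One then checks that $2^*\ge 1+d/2+\varepsilon$ for some $\varepsilon>0$ precisely when $\frac{2d}{d-2}>1+\frac d2$, i.e. when $d\le 4$; this is the source of the restriction $d\leqslant 4$.

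Next I would invoke the interior regularity result for Hamilton--Jacobi equations with quadratic Hamiltonian and right-hand side whose positive part is in $L^{1+d/2+\varepsilon}$, as in \cite{Cardaliaguet2015, CardaliaguetG2015, CardaliaguetPT2015}: this yields that $\bar\phi$ is locally Hölder continuous in $[0,1)\times\mathring\Omega$ and that $\dr_t\bar\phi\in L^{1+\varepsilon'}_{\mathrm{loc}}$, $\nabla\bar\phi\in L^{2+\varepsilon'}_{\mathrm{loc}}$ for some $\varepsilon'>0$. Concretely, for a cylinder $[0,T]\times\tilde\Omega$ compactly contained in $[0,1)\times\mathring\Omega$ one picks a slightly larger cylinder still compactly contained in $[0,1)\times\mathring\Omega$, on which $f^+$ has the required integrability, and applies the cited local estimates there. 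The Hölder continuity and the gain of integrability on $\nabla\bar\phi$ follow from the by-now standard argument that $|\nabla\bar\phi|^2 = 2(p+V) + 2\dr_t\bar\phi$ together with a bootstrap using the equation and the a priori bound; the $L^{2+\varepsilon}$ bound on $\nabla\bar\phi$ is then equivalent to an $L^{1+\varepsilon}$ bound on $\dr_t\bar\phi$ via the equation, once both sides are known to be locally integrable.

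The existence of a dual optimizer $(\bar\phi,\bar P)$ for which these interior estimates actually hold — rather than holding only for \emph{some} representative — is guaranteed by the limiting procedure from the time-discrete scheme described in Section~4: the discrete value functions are equi-Hölder and equi-bounded in the relevant Sobolev norms on interior cylinders (by the discrete analogues of the main estimates), so a compactness argument produces a limit $\bar\phi$ inheriting these bounds, and the limit pair is shown there to solve the dual problem. The main obstacle, as I see it, is not the Hamilton--Jacobi regularity input, which is essentially a black box from the cited works, but rather making sure that the particular $\bar\phi$ constructed as a limit of the discretization is the one to which the HJ theory applies, i.e. that the equation $-\dr_t\bar\phi+\frac12|\nabla\bar\phi|^2 = p+V$ holds in the correct (viscosity, or renormalized) sense along the limit and that the pressure $p$ appearing there is precisely the one carrying the $H^1$/$L^\infty$ bounds. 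This bookkeeping — linking the primal problem, the dual problem, and the discrete approximations so that all the estimates refer to the same objects — is exactly what Section~\ref{pridual} and Section~4 are set up to do, so the proof amounts to quoting those identifications and then feeding $f^+\in L^{1+d/2+\varepsilon}_{\mathrm{loc}}$ into the HJ regularity machinery.
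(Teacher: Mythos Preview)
Your proposal is correct and matches the paper's own sketch: use Theorem~\ref{theorem_main} plus Sobolev embeddings to place $\bar p + V$ in $L^r$ with $r>1+d/2$ (your computation $2^*=2d/(d-2)>1+d/2$ iff $d\leqslant 4$ is exactly the source of that hypothesis), and then invoke \cite[Lemma~3.5]{Cardaliaguet2015} and \cite[Theorem~1.1]{CardaliaguetPT2015} as black boxes for the H\"older and Sobolev regularity respectively. One small correction to your last paragraph: the paper does \emph{not} establish equi-H\"older or improved Sobolev bounds on the discrete value functions $\phi^N$ --- the limit in Section~4 is taken only in $\BV\cap L^2_tH^1_x$, and the H\"older/Sobolev regularity of $\bar\phi$ is obtained purely \emph{a posteriori} by applying the cited HJ results to the continuous subsolution; since those results hold for any subsolution with right-hand side in $L^{1+d/2+\varepsilon}$, the ``representative'' issue you raise does not actually arise.
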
    

\begin{proof}[Sketch of the proof]
The proof of this result can be obtained thanks to Theorem \ref{theorem_main} proved below. Indeed, if either one of \review{these} assumption is true, it implies (thanks to Sobolev injections) that the density $\bar p$ of $\bar P$ is such that $\bar{p}+V \in L^{r}([0, T] \times \tilde{\Omega})$ for some $r > 1+d/2$. Combined with \cite[Lemma 3.5]{Cardaliaguet2015} (for the Hölder regularity) and \cite[Theorem 1.1]{CardaliaguetPT2015} (for the Sobolev regularity), we deduce the result. 
\end{proof}

On the other hand, we can also deduce some information about the Lagrangian point of view, for instance when we have $L^\infty$ bounds on the pressure. The necessity for such bounds is explained in details in the introduction of our previous article \cite{LS2017}, let us just say that it has to do with how one chooses the correct representative for the pressure. We do not reproduce the derivation here, but just state the result. For that, we need to choose $\bar{P}$ a solution of the dual problem. We know from our theorem \ref{theorem_main} that it has a density that we will call $\bar p$ on $[0,1)\times\Omega$, and we select a precise representative of it, according to the formula in \cite{af2}, in the following way: 
\begin{equation}
\label{equation_p_limsup}
\hat{p}(t,x) = \limsup_{\varepsilon \to 0} \frac{1}{|B(x, \varepsilon)|} \int_{B(x,\varepsilon)} \bar{p}(t,y) \ddr y.
\end{equation}

Notice that if $V, \Psi \in W^{1,q}(\Omega)$, we know that $V,\Psi$ are continuous and (thanks to Theorem \ref{theorem_main}), that $\bar{p}$ is in $L^\infty((0,1)\times\Omega)$. Analogously, we also set
\begin{equation}
\label{equation_pp_limsup}
\hat{P}_1(x) = \limsup_{\varepsilon \to 0} \frac{1}{|B(x, \varepsilon)|} \int_{B(x,\varepsilon)} \bar{P}_1(y) \ddr y.
\end{equation}

 We will need to use the notation $e_t : \omega \in H^1([0,1], \Omega) \to \omega(t) \in \Omega$ which denotes the evaluation operator. 

\begin{prop}
\label{proposition_Nash_equilibrium}
\review{Assume that $V, \Psi \in W^{1,q}(\Omega)$, that $(\bar{\phi},\bar{P})$ is the solution of the dual problem given in Theorem \ref{theorem_main}, and that $\hat{p}, \hat{P}_1$ are defined everywhere by \eqref{equation_p_limsup} and \eqref{equation_pp_limsup}. }
Then there exists $Q \in \Prob(H^1([0,1], \Omega))$ a measure on the set of $H^1$ curves valued in $\Omega$ such that $\rho : t \to e_t \# Q$ is a solution of the primal problem and, for $Q$-a.e. $\gamma$, the curve $\gamma$ minimizes 
\begin{equation*}
\omega \to \frac{1}{2} \int_0^1 |\dot{\omega}(t)|^2 \ddr t + \int_0^1 \left( V(\omega(t)) + \hat{p}(t, \omega(t)) \right) \ddr t + \left( \Psi(\omega(1)) + \hat{P}_1(\omega(1)) \right)
\end{equation*}   
among all curves $\omega \in H^1([0,1], \Omega)$ such that $\omega(0) = \gamma(0)$.
\end{prop}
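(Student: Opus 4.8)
Set, for $\omega\in H^1([0,1],\Omega)$,
\[
J(\omega):=\tfrac12\int_0^1|\dot\omega(t)|^2\ddr t+\int_0^1\bigl(V(\omega(t))+\hat p(t,\omega(t))\bigr)\ddr t+\Psi(\omega(1))+\hat P_1(\omega(1)).
\]
The plan is to combine the superposition principle for the continuity equation of the primal optimizer, a verification (\emph{a posteriori}) argument for the Hamilton--Jacobi inequation satisfied by the dual optimizer, and the new input of Theorem~\ref{theorem_main}, namely $\bar p,\bar P_1\in L^\infty$; it is this last fact that makes $J(\omega)$ finite and meaningful along \emph{every} $H^1$ curve, so that the competitor class becomes all of $H^1([0,1],\Omega)$ rather than only the curves satisfying the integrability restriction on $M\bar p$ used in \cite{Cardaliaguet2016}.

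First I would fix the primal optimizer $\bar\rho$ and the dual optimizer $(\bar\phi,\bar P)$ given by Theorem~\ref{theorem_main}; the primal--dual optimality conditions provide the continuity equation $\dr_t\bar\rho-\nabla\cdot(\bar\rho\nabla\bar\phi)=0$ and the relation $-\dr_t\bar\phi+\tfrac12|\nabla\bar\phi|^2=\bar p+V$ (with equality on $\{\bar\rho>0\}$ and $\le$ in general), together with $\bar\phi(1,\cdot)=\Psi+\bar P_1$ on $\{\bar\rho_1>0\}$ ($\le$ in general). Since $\int_0^1\!\int_\Omega|\nabla\bar\phi|^2\bar\rho<+\infty$, the superposition principle yields $Q\in\Prob(H^1([0,1],\Omega))$ with $e_t\#Q=\bar\rho_t$ for every $t$ and such that $Q$-a.e.\ $\gamma$ solves $\dot\gamma(t)=-\nabla\bar\phi(t,\gamma(t))$ for a.e.\ $t$; in particular $t\mapsto e_t\#Q=\bar\rho_t$ is a solution of the primal problem, which already settles that part of the statement. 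Boundedness of $\bar p,\bar P_1,V,\Psi$ and $\int\!\int|\dot\gamma|^2\ddr t\,\ddr Q<+\infty$ make $J$ finite along $Q$-a.e.\ $\gamma$ and along every competitor.

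For the minimality I would run a verification argument. Mollifying $\bar\phi$ in space, $\phi^\delta:=\bar\phi\ast\eta_\delta$, linearity in $\dr_t$ and Jensen's inequality for $z\mapsto\tfrac12|z|^2$ give $-\dr_t\phi^\delta+\tfrac12|\nabla\phi^\delta|^2\le(\bar p+V)\ast\eta_\delta$ a.e.\ and $\phi^\delta(1,\cdot)\le(\Psi+\bar P_1)\ast\eta_\delta$; hence, for any $\omega\in H^1([0,1],\Omega)$, the chain rule together with $\nabla\phi^\delta\cdot\dot\omega\ge-\tfrac12|\nabla\phi^\delta|^2-\tfrac12|\dot\omega|^2$ gives
\[
\frac{\mathrm{d}}{\mathrm{d}t}\,\phi^\delta(t,\omega(t))\ \ge\ -\tfrac12|\dot\omega(t)|^2-(\bar p+V)\ast\eta_\delta\,(t,\omega(t)),
\]
and integrating on $[0,1]$, $\phi^\delta(0,\omega(0))\le\tfrac12\int_0^1|\dot\omega|^2+\int_0^1(\bar p+V)\ast\eta_\delta\,(t,\omega(t))\ddr t+(\Psi+\bar P_1)\ast\eta_\delta\,(\omega(1))$. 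Letting $\delta\to0$: $V$ and $\Psi$ being continuous (as $q>d$), $\bar\phi(0,\cdot)$ having a continuous representative on the interior (see the previous Proposition and \cite{Cardaliaguet2015}), and $\bar p,\bar P_1$ being bounded, a reverse-Fatou argument combined with the definition of $\hat p,\hat P_1$ as $\limsup$ of averages — this is the representative-selection mechanism of \cite{af2,Cardaliaguet2016} — bounds the limit by $J(\omega)$, so that $\bar\phi(0,\omega(0))\le J(\omega)$ for every competitor. On the other hand, for $Q$-a.e.\ $\gamma$ one has $(t,\gamma(t))\in\{\bar\rho>0\}$ for a.e.\ $t$ and $\gamma(1)\in\{\bar\rho_1>0\}$, so the Hamilton--Jacobi relation is an equality along $\gamma$ and the choice $\dot\gamma=-\nabla\bar\phi(t,\gamma)$ turns the Young inequality above into an equality; integrating yields $\bar\phi(0,\gamma(0))=\tfrac12\int_0^1|\dot\gamma|^2+\int_0^1(\bar p+V)(t,\gamma)\ddr t+\Psi(\gamma(1))+\bar P_1(\gamma(1))$, and since $\{\hat p\ne\bar p\}$ and $\{\hat P_1\ne\bar P_1\}$ are Lebesgue-negligible while $\bar\rho_t,\bar\rho_1\ll\Leb$, we may replace $\bar p,\bar P_1$ by $\hat p,\hat P_1$ for $Q$-a.e.\ $\gamma$, getting $J(\gamma)=\bar\phi(0,\gamma(0))$. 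Combining, for $Q$-a.e.\ $\gamma$ and every $\omega\in H^1([0,1],\Omega)$ with $\omega(0)=\gamma(0)$, $J(\omega)\ge\bar\phi(0,\omega(0))=\bar\phi(0,\gamma(0))=J(\gamma)$, which is the claimed minimality.

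The hard part is the rigorous execution of the verification argument, i.e.\ (i) the chain rule for $t\mapsto\phi^\delta(t,\omega(t))$ and the control of the behaviour near $t=1$, where $\bar\phi$ jumps by the singular pressure $\bar P_1$, and near $\dr\Omega$, where the convexity of $\Omega$ must be invoked exactly as in the discussion leading to \eqref{equation_fundamental_heuristic} (so that characteristics remain admissible and no wrong-sign boundary contribution appears), and (ii) the limit $\delta\to0$ reconciling the ball-average representatives $\hat p,\hat P_1$ with the smoothed quantities produced by the chain rule. Everything else is soft, and the $L^\infty$ bounds from Theorem~\ref{theorem_main} are precisely what removes, in step (ii), any integrability restriction on the admissible competitors.
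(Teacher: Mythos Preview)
The paper does not give a proof of this proposition: immediately before the statement it says ``We do not reproduce the derivation here, but just state the result,'' referring the reader to \cite{LS2017} (and, implicitly, to \cite{af2,Cardaliaguet2016}) for the argument. So there is no proof in the paper to compare against.

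Your outline is exactly the standard verification argument used in those references: superposition principle to produce $Q$, Hamilton--Jacobi subsolution inequality to get $\bar\phi(0,\omega(0))\le J(\omega)$ for all competitors, equality along $Q$-a.e.\ trajectory via the optimality conditions, and the $\limsup$ representative of \cite{af2} to make sense of $\hat p,\hat P_1$ pointwise. You also correctly isolate the new contribution of the present paper, namely that the $L^\infty$ bounds on $\bar p,\bar P_1$ from Theorem~\ref{theorem_main} remove the restriction $\int_0^1 (M\bar p)(t,\omega(t))\ddr t<+\infty$ on the class of competitors. This matches precisely what the paper claims the proposition delivers.

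One caution on your sketch: the mollification is purely spatial, so $\phi^\delta$ inherits only $\BV$ regularity in time from $\bar\phi$; the chain rule for $t\mapsto\phi^\delta(t,\omega(t))$ therefore has to be written distributionally, with the singular part of $\dr_t\phi^\delta$ (concentrated at $t=1$ and possibly elsewhere) handled separately. You flag this as ``hard part (i)'', which is fair, but be aware that in \cite{Cardaliaguet2016} this is dealt with not by a naive chain rule on the mollified function but by working with the precise $\BV$ structure of $\bar\phi$ and the one-sided traces; the argument there is somewhat more delicate than a Young-inequality computation on a smooth function. Also, the continuity of $\bar\phi(0,\cdot)$ that you invoke from the previous proposition is only local in $\mathring\Omega$, so the inequality $\bar\phi(0,\omega(0))\le J(\omega)$ should first be obtained for curves starting in $\mathring\Omega$ and then extended (or one argues that $Q$-a.e.\ $\gamma$ starts there, since $\bar\rho_0\ll\Leb$). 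These are refinements rather than gaps; the overall strategy is the right one.
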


\noindent If we come back to the interpretation of the MFG system, $Q$ exactly describes the strategy of all the agents: it is the ditribution of mass on the possible strategies, and Proposition \ref{proposition_Nash_equilibrium} reads as the fact that $Q$ is a Nash-equilibrium, provided that the agents pay the prices $\bar{p}$ and $\bar P_1$ in the regions where the constraint $\rho \leqslant 1$ is saturated.  

\section{Notations, optimal transport and the variational problems}

In all the sequel, $\Omega$ will denote the closure of an open bounded convex domain of $\R^d$ with smooth boundary. We assume that the Lebesgue measure of $\Omega$, denoted by $|\Omega|$ is strictly larger than $1$: it will be necessary to get existence of probability measures on $\Omega$ with density bounded by $1$. The generalization to the case where $\Omega$ is the $d$-dimensional torus is straightforward and we do not address it explicitly. The space of probability measures on $\Omega$ will be denoted by $\Prob(\Omega)$. This space $\Prob(\Omega)$ is endowed with the weak-* topology, i.e. the topology coming from the duality with $C(\Omega)$, the continuous functions from $\Omega$ valued in $\R$.  

We will also make use of the space of positive measures on the product $[0,1] \times \Omega$ which we denote by $\M_+([0,1] \times \Omega)$.  

In all the sequel, when the element of integration is not specified, it is assumed tacitly that integration is performed w.r.t. the Lebesgue measure (either the $d$-dimensional Lebesgue measure on $\Omega$ or the $d+1$-dimensional Lebesgue measure on $[0,1] \times \Omega$). Similarly, a measure is said to be absolutely continuous if it is the case w.r.t. the Lebesgue measure.

\subsection{The Wasserstein space}

The space $\Prob(\Omega)$ of probability measures on $\Omega$ is endowed with the Wasserstein distance: if $\mu$ and $\nu$ are two elements of $\Prob(\Omega)$, the $2$-Wasserstein distance $W_2(\mu, \nu)$ between $\mu$ and $\nu$ is defined via
\begin{equation}
\label{equation_definition_Wasserstein_distance}
W_2(\mu, \nu) := \sqrt{ \min \left\{ \int_{\Omega \times \Omega} |x-y|^2 \ddr \gamma(x,y) \ :  \ \gamma \in \Prob(\Omega \times \Omega) \text{ and } \pi_0 \# \gamma = \mu, \ \pi_1 \# \gamma = \nu \right\}  }.
\end{equation} 
In the formula above, $\pi_0$ and $\pi_1 : \Omega \times \Omega \to \Omega$ stand for the projections on respectively the first and second component of $\Omega \times \Omega$. If $T : X \to Y$ is a measurable application and $\mu$ is a measure on $X$, then the image measure of $\mu$ by $T$, denoted by $T \# \mu$, is the measure defined on $Y$ by $(T \# \mu)(B) = \mu(T^{-1}(B))$ for any measurable set $B \subset Y$. For general results about optimal transport, the reader might refer to \cite{Villani2003} or \cite{OTAM}.

In all the sequel, we identify a measure with its density w.r.t. the Lebesgue measure on $\Omega$. Moreover, if $\mu \in \Prob(\Omega)$, we write $\mu \leqslant 1$ if the measure $\mu$ is absolutely continuous and its density is a.e. bounded by $1$. 

The Wasserstein distance admits a dual formulation, the dual variables being the so-called Kantorovich potentials. The main properties of these potentials, in the case which is of interest to us, are summarized in the proposition below. We restrict to the cases where the measures have a strictly positive \review{density} a.e., as in this particular case the potentials are unique (up to a global additive constant). The proof of these results can be found in (but not exclusively) \cite[Chapters 1 and 7]{OTAM}.  

\begin{prop}
\label{proposition_Kantorovicth_potential}
Let $\mu, \nu \in \Prob(\Omega)$ be two absolutely continuous probability measures with strictly positive density. Then there exists a unique (up to adding a constant to $\varphi$ and subtracting it from $\psi$) pair $(\varphi, \psi)$ of Kantorovich potentials satisfying the following properties. 
\begin{enumerate}
\item The squared Wasserstein distance $W_2^2(\mu,\nu)$ can be expressed as
\begin{equation*}
\frac{W_2^2(\mu, \nu)}{2}  = \int_\Omega \varphi \mu + \int_\Omega \psi \nu.
\end{equation*}
\item The "vertical" derivative of $W_2^2(\cdot, \nu)$ at $\mu$ is $\varphi$: if $\tilde{\rho} \in \Prob(\Omega)$ is any probability measure, then 
\begin{equation*}
\lim_{\varepsilon \to 0} \frac{W_2^2((1-\varepsilon)\mu + \varepsilon \tilde{\rho}, \nu) - W_2^2(\mu, \nu)}{2} = \int_\Omega \varphi (\tilde{\rho} - \mu). 
\end{equation*} 
\item The potentials $\varphi$ and $\psi$ are one the $c$-transform of the other, meaning that we have
\begin{equation*}
\begin{cases}
\varphi(x) & = \dst{\inf_{y \in \Omega} \left( \frac{|x-y|^2}{2} - \psi(y)  \right)} \\
\psi(y) & = \dst{\inf_{x \in \Omega} \left( \frac{|x-y|^2}{2} - \varphi(x)  \right)}.
\end{cases}
\end{equation*}
\item There holds $(\Id - \nabla \varphi) \# \mu = \nu$ and the transport plan $\gamma := (\Id, \Id - \nabla \varphi) \# \mu$ is optimal in the problem \eqref{equation_definition_Wasserstein_distance}.
\end{enumerate}
The function $\varphi$ (resp. $\psi$) is called the Kantorovitch potential from $\mu$ to $\nu$ (resp. from $\nu$ to $\mu$).  
\end{prop}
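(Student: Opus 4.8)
This proposition packages several classical facts about quadratic optimal transport, and the plan is to assemble them in four steps.

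\emph{Existence, items 1 and 3.} I would begin from the Kantorovich duality for the cost $c(x,y)=|x-y|^2/2$,
\[
\frac{W_2^2(\mu,\nu)}{2}=\sup\Big\{\int_\Omega\varphi\,\mu+\int_\Omega\psi\,\nu \ : \ \varphi(x)+\psi(y)\le\tfrac{|x-y|^2}{2}\ \forall x,y\Big\},
\]
and restrict the supremum to pairs with $\psi=\varphi^c$ and $\varphi=\psi^c$, where $g^c(z):=\inf_w\big(|z-w|^2/2-g(w)\big)$; these are the $c$-concave potentials, so item 3 holds by construction, and item 1 is just the equality in the duality once a maximizer is found. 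Such potentials are uniformly semiconcave on the bounded set $\Omega$ (the map $x\mapsto|x|^2/2-\varphi(x)$ is convex), hence equi-Lipschitz after normalizing the additive constant; Arzel\`a--Ascoli then produces the maximizing pair $(\varphi,\psi)$.

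\emph{Item 4.} On the support of any optimal plan $\gamma$ in \eqref{equation_definition_Wasserstein_distance} the constraint $\varphi(x)+\psi(y)\le|x-y|^2/2$ is an equality. Since $\varphi$ is semiconcave it is differentiable at $\mu$-a.e.\ $x$ — here the hypothesis $\mu\ll\Leb$ is used — and differentiating $x\mapsto\varphi(x)=|x-y|^2/2-\psi(y)$ at such a point gives $\nabla\varphi(x)=x-y$, i.e.\ $y=x-\nabla\varphi(x)$. Thus $\gamma$ is concentrated on the graph of $T:=\Id-\nabla\varphi$, so $\gamma=(\Id,T)\#\mu$ is optimal and, projecting on the second factor, $(\Id-\nabla\varphi)\#\mu=\nu$; this is Brenier's theorem in the form stated. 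For uniqueness, I would exploit that both densities are strictly positive, so $\mu$ and $\nu$ have full support $\Omega$, which is convex hence connected: given two optimal $c$-concave pairs, Brenier's uniqueness of the optimal plan forces the maps $\Id-\nabla\varphi_i$ to agree $\mu$-a.e., so $\nabla\varphi_1=\nabla\varphi_2$ a.e.\ on $\Omega$, and connectedness yields $\varphi_1=\varphi_2+\mathrm{const}$, while $\psi_i=\varphi_i^c$ propagates the opposite constant to $\psi$.

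\emph{Item 2.} Set $\mu_\varepsilon:=(1-\varepsilon)\mu+\varepsilon\tilde\rho$ and $f(\varepsilon):=W_2^2(\mu_\varepsilon,\nu)/2$. Since the admissibility constraint does not involve the measures, $(\varphi,\psi)$ is admissible for $\mu_\varepsilon$, whence $f(\varepsilon)\ge f(0)+\varepsilon\int_\Omega\varphi(\tilde\rho-\mu)$ and so $\liminf_{\varepsilon\to0}(f(\varepsilon)-f(0))/\varepsilon\ge\int_\Omega\varphi(\tilde\rho-\mu)$. For the reverse bound, let $(\varphi_\varepsilon,\psi_\varepsilon)$ be the normalized optimal pair for $\mu_\varepsilon$; its admissibility for $\mu$ gives $f(\varepsilon)-f(0)\le\varepsilon\int_\Omega\varphi_\varepsilon(\tilde\rho-\mu)$, so it suffices to show $\varphi_\varepsilon\to\varphi$ uniformly as $\varepsilon\to0$. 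This stability statement follows from the uniform semiconcavity bound plus Arzel\`a--Ascoli, since any subsequential limit is an optimal $c$-concave potential for $\mu$ and therefore equals $\varphi$ by the uniqueness step. I expect this last point — keeping the normalizing constants consistent and upgrading weak-$*$ convergence $\mu_\varepsilon\to\mu$ to uniform convergence of the potentials — to be the only genuinely delicate part; an alternative that sidesteps it is to observe that $f$ is convex on $[0,1]$ (interpolate linearly between the optimal plans from $\mu$ and from $\tilde\rho$ to $\nu$), so $f'(0^+)$ exists and is then pinned down by the lower bound alone.
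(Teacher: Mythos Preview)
The paper does not prove this proposition at all; it simply states that ``The proof of these results can be found in (but not exclusively) \cite[Chapters 1 and 7]{OTAM}.'' Your sketch is essentially the standard argument one finds in that reference (duality and $c$-concavity for items 1 and 3, Brenier's theorem for item 4, and stability of Kantorovich potentials --- which is exactly \cite[Theorem 1.52]{OTAM}, cited elsewhere in the paper --- for item 2), so there is no meaningful divergence in approach.

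One small caveat: your final ``alternative'' for item 2 is not complete as written. Convexity of $\varepsilon\mapsto f(\varepsilon)=W_2^2(\mu_\varepsilon,\nu)/2$ does guarantee that $f'(0^+)$ exists, but it does not by itself identify the value; the lower bound $f'(0^+)\ge\int_\Omega\varphi(\tilde\rho-\mu)$ remains only a lower bound. You still need an upper bound, and for that you are back to the stability argument (or an equivalent statement that the subdifferential of the dual supremum at $\varepsilon=0$ is a singleton, which again amounts to uniqueness plus a limiting argument on the near-maximizers). Your primary route via $\varphi_\varepsilon\to\varphi$ is the correct one and matches what the cited reference does; the convexity remark is a helpful observation about existence of the one-sided derivative but should not be presented as a self-contained alternative.
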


\subsection{Absolutely continuous curves in the Wasserstein space}

We will denote by $\Gamma$ the space of continuous curves from $[0,1]$ to $\Prob(\Omega)$.  This space will be equipped with the distance $d_\Gamma$ of the uniform convergence, i.e.
\begin{equation*}
d_\Gamma(\rho^1, \rho^2) := \max_{t \in [0,1]} W_2( \rho^1_t, \rho^2_t ).
\end{equation*} 

\noindent Following \cite[Definition 1.1.1]{Ambrosio2008}, we will introduce the following subset of $\Gamma$. 

\begin{defi}
We say that a curve $\rho \in \Gamma$ is $2$-absolutely continuous if there exists a function $a \in L^2([0,1])$ such that, for every $0 \leqslant t \leqslant s \leqslant 1 $,  
\begin{equation*}
W_2(\rho_t, \rho_s) \leqslant \int_t^s a(r) \ddr r.
\end{equation*}
\end{defi}

\noindent The main interest of this notion lies in the following theorem, which we recall. 

\begin{theo}
If $\rho \in \Gamma$ is a $2$-absolutely continuous curve, then the quantity 
\begin{equation*}
|\dot{\rho}_t| := \lim_{h \to 0} \frac{W_2(\rho_{t+h}, \rho_t)}{h}
\end{equation*}
exists and is finite for a.e. $t$. Moreover, 
\begin{equation}
\label{equation_representation_A_sup}
\int_0^1 |\dot{\rho}_t|^2 \ddr t = \sup_{N \geqslant 2} \ \ \sup_{0 \leqslant t_1 < t_2 < \ldots < t_N \leqslant 1} \ \ \sum_{k=2}^{N} \frac{W_2^2(\rho_{t_{k-1}}, \rho_{t_k})}{t_k - t_{k-1}}.
\end{equation}
\end{theo}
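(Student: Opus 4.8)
The plan is to establish the two parts of the representation formula separately. First I would prove existence and finiteness of the metric derivative $|\dot\rho_t|$ for a.e. $t$. By the $2$-absolute continuity hypothesis there is $a\in L^2([0,1])$ with $W_2(\rho_t,\rho_s)\le\int_t^s a(r)\ddr r$; in particular $t\mapsto\rho_t$ is absolutely continuous as a map into the metric space $(\Prob(\Omega),W_2)$, so the real-valued function $m(t):=\int_0^t a(r)\ddr r$ dominates the increments. The standard trick is to consider, for each fixed $s$, the real map $t\mapsto W_2(\rho_t,\rho_s)$; using the triangle inequality one checks it is $a$-absolutely continuous, hence differentiable a.e. Then one takes a countable dense set $\{\rho_{s_n}\}$ in the (separable, since $\Prob(\Omega)$ with $W_2$ is separable) image, and shows that at every point where all the maps $t\mapsto W_2(\rho_t,\rho_{s_n})$ are differentiable and $t$ is a Lebesgue point of $a$, the limit defining $|\dot\rho_t|$ exists and equals $\sup_n \frac{d}{dt}W_2(\rho_t,\rho_{s_n})$, which is bounded by $a(t)$. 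This is the classical argument of Ambrosio--Gigli--Savaré \cite[Theorem 1.1.2]{Ambrosio2008}, and I would follow it essentially verbatim.

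Next I would prove the identity \eqref{equation_representation_A_sup}. The inequality $\ge$ (i.e. that the sup over partitions bounds $\int_0^1|\dot\rho_t|^2\ddr t$ from above — wait, one needs the right direction): for any partition $0\le t_1<\dots<t_N\le 1$, by the a.e. bound $W_2(\rho_{t_{k-1}},\rho_{t_k})\le\int_{t_{k-1}}^{t_k}|\dot\rho_r|\ddr r$ (which follows once we know $|\dot\rho|$ is an admissible upper gradient, itself a consequence of the first part together with Fatou/Lebesgue point reasoning) and the Cauchy--Schwarz inequality,
\begin{equation*}
\frac{W_2^2(\rho_{t_{k-1}},\rho_{t_k})}{t_k-t_{k-1}}\le\frac{1}{t_k-t_{k-1}}\left(\int_{t_{k-1}}^{t_k}|\dot\rho_r|\ddr r\right)^2\le\int_{t_{k-1}}^{t_k}|\dot\rho_r|^2\ddr r,
\end{equation*}
and summing over $k$ gives $\sum_k\frac{W_2^2(\rho_{t_{k-1}},\rho_{t_k})}{t_k-t_{k-1}}\le\int_0^1|\dot\rho_r|^2\ddr r$, so the supremum is $\le\int_0^1|\dot\rho_t|^2\ddr t$. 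For the reverse inequality $\ge$, I would use that for a.e. $t$ the difference quotients $\frac{W_2(\rho_{t+h},\rho_t)}{h}$ converge to $|\dot\rho_t|$; fixing a fine equispaced partition of mesh $h=1/N$ and passing to the limit as $N\to\infty$, the Riemann sum $\sum_k \frac{W_2^2(\rho_{(k-1)/N},\rho_{k/N})}{1/N}=\sum_k h\left(\frac{W_2(\rho_{(k-1)/N},\rho_{k/N})}{h}\right)^2$ converges (by dominated convergence, using the $L^2$ dominating function $a$) to $\int_0^1|\dot\rho_t|^2\ddr t$. Since the left side is dominated by the supremum in \eqref{equation_representation_A_sup} for every $N$, passing to the limit yields $\int_0^1|\dot\rho_t|^2\ddr t\le\sup(\dots)$, completing the equality.

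The main obstacle I expect is the first part, namely rigorously justifying that $|\dot\rho_t|$ both exists a.e. and is itself an admissible upper gradient, i.e. that $W_2(\rho_t,\rho_s)\le\int_t^s|\dot\rho_r|\ddr r$ — a priori we only know this with $a$ in place of $|\dot\rho|$, and one must show that $|\dot\rho|\le a$ a.e. combined with an a.e. lower bound argument to get the sharp upper gradient. The separability of $W_2(\Prob(\Omega),\cdot)$ (needed to reduce to countably many reference measures) and the measurability of $t\mapsto|\dot\rho_t|$ are the technical points to handle with care. Since this is a classical and well-documented result, I would either cite \cite[Theorem 1.1.2]{Ambrosio2008} directly or reproduce its proof; everything else (Cauchy--Schwarz, dominated convergence on Riemann sums) is routine.
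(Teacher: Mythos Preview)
The paper's proof is purely by citation: \cite[Theorem~1.1.2]{Ambrosio2008} for the existence of the metric derivative, and an adaptation of \cite[Theorem~4.1.6]{Ambrosio2003} for the representation formula. Your treatment of the first part and of the inequality $\sup\le\int_0^1|\dot\rho_t|^2\ddr t$ via Cauchy--Schwarz is correct and standard.

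The gap is in your reverse inequality. You assert that the step function $f_N$ taking the value $N\,W_2(\rho_{(k-1)/N},\rho_{k/N})$ on $[(k-1)/N,k/N)$ converges a.e.\ to $|\dot\rho_t|$, and then invoke dominated convergence. The upper bound $\limsup_N f_N(t)\le|\dot\rho_t|$ does follow from the upper-gradient property and Lebesgue differentiation, but the matching lower bound does not: the endpoints $(k-1)/N$ and $k/N$ straddle $t$, and the triangle inequality gives no useful lower estimate on $W_2(\rho_{(k-1)/N},\rho_{k/N})$ in terms of $|\dot\rho_t|$. (The pointwise claim is in fact true, but it is essentially Kirchheim's metric differentiability theorem, which is not routine.) Note also that the correct dominating function is not $a$ itself but a constant times its Hardy--Littlewood maximal function $Ma\in L^2$. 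The clean fix, which is what underlies \cite[Theorem~4.1.6]{Ambrosio2003}, is to work instead with $g_h(t):=W_2(\rho_t,\rho_{t+h})/h$, which converges a.e.\ to $|\dot\rho_t|$ \emph{by definition}: for each shift $s\in[0,h)$ the partition $\{s+kh\}_k$ has sum at most $S$ (the supremum), and averaging over $s\in[0,h)$ turns this into $\int_0^{1-h}g_h(t)^2\ddr t\le S$; Fatou's lemma then gives $\int_0^1|\dot\rho_t|^2\ddr t\le S$ with no need for domination or metric differentiability.
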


\begin{proof}
The first part is just \cite[Theorem 1.1.2]{Ambrosio2008}. The proof of the representation formula \eqref{equation_representation_A_sup} can easily be obtained by adapting the proof of \cite[Theorem 4.1.6]{Ambrosio2003}.  
\end{proof}

\noindent The quantity $|\dot{\rho}_t|$ is called the metric derivative of the curve $\rho$ and heuristically corresponds to the norm of the derivative of $\rho$ at time $t$ in the metric space $(\Prob(\Omega), W_2)$. The quantity $\int_0^1 |\dot{\rho}_t|^2 \ddr t$ behaves like a $H^1$ norm, see \cite[Proposition 2.7]{LS2017}, but we will not make a use of this.

\subsection{Primal and dual problem}\label{pridual}

To state our main theorem, we do the following assumptions, which will hold throughout the whole article. 

\begin{asmp*}
~
\begin{itemize}
\item[(A1)] The domain $\Omega$ is the closure of an open bounded convex subset of $\R^d$ with Lebesgue measure $|\Omega|>1$.
\item[(A2)] We fix $V \in H^1(\Omega)$ (the "running" cost) and assume that it is positive. 
\item[(A3)] We fix $\Psi \in H^1(\Omega)$ (the "final" cost ) and assume that it is positive. 
\item[(A4)] We take $\bar{\rho}_0 \in \Prob(\Omega)$ (the initial probability measure) such that $\bar{\rho}_0  \leqslant1$. 
\end{itemize}
\end{asmp*}

\noindent We denote by $\Gamma_0 \subset \Gamma$ the set of curves such $\rho \in \Gamma$ that $\rho_0 = \bar{\rho}_0$. As we will see below in the definition of the primal problem, it does not change anything to add a constant to $V$ or $\Psi$, hence (A2) and (A3) are equivalent to ask that $V$ and $\Psi$ are bounded from below. Note that the important assumption on $\Omega$ is its convexity, as already indicated by the formal computation in the introduction. 

The primal objective functional reads 

\begin{equation*}
\A(\rho) := \begin{cases}
\dst{  \int_0^1 \frac{1}{2} |\dot{\rho}_t|^2 \ddr t + \int_0^1 \left( \int_\Omega V  \rho_t  \right) \ddr t + \int_\Omega \Psi \rho_1 } & \text{if } \rho_t \leq 1 \text{ for all } t \in [0,1], \\
+ \infty & \text{else}.
\end{cases}
\end{equation*}

\begin{defi}
\label{definition_primal}
The primal problem is 
\begin{equation}
\label{equation_primal}
\min \left\{ \A(\rho) \ : \ \rho \in \Gamma_0 \right\}    
\end{equation}
\end{defi}

We will need to consider the dual of this problem. Let $\phi \in C^1([0,1] \times \Omega)$ and  $P \in C([0,1] \times \Omega)$ be smooth functions with $p$ positive and in such a way that the Hamilton Jacobi equation is satisfied as an inequality
\begin{equation}
\label{equation_HJ}
- \dr_t \phi + \frac{1}{2} |\nabla \phi|^2 \leqslant V + P
\end{equation}
and the final value of $\phi$ is constrained by
\begin{equation}
\label{equation_contrainte_finale}
\phi(1, \cdot) \leqslant \Psi.     
\end{equation}
The dual functional is defined as follows: 
\begin{equation*}
\B(\phi, P) := \int_\Omega \phi(0, \cdot)  \bar{\rho}_0 - \iint_{[0,1] \times \Omega} P.    
\end{equation*}

\noindent and there is no duality gap between the primal and the dual problem. However, to get existence of a solution of the dual problem, it is too restrictive to look only at smooth functions. As understood in \cite{Cardaliaguet2016}, the right functional space is the following. 

\begin{defi}
\label{definition_dual}
Let $\K$ be the set of pairs $(\phi, P)$ where $\phi \in \BV([0,1] \times \Omega) \cap L^2([0,1], H^1(\Omega))$ and $P \in \M_+([0,1] \times \Omega)$ is a positive measure, and the Hamilton Jacobi equation \eqref{equation_HJ} is understood in the distributional sense, provided we set $\phi(1^+, \cdot) = \Psi$ and that we take in account the possible jump from $\phi(1^-, \cdot)$ to $\phi(1^+, \cdot)$ in the temporal distributional derivative. \review{Specifically, we impose that for any smooth and non-negative function $f$ defined on $[0,1] \times \Omega$, 
\begin{equation*}
\iint_{[0,1] \times \Omega} \dr_t f \phi - \int_\Omega f(1, x)  \Psi(x) \ddr x + \int_\Omega f(0, x) \phi(0^+, x) \ddr x + \iint_{[0,1] \times \Omega} \frac{1}{2} f |\nabla \phi|^2  \leqslant \iint_{[0,1] \times \Omega}  f V  + \iint_{[0,1] \times \Omega} f \ddr P. 
\end{equation*}} 

For $(\phi, P) \in \K$, the dual functional is understood in the following sense:  
\begin{equation*}
\B(\phi, P) := \int_\Omega \phi(0^+, \cdot)  \bar{\rho}_0 - P([0,1] \times \Omega).    
\end{equation*}
\end{defi} 



\noindent Notice, in the definition of solutions of the Hamilton Jacobi equation we assume $\phi(0^-, \cdot) = \phi(0^+, \cdot)$ (no jump for $t=0$) but we set $\phi(1^+, \cdot) = \Psi$. The measure $P$ can have a part concentrated on $t=1$, which may lead to $\phi(1^-, \cdot) > \phi(1^+, \cdot) = \Psi$, provided the jump is compensated by the part of $P$ on $\{ 1 \} \times \Omega$. 

\begin{defi}
The dual problem is 
\begin{equation}
\label{equation_dual}
\max \left\{ \B(\phi, P) \ : \ (\phi, P) \in \K \right\}.    
\end{equation}
\end{defi}

\noindent These two problems are in duality in the following sense \cite[Propositions 3.3 and 3.8]{Cardaliaguet2016}. 

\begin{theo}
\label{theorem_duality}
There holds
\begin{equation*}
\min \left\{ \A(\rho) \ : \ \rho \in \Gamma_0 \right\}  = \max \left\{ \B(\phi, P) \ : \ (\phi, P) \in \K \right\}.  
\end{equation*}
\end{theo}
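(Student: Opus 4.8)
The plan is to obtain both the equality and the attainment of the two extrema by convex duality of Fenchel--Rockafellar type, after putting the primal into a standard momentum (Benamou--Brenier) form. The first move is to rewrite \eqref{equation_primal}: by the Benamou--Brenier formula (see \cite{OTAM}),
\begin{equation*}
\frac{1}{2}\int_0^1 |\dot{\rho}_t|^2 \ddr t = \min\left\{ \iint_{[0,1]\times\Omega} \frac{|E|^2}{2\rho} \ : \ \dr_t \rho + \nabla\cdot E = 0,\ \rho_0 = \bar{\rho}_0 \right\},
\end{equation*}
so that the primal becomes a convex minimization over a pair of measures $(\rho,E)$ on $[0,1]\times\Omega$, with convex integrand, the convex constraint $\rho\leqslant 1$, the linear terms $\iint V\rho + \int_\Omega \Psi\rho_1$, and the linear continuity-equation constraint. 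In the associated Lagrangian the dual variable of the continuity equation is a function $\phi$ (paired with $\dr_t\phi$ and $\nabla\phi$), the dual variable of $\rho\leqslant 1$ is a positive measure $P$ — which is exactly the pressure — the Legendre transform of the quadratic kinetic term produces the term $\frac{1}{2}|\nabla\phi|^2$, and the free value of $\rho_1$ together with the linear cost $\int_\Omega \Psi\rho_1$ forces $\phi(1,\cdot)\leqslant\Psi$. One thus recovers precisely the constraints \eqref{equation_HJ}--\eqref{equation_contrainte_finale} and the functional $\B$, and the content of the theorem is that there is no gap and both problems are solved.

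Weak duality, $\min\A \geqslant \sup\B$, is the elementary direction. Given an admissible $\rho$ with velocity field $v$ and a smooth admissible $(\phi,P)$, one tests the continuity equation against $\phi$, uses Young's inequality $\frac{1}{2}|v|^2 \geqslant v\cdot\nabla\phi - \frac{1}{2}|\nabla\phi|^2$, the Hamilton--Jacobi inequality $-\dr_t\phi \geqslant V + P - \frac{1}{2}|\nabla\phi|^2$, and the sign conditions $P\geqslant 0$, $\rho\leqslant 1$, $\phi(1,\cdot)\leqslant\Psi$, which together give $\A(\rho)\geqslant\B(\phi,P)$; the extension to general $(\phi,P)\in\K$ is carried out through the distributional formulation in Definition \ref{definition_dual}, keeping track of the jump of $\phi$ at $t=1$. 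Existence for the primal is by the direct method: the constant curve $t\mapsto\bar{\rho}_0$ is admissible (since $\bar{\rho}_0\leqslant 1$) and has finite energy (since $V,\Psi\in H^1(\Omega)\subset L^1(\Omega)$), so $\inf\A<+\infty$; a minimizing sequence has $\int_0^1|\dot\rho^n_t|^2\ddr t$ bounded, hence is equicontinuous (indeed $\frac{1}{2}$-Hölder) in $(\Gamma,d_\Gamma)$, and since $(\Prob(\Omega),W_2)$ is compact, Arzelà--Ascoli gives a subsequential uniform limit; the constraint $\rho_t\leqslant 1$ is stable under weak-$*$ convergence, the linear terms are weak-$*$ continuous, and the kinetic term is lower semicontinuous along uniformly convergent curves via \eqref{equation_representation_A_sup}, so the limit is a minimizer.

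To upgrade weak duality to an equality with attainment of the dual, I would apply a Fenchel--Rockafellar theorem to the convex problem above. The two ingredients are lower semicontinuity and properness of the primal functional, which follow from the previous step, and a constraint qualification. For the latter one exhibits a point where the dual constraints hold strictly: $\phi\equiv -C$ with $C$ large and $P\equiv C'$ with $C' > \sup V$ make \eqref{equation_HJ} strict and \eqref{equation_contrainte_finale} strict, which is exactly the interior-point condition needed to exclude a gap and to get a dual maximizer. A more robust variant is to perturb the constraint to $\rho\leqslant 1+\delta$, check (using $|\Omega|>1$ and the compactness above) that the value of the perturbed primal is finite and continuous in $\delta$ near $0$, and read off the non-degeneracy of the dual from the subdifferential of this value function.

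Finally one must check that the abstract dual, a priori posed over a large space of distributions and measures, is attained inside $\K$, i.e. that one may take $\phi\in\BV([0,1]\times\Omega)\cap L^2([0,1],H^1(\Omega))$ and $P\in\M_+([0,1]\times\Omega)$. Along a maximizing sequence $(\phi_n,P_n)$, after normalizing $\phi_n$ (for instance truncating at $\Psi$ at the final time and propagating a lower bound backwards through the Hamilton--Jacobi inequality) one extracts uniform bounds on the mass $P_n([0,1]\times\Omega)$, on $\iint|\nabla\phi_n|^2$, and on $\|\dr_t\phi_n\|_{\M}$; up to a subsequence $\phi_n\rightharpoonup\bar\phi$ in $L^2H^1$ and weakly-$*$ in $\BV$, $P_n\rightharpoonup\bar P$ weakly-$*$, and one passes to the limit in the distributional Hamilton--Jacobi inequality, the key point being that the nonlinear term $\frac{1}{2}|\nabla\phi|^2$ is convex, hence weakly lower semicontinuous — the right direction for an inequality constraint — while $\B$ is upper semicontinuous along the sequence, the trace term $\int_\Omega\phi_n(0^+,\cdot)\bar\rho_0$ being handled with the $\BV$ bound. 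This yields $(\bar\phi,\bar P)\in\K$ with $\B(\bar\phi,\bar P)=\min\A$. The step I expect to be the main obstacle is precisely this last one: obtaining the simultaneous a priori compactness estimates on the dual maximizing sequence (controlling $\|\phi_n\|_{L^2H^1}$, the $\BV$ norm, and the mass of $P_n$ after a good normalization) and justifying the limit in the nonlinear Hamilton--Jacobi constraint together with the semicontinuity of the initial trace term; the constraint qualification of the previous paragraph is the other delicate point, but is comparatively standard given $|\Omega|>1$.
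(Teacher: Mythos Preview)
The paper does not prove this theorem: it merely quotes \cite[Propositions 3.3 and 3.8]{Cardaliaguet2016}. Your sketch is essentially the strategy followed in that reference --- recast the primal in Benamou--Brenier momentum form, obtain the no-gap statement via Fenchel--Rockafellar duality applied to the smooth dual, and then establish existence of a relaxed dual maximizer in $\K$ by compactness on a maximizing sequence --- so in spirit you are aligned with the intended proof.

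Two points where your outline is imprecise. First, the role of the constraint qualification is slightly muddled: in the Fenchel--Rockafellar setup of \cite{Cardaliaguet2016} one takes the \emph{smooth dual} as the starting problem, and exhibiting a smooth $(\phi,P)$ with the Hamilton--Jacobi inequality strict is the qualification that delivers the no-gap equality and attainment of the \emph{primal} minimum (which you have already obtained by the direct method anyway). Dual attainment in $\K$ is not a consequence of this qualification; it is a genuinely separate step, handled by the compactness argument you give at the end. Your perturbation variant $\rho\leqslant 1+\delta$ is a legitimate alternative route to no-gap, but is not what \cite{Cardaliaguet2016} does.

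Second, there is a step you omit entirely: one must check that the supremum of $\B$ over smooth pairs equals the supremum over $\K$, i.e.\ that relaxing the dual does not raise its value. This requires approximating an arbitrary $(\phi,P)\in\K$ by smooth admissible pairs without decreasing $\B$ in the limit, and in \cite{Cardaliaguet2016} this is a nontrivial regularization argument. Without it, your compactness argument only shows that the relaxed dual is attained, not that its value coincides with $\min\A$. You are right that the trace term $\int_\Omega\phi_n(0^+,\cdot)\bar\rho_0$ is the delicate point in the compactness step, since $\BV$ traces are not weak-$*$ continuous; the one-sided control coming from the Hamilton--Jacobi inequality is what saves it.
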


\noindent Notice that the existence of a solution to both the primal and the dual problem are included in this statement. The main result of this paper is the following: 

\begin{theo}
\label{theorem_main}
There exists a solution $(\bar{\phi}, \bar{P})$ of the dual problem such that: 
\begin{itemize}
\item[•] The restriction of $\bar{P}$ to $[0,1) \times \Omega$ has a density w.r.t. to the $d+1$-dimensional Lebesgue measure and this density, denoted by $\bar{p}$; satisfies $\| \nabla \bar{p}(t, \cdot) \|_{L^2(\Omega)} \leqslant \| \nabla V \|_{L^2(\Omega)}$ for a.e. $t \in [0,1]$. Moreover, if $V \in W^{1,q}(\Omega)$ with $q > d$, then $\| \bar{p} \|_{L^{\infty}([0,1) \times \Omega)} \leqslant C < + \infty$ with $C$ depending only on $\| \nabla V \|_{L^q(\Omega)}$ and $\Omega$. 
\item[•] The restriction of $\bar{P}$ to $\{ 1 \} \times \Omega$ has a density w.r.t. to the $d$-dimensional Lebesgue measure and this density (denoted by $\bar{P}_1$) satisfies $\| \nabla \bar{P}_1\|_{L^2(\Omega)} \leqslant \| \nabla \Psi \|_{L^2(\Omega)}$. Moreover, if $\Psi \in W^{1,q}(\Omega)$ with $q > d$, then $\| \bar{P}_1 \|_{L^{\infty}(\Omega)} \leqslant C < + \infty$ with $C$ depending only on $\| \nabla \Psi \|_{L^q(\Omega)}$ and $\Omega$. 
\end{itemize}
\end{theo}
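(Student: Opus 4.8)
The strategy, inspired by the time-discretization method of \cite{LS2017} (though the inequalities involved differ), is to discretize the primal problem \eqref{equation_primal} in time, prove at each time step the discrete analogue of the estimate \eqref{equation_fundamental_heuristic} using optimal transport, and then pass to the limit. Fix $N \in \N$, set $\tau = 1/N$, $t_k = k\tau$, and consider
\begin{equation*}
\min\left\{ \sum_{k=1}^N \frac{W_2^2(\rho_{k-1}, \rho_k)}{2\tau} + \tau \sum_{k=1}^N \int_\Omega V \rho_k + \int_\Omega \Psi \rho_N \ : \ \rho_0 = \bar\rho_0,\ \rho_k \in \Prob(\Omega),\ \rho_k \leqslant 1 \right\}.
\end{equation*}
A minimizer $(\rho_k)_{k=0}^N$ exists by the direct method (lower semicontinuity of $W_2^2$, weak-$*$ closedness of $\{\mu \leqslant 1\}$), and since the functional is convex along linear interpolations — recall that $\mu \mapsto W_2^2(\mu,\nu)$ is convex for exactly that reason — the first-order condition is necessary and sufficient. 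Perturbing $\rho_k$ towards an admissible competitor and using the vertical-derivative formula of Proposition \ref{proposition_Kantorovicth_potential} shows that for $1 \leqslant k \leqslant N-1$ the density $\rho_k$ minimizes $\rho \mapsto \int_\Omega \big(\tfrac{1}{\tau}(\varphi_k^- + \varphi_k^+) + \tau V\big)\rho$ among $\rho \leqslant 1$, where $\varphi_k^-$ (resp. $\varphi_k^+$) is a Kantorovich potential from $\rho_k$ to $\rho_{k-1}$ (resp. to $\rho_{k+1}$); by the bathtub principle there is $c_k \in \R$ such that $p_k := \big(c_k - \tfrac{1}{\tau}(\varphi_k^- + \varphi_k^+) - \tau V\big)_+ \in H^1(\Omega)$ satisfies $p_k \geqslant 0$, $\{p_k > 0\} \subset \{\rho_k = 1\}$, and $\tfrac{1}{\tau}(\varphi_k^- + \varphi_k^+) + \tau V + p_k = c_k$ on $\{\rho_k = 1\}$. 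At the last step the analogous relation involves a single potential, $\tfrac{1}{\tau}\varphi_N^- + \tau V + \Psi + P_N = c_N$ on $\{\rho_N = 1\}$, which is exactly the optimality condition of a JKO step for the energy $\rho \mapsto \int_\Omega (\tau V + \Psi)\rho$ under the constraint $\rho \leqslant 1$.

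The heart of the matter is the one-step estimate. Each $\varphi_k^\pm$ is $c$-concave for the quadratic cost, hence semiconcave, so $I - D^2_A\varphi_k^\pm \geqslant 0$ a.e.\ in the sense of the Alexandrov Hessian, and the optimal map $\Id - \nabla\varphi_k^-$ pushes $\rho_k$ onto $\rho_{k-1}$, whence the Monge–Ampère relation $\det(I - D^2_A\varphi_k^-)\,\big(\rho_{k-1}\circ(\Id - \nabla\varphi_k^-)\big) = \rho_k$ a.e. On $\{\rho_k = 1\}$ the left density equals $1$ while $\rho_{k-1}\circ(\cdot) \leqslant 1$, so $\det(I - D^2_A\varphi_k^-) \geqslant 1$ there; as the eigenvalues of $I - D^2_A\varphi_k^-$ are nonnegative with product $\geqslant 1$, the arithmetic–geometric mean inequality forces $\mathrm{tr}\,D^2_A\varphi_k^- \leqslant 0$ a.e.\ on $\{\rho_k = 1\}$, and likewise for $\varphi_k^+$. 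Using $\nabla p_k = 0$ a.e.\ on $\{p_k = 0\}$ and the optimality relation on $\{p_k > 0\}\subset\{\rho_k = 1\}$ we get $\nabla(p_k + \tau V) = -\tfrac{1}{\tau}\nabla(\varphi_k^- + \varphi_k^+)$ there, hence
\begin{equation*}
\int_\Omega \nabla p_k \cdot \nabla(p_k + \tau V) = -\frac{1}{\tau} \int_\Omega \nabla p_k \cdot \nabla(\varphi_k^- + \varphi_k^+).
\end{equation*}
Integrating by parts the right-hand side — the delicate point, where one splits $\Delta\varphi_k^\pm$ into its absolutely continuous part (which is $\leqslant 0$ where $p_k > 0$, by the trace bound above) and its singular part (which is $\leqslant 0$ by semiconcavity, and is integrated against $p_k \geqslant 0$), exactly as in \cite{LS2017} — gives a bulk term with the favourable sign. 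The boundary term $-\tfrac{1}{\tau}\int_{\dr\Omega} p_k\,\dr_n(\varphi_k^- + \varphi_k^+)$ is $\leqslant 0$ by convexity of $\Omega$: for $x \in \dr\Omega$ the displacement $-\nabla\varphi_k^\pm(x) = (\Id - \nabla\varphi_k^\pm)(x) - x$ points into the convex set $\Omega$, so $\dr_n\varphi_k^\pm \geqslant 0$ on $\dr\Omega$ — this is the rigorous counterpart of the remark $\nabla(p+V)\cdot\nO \leqslant 0$ below \eqref{equation_fundamental_heuristic}. Altogether $\int_\Omega \nabla p_k \cdot \nabla(p_k + \tau V) \leqslant 0$, so $\|\nabla p_k\|_{L^2(\Omega)} \leqslant \tau\|\nabla V\|_{L^2(\Omega)}$ by Cauchy–Schwarz, while the identical computation at the last step yields $\|\nabla P_N\|_{L^2(\Omega)} \leqslant \|\nabla(\tau V + \Psi)\|_{L^2(\Omega)}$. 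Testing instead against $p_k^m$ for $m > 1$ and feeding the resulting family of inequalities into a standard Moser iteration together with the Sobolev embedding (valid once $\nabla V \in L^q$, $q > d$) upgrades these to $L^\infty$ bounds on $p_k$, resp.\ $P_N$, uniform in $k$ and $N$.

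It remains to pass to the limit $N \to \infty$. The (geodesic or piecewise-constant) time interpolations of $(\rho_k)$ converge, up to a subsequence, to a minimizer $\bar\rho$ of \eqref{equation_primal}, and the associated discrete dual objects assemble into a candidate $(\bar\phi, \bar P)$ — the $\varphi_k^\pm$ into $\bar\phi \in \BV([0,1]\times\Omega)\cap L^2([0,1],H^1(\Omega))$ solving \eqref{equation_HJ} distributionally with the prescribed jump at $t=1$, and the rescaled discrete pressures ($p_k/\tau$ on the slab $[t_{k-1},t_k)$, the scaling by $\tau$ being forced by the form of $\B$) together with $P_N$ into a positive measure $\bar P$ — which is a maximizer of the dual problem by the absence of duality gap (Theorem \ref{theorem_duality}). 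The uniform bound $\|\nabla(p_k/\tau)\|_{L^2} \leqslant \|\nabla V\|_{L^2}$ then shows that the restriction of $\bar P$ to $[0,1)\times\Omega$ is absolutely continuous with density $\bar p \in L^\infty((0,1);H^1(\Omega))$ satisfying $\|\nabla\bar p(t,\cdot)\|_{L^2} \leqslant \|\nabla V\|_{L^2}$ for a.e.\ $t$, and that $P_N \to \bar P_1$ in $H^1(\Omega)$ with $\|\nabla\bar P_1\|_{L^2} \leqslant \|\nabla\Psi\|_{L^2}$ (the $\tau V$ contribution vanishing in the limit), while the $L^\infty$ bounds pass to the limit verbatim. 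The step I expect to be the main obstacle is this limit passage: precisely the identification of the limit of the discrete dual schemes with a genuine element of $\K$, in particular the correct capture of the singular part of $\bar P$ on $\{1\}\times\Omega$ and the reconstruction of $\bar\phi$ from the broken potentials $\varphi_k^\pm$; the one-step estimate, though it is the conceptual core, stays close to the pattern of \cite{LS2017}.
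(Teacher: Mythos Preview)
Your strategy matches the paper's---time-discretize, extract the one-step inequality from Monge--Amp\`ere, Moser-iterate, pass to the limit---and you have correctly isolated the key mechanism ($\det(I - D^2\varphi) \geq 1$ on the saturated set forcing $\mathrm{tr}\,D^2\varphi \leq 0$). But the technical route differs in one decisive respect, and as written your integration by parts has a gap. The paper does \emph{not} work with Alexandrov Hessians and the distributional Laplacian of the semiconcave potentials. Instead it (i) regularizes $V,\Psi$ to Lipschitz $V_N,\Psi_N$ and (ii) adds an entropic penalization $\lambda_N H(\rho)$ to the discrete functional. The penalization forces each $\bar\rho_{k\tau}$ to be strictly positive; combined with the Lipschitz regularization this makes all the discrete densities Lipschitz and bounded below, so Caffarelli's regularity theory yields $\varphi_k^\pm \in C^{2,\alpha}(\mathring\Omega)\cap C^{1,\alpha}(\Omega)$ (Lemma \ref{lemma_regularity_KP}). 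With classical second derivatives the Monge--Amp\`ere identity holds pointwise and the integration by parts $\int p^m\Delta\varphi = \int_{\partial\Omega} p^m\partial_n\varphi - \int \nabla(p^m)\cdot\nabla\varphi$ is elementary. In your setup $p_k = (c_k - \tfrac{1}{\tau}(\varphi^-+\varphi^+) - \tau V)_+$ is only $H^1$ (since $V$ is only $H^1$), while $\Delta\varphi_k^\pm$ is a signed measure with a possibly nontrivial singular part; the pairing $\langle \Delta\varphi, p_k\rangle$ is not well-defined, and splitting into absolutely continuous and singular parts does not help unless $p_k$ is at least continuous. The same issue bites in the Moser step: with $p_k$ merely $H^1$, $p_k^m$ is not a priori an admissible test function; in the paper $p_k$ is Lipschitz, so this is automatic (cf.\ the hypotheses of Lemma \ref{lemma_Moser_iterations}). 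Adding the two regularizations would close your argument and bring it in line with the paper's; the entropic term is then discarded at the end via the orthogonality $\nabla p \cdot \nabla\ln\bar\rho = 0$.

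On the limit passage, which you rightly flag as the hard part, the paper is more explicit than your sketch: $\phi^N$ is \emph{not} assembled from the $\varphi_k^\pm$ directly but defined backwards by the Hopf--Lax formula on each open subinterval (solving $-\partial_t\phi + \tfrac12|\nabla\phi|^2 = 0$ there) with prescribed downward jumps $\tau(V_N + p_{k\tau}^N)$ at the nodes and a final jump $p_1^N$ at $t=1$ (Definition \ref{definition_value_function_approximate}). An induction (Lemma \ref{lemma_link_value_potential}) compares $\phi^N$ with the Kantorovich potentials and yields the discrete duality inequality \eqref{equation_duality_discrete}. Compactness of $(\phi^N,P^N)$ in $\BV\cap L^2([0,1],H^1(\Omega)) \times \M_+$ together with the absence of duality gap (Theorem \ref{theorem_duality}) then forces the limit $(\bar\phi,\bar P)$ to be optimal, after which the uniform discrete bounds of Corollaries \ref{corollary_bound_discrete} and \ref{corollary_bound_discrete_boundary} pass to $\bar p$ and $\bar P_1$ by testing against smooth functions.
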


\noindent As already understood in \cite[Section 5]{Cardaliaguet2016}, there are situations where the pressure is concentrated on $\{ 1 \} \times \Omega$: one cannot expect $\bar{P}$ to have a density with respect to the Lebesgue measure on the closed interval $[0,1]$. Nevertheless we prove in our theorem that the part of the pressure concentrated on $\{ 1 \} \times \Omega$ has spatial regularity, namely $H^1(\Omega)$ and even $L^\infty(\Omega)$ if $\Psi \in W^{1,q}(\Omega)$ with $q > d$. The rest of this paper is devoted to the proof of this theorem, the wrapping of the arguments being located at page \pageref{page_proof_theorem}. 

\subsection{The discrete problem}

To tackle this problem and make rigorous the estimate given in the introduction, we will approximate the primal problem in the following way: 
\begin{itemize}
\item We introduce a time-discretization. The integer $N+1$ denotes the number of time steps. The time step will be denoted by $\tau$ and we use the approximation 
\begin{equation*}
\int_0^1 \frac{1}{2} |\dot{\rho}_t|^2 \ddr t \simeq \sum_{k=0}^{N-1} \frac{W_2^2(\rho_{k \tau}, \rho_{(k+1) \tau})}{2 \tau}.     
\end{equation*}
    
\item We add an infinitesimal entropic penalization. The goal is to make sure that the density of the minimizers of the discrete problem will be bounded from below, which is necessary when we want to write the optimality conditions.

\item For technical reasons, we need to regularize $V$ and $\Psi$. We take $(V_N)_{N \in \N}$ a sequence which converges to $V$ in $H^1(\Omega)$ and such that $V_N$ is Lipschitz for any $N \geqslant 1$. We can assume moreover that $\| \nabla V_N \|_{L^2(\Omega)} \leqslant \| \nabla V \|_{L^2(\Omega)}$ and $V_N$ is positive. Similarly, we take a sequence $\Psi_N$ going to $\Psi$ in $H^1(\Omega)$ satisfying analogous properties. 
\end{itemize}

The entropic penalization will be realized with the help of the following functional: for any $\rho \in \Prob(\Omega)$, we set 
\begin{equation*}
H(\rho) := \begin{cases}
\dst{\int_\Omega \ln(\rho) \rho} & \text{if } \rho \text{ is absolutely continuous,} \\
+ \infty & \text{else}.
\end{cases}
\end{equation*}
It is known \cite[Chapter 7]{OTAM} that $H$ is lower semi-continuous on $\Prob(\Omega)$. Moreover, a simple application of Jensen's inequality yields
\begin{equation*}
- \ln(|\Omega|) \leqslant H(\rho) \leqslant 0
\end{equation*}
as soon as $\rho \leqslant 1$.

To define the discrete problem, we take $N \geq 1$ and denote by
\begin{equation*}
\Gamma_0^N := \{ (\rho_{k \tau})_{k \in \{0,1, \ldots, N \}} \ : \  \rho_{k \tau} \in \Prob(\Omega) \text{ and } \rho_0 = \bar{\rho}_0  \} \subset (\Prob(\Omega))^{N+1}   
\end{equation*}
the set of discrete curves starting from $\bar{\rho}_0$. We denote by $\tau := 1/N$ the time step. We choose $(\lambda_N)_{N \in \N}$ which goes to $0$ while being strictly positive, it will account for the scale of the entropic penalization. The speed at which $\lambda_N\to 0$ is irrelevant for the analysis, hence we do not need to specify it. The discrete functional $\A^{N}$ is defined on $\Gamma_0^N$ as 
\begin{align*}
&\A^{N}(\rho) := \\
& \begin{cases} 
\dst{\sum_{k=0}^{N-1} \frac{W_2^2(\rho_{k \tau}, \rho_{(k+1) \tau})}{2 \tau} + \sum_{k=1}^{N-1} \tau \left( \int_\Omega V_N \rho_{k \tau} + \lambda_N H(\rho_{k \tau})  \right) + \int_\Omega \Psi_N  \rho_{1} + \lambda_N H(\rho_1) } & \text{if } \rho_{k \tau} \leq 1 \text{ for all } k \in \{ 0,1, \ldots, N \}, \\ 
+ \infty & \text{else}.
\end{cases}
\end{align*}
The discrete problem reads as 
\begin{equation}
\label{equation_discrete_problem}
\min \left\{ \A^{N}(\rho) \ : \ \rho \in \Gamma_0^N \right\}.
\end{equation}

\begin{prop}
For any $N \geqslant 1$, there exists a unique solution to the discrete problem.
\end{prop}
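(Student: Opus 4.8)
The plan is to run the direct method of the calculus of variations. First, $\A^N$ is proper: the constant discrete curve $\rho_{k\tau}=\bar{\rho}_0$ for every $k$ lies in $\Gamma_0^N$, satisfies $\rho_{k\tau}\le 1$ by (A4), and has finite energy, since all Wasserstein terms vanish, $V_N$ and $\Psi_N$ are bounded (being Lipschitz on the compact set $\Omega$), and $H(\bar{\rho}_0)\in[-\ln|\Omega|,0]$ because $\bar{\rho}_0\le 1$. Second, $(\Prob(\Omega))^{N+1}$ is compact for the product weak-$*$ topology since $\Omega$ is compact, and $\A^N$ is weak-$*$ lower semicontinuous on it (with the $+\infty$ convention for inadmissible curves): the constraint $\rho_0=\bar{\rho}_0$ is closed; the set $\{\mu\in\Prob(\Omega):\mu\le 1\}$ is weak-$*$ closed, because $\mu\le 1$ is equivalent to $\mu(A)\le |A|$ for every open $A\subset\Omega$, a condition stable under weak-$*$ convergence by the portmanteau lemma; each map $(\mu,\nu)\mapsto W_2^2(\mu,\nu)$ is continuous (indeed $W_2$ metrizes weak-$*$ convergence on $\Prob(\Omega)$); each $\mu\mapsto\int_\Omega V_N\mu$ and $\mu\mapsto\int_\Omega\Psi_N\mu$ is continuous because $V_N,\Psi_N\in C(\Omega)$; and $H$ is lower semicontinuous on $\Prob(\Omega)$, as recalled above. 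Taking a minimizing sequence, extracting a weak-$*$ convergent subsequence and passing to the limit yields a minimizer.

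\textbf{Uniqueness.} The admissible set is convex, being the intersection of the affine constraint $\{\rho_0=\bar{\rho}_0\}$ with the convex sets $\{\rho_{k\tau}\le 1\}$. Along the componentwise linear interpolation $\rho^s:=(1-s)\rho+s\rho'$ of two admissible curves, each term $W_2^2(\rho^s_{k\tau},\rho^s_{(k+1)\tau})$ is convex in $s$ (push forward a convex combination of optimal plans), each term $\int_\Omega V_N\rho^s_{k\tau}$ and $\int_\Omega\Psi_N\rho^s_1$ is affine in $s$, and each entropy term $H(\rho^s_{k\tau})$ is convex in $s$ — and strictly convex whenever $\rho_{k\tau}\ne\rho'_{k\tau}$, since $t\mapsto t\ln t$ is strictly convex. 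Now suppose $\rho$ and $\rho'$ are two distinct minimizers: both have finite energy, so $\rho_{k\tau},\rho'_{k\tau}\le 1$ and all the values $H(\rho_{k\tau}),H(\rho'_{k\tau})$ are finite; picking $k_0\in\{1,\dots,N\}$ with $\rho_{k_0\tau}\ne\rho'_{k_0\tau}$, the corresponding entropy term (weighted by $\lambda_N\tau>0$, or $\lambda_N>0$ if $k_0=N$) is strictly convex along $\rho^s$, while every other term is convex, so $\A^N(\rho^{1/2})<\tfrac12\bigl(\A^N(\rho)+\A^N(\rho')\bigr)$, contradicting minimality. Hence the minimizer is unique.

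I expect this to be entirely routine; the only point worth emphasizing is that uniqueness genuinely relies on the entropic penalization $\lambda_N H$, since $W_2^2(\cdot,\nu)$ is convex but not strictly convex — this is precisely why the entropy term was inserted in the discrete functional. The mild technical facts to keep in mind are the weak-$*$ closedness of $\{\mu\le 1\}$ and the lower semicontinuity of $H$.
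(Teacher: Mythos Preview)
Your proof is correct and follows exactly the same approach as the paper's: existence via the direct method (compactness of $\Gamma_0^N$, lower semicontinuity of $\A^N$, and the constant curve $\bar\rho_0$ as a feasible point with finite energy), and uniqueness via the strict convexity of the entropy together with $\lambda_N>0$. You have simply spelled out in detail what the paper states in two sentences.
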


\begin{proof}
The functional $\A^N$ is l.s.c. on $\Gamma^N_0$. Moreover, the curve $\rho$ which is constant and equal to $\bar{\rho}_0$ belongs to $\Gamma^N_0$ and is such that $\A^N(\rho) < + \infty$. As $\Gamma^N_0$ is compact (for the topology of the weak convergence of measures), the direct method of calculus of variations ensures the existence of a minimizer. 

Uniqueness clearly holds as $\lambda_N > 0$ and the entropy is a strictly convex function on $\Prob(\Omega)$.  
\end{proof}

\noindent From now on, for any $N \geqslant 1$, we fix $\bar{\rho}^{N}$ the unique solution of the discrete problem

\section{Estimates on the discrete problem}

Let us comment on a technical refinement: for some computations to be valid, we will need to assume that $\bar{\rho}_0$ is smooth is strictly positive. If it is not the case, it is easy to approximate (for fixed $N$) the measure $\bar{\rho}_0$ with a sequence $\bar{\rho}_0^{(n)}$ of smooth densities. For such a $\bar{\rho}_0^{(n)}$, the estimates obtained below for a given $N$ (Corollary \ref{corollary_bound_discrete}) do not depend on $n$. Hence it is easy to send $n$ to $+ \infty$, using the stability of the Kantorovich potentials \cite[Theorem 1.52]{OTAM} to see that these estimates are still satisfied for the solution of the discrete problem with initial condition $\bar{\rho}_0$. In short: we will do as if our initial condition $\bar{\rho}_0$ were smooth, and as long as the final estimates do not depend on the smoothness of $\bar{\rho}_0$ this will be legitimate.   

\subsection{Interior regularity}

We begin with the interior regularity. In this subsection, we fix $N \geq 1$ and $k \in \{ 1,2, \ldots, N-1 \}$ a given instant in time. We use the shortcut $\bar{\rho} := \bar{\rho}^{N}_{k \tau}$ and we also denote $\mu := \bar{\rho}^{N}_{(k-1) \tau}$ and $\nu := \bar{\rho}^{N}_{(k+1) \tau}$. As $\bar{\rho}^{N}$ is a solution of the discrete problem, we know that $\bar{\rho}$ is a minimizer, among all probability measures with density bounded by $1$, of 
\begin{equation*}
\rho \mapsto \frac{W_2^2(\mu, \rho) + W_2^2(\rho, \nu)}{2 \tau} + \tau \left( \int_\Omega V_N  \rho + \lambda_N H(\rho) \right).
\end{equation*} 

\begin{lm}
\label{lemma_positivity_rho}
The density $\bar{\rho}$ is strictly positive a.e.
\end{lm}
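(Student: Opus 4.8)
The plan is to argue by contradiction. Suppose the set $A := \{\bar\rho = 0\}$ has positive Lebesgue measure. Recall that $\bar\rho$ minimizes, among probability measures with density bounded by $1$, the functional
\[
J(\rho) := \frac{W_2^2(\mu,\rho) + W_2^2(\rho,\nu)}{2\tau} + \tau\left(\int_\Omega V_N \rho + \lambda_N H(\rho)\right),
\]
and that $J(\bar\rho)$ is finite, so in particular $\bar\rho$ is absolutely continuous with density at most $1$ and $H(\bar\rho)$ is finite (indeed $-\ln(|\Omega|) \leqslant H(\bar\rho) \leqslant 0$). I would exhibit a competitor that strictly decreases $J$, contradicting minimality.

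For $\varepsilon \in (0,1)$ small I would take $\rho_\varepsilon := (1-\varepsilon)\bar\rho + \varepsilon m$, where $m$ is the uniform probability measure on $\Omega$, of constant density $c_0 := 1/|\Omega|$. Since $|\Omega| > 1$ we have $c_0 < 1$, hence $\rho_\varepsilon$ is a probability measure with density at most $(1-\varepsilon) + \varepsilon c_0 < 1$, so it is an admissible competitor. The role of $\varepsilon m$ is to deposit density $\varepsilon c_0$ on $A$: the entropy integrand there drops by $\varepsilon c_0 \ln(\varepsilon c_0) \sim c_0\,\varepsilon\ln\varepsilon$, a gain with infinite slope at $\varepsilon = 0^+$, which we will play against the merely linear cost of the remaining terms.

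Concretely, I would estimate $J(\rho_\varepsilon) - J(\bar\rho)$ term by term, tracking the order in $\varepsilon$. For the Wasserstein terms I would invoke the convexity of $\sigma \mapsto W_2^2(\mu, \sigma)$ along linear interpolations of measures (take the convex combination of the optimal plans for $(\mu,\bar\rho)$ and $(\mu,m)$; it still has $\mu$ as first marginal), giving $W_2^2(\mu,\rho_\varepsilon) - W_2^2(\mu,\bar\rho) \leqslant \varepsilon W_2^2(\mu,m) \leqslant \varepsilon\,\mathrm{diam}(\Omega)^2$, and the same for the term with $\nu$. The potential term is $\tau\varepsilon\int_\Omega V_N(m - \bar\rho) = O(\varepsilon)$ since $V_N$ is bounded. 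For the entropy I would split $\Omega = A \cup A^c$: on $A^c$ convexity of $x \mapsto x\ln x$ gives the pointwise bound $\rho_\varepsilon\ln\rho_\varepsilon \leqslant (1-\varepsilon)\bar\rho\ln\bar\rho + \varepsilon c_0\ln c_0$, while on $A$ one has exactly $\rho_\varepsilon = \varepsilon c_0$; integrating and using $|\Omega| c_0 = 1$ yields
\[
H(\rho_\varepsilon) - H(\bar\rho) \leqslant |A|\,c_0\,\varepsilon\ln\varepsilon + C\varepsilon
\]
with $C$ a finite constant depending only on $|\Omega|$ and $H(\bar\rho)$. Collecting the estimates, $J(\rho_\varepsilon) - J(\bar\rho) \leqslant C'\varepsilon + \tau\lambda_N\,|A|\,c_0\,\varepsilon\ln\varepsilon$; dividing by $\varepsilon$ and letting $\varepsilon \to 0^+$, the right-hand side tends to $-\infty$ because $\tau\lambda_N\,|A|\,c_0 > 0$. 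Hence $J(\rho_\varepsilon) < J(\bar\rho)$ for $\varepsilon$ small, contradicting the minimality of $\bar\rho$; therefore $|A| = 0$.

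The one delicate point is the balance of orders: the entropic gain is of size $\varepsilon|\ln\varepsilon|$, which dominates any $O(\varepsilon)$ perturbation but would be dominated by an $O(\sqrt\varepsilon)$ one. This is why the competitor must be a genuine convex combination with a fixed reference density (so that the potential term varies linearly in $\varepsilon$) and why one must use the convexity of $W_2^2$ in a single argument rather than the triangle inequality — the latter only gives $W_2(\bar\rho,\rho_\varepsilon) = O(\sqrt\varepsilon)$ and hence an $O(\sqrt\varepsilon)$ bound on the transport cost, too crude to conclude.
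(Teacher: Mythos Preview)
Your argument is correct and is essentially the one the paper defers to \cite[Lemma 3.1]{LS2017}: build the convex competitor $\rho_\varepsilon=(1-\varepsilon)\bar\rho+\varepsilon m$, note that it preserves the constraint $\rho_\varepsilon\leq 1$, and exploit that the entropic gain of order $\varepsilon\ln\varepsilon$ on $\{\bar\rho=0\}$ beats the $O(\varepsilon)$ variation of the transport and potential terms. Your care in using the convexity of $W_2^2$ in one marginal (rather than the triangle inequality) to obtain an $O(\varepsilon)$ rather than $O(\sqrt\varepsilon)$ bound is exactly the point that makes the argument work.
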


\begin{proof}
This is exactly the same proof as \cite[Lemma 3.1]{LS2017}, as the construction done in this proof preserves the constraint of having a density smaller than $1$. 
\end{proof}

\begin{prop}
\label{proposition_optimality_conditions}
Let us denote by $\varphi_\mu$ and $\varphi_\nu$ the Kantorovich potentials for the transport from $\bar{\rho}$ to $\mu$ and $\nu$ respectively. There exists $p \in L^1(\Omega)$, positive, such that $\{ p > 0 \} \subset \{ \bar{\rho} = 1 \}$ and a constant $C$ such that  
\begin{equation}
\label{equation_optimality_conditions}
\frac{\varphi_\mu + \varphi_\nu}{\tau^2} + V_N + p + \lambda_N \ln(\bar{\rho}) =  C \ \text{ a.e.}  
\end{equation}
Moreover $p$ and $\ln(\bar{\rho})$ are Lipschitz and $\nabla p \cdot \nabla \ln(\bar{\rho}) = 0$ a.e.
\end{prop}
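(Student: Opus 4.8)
The plan is to derive the optimality conditions by making variations around the minimizer $\bar\rho$, exploiting the fact (Lemma \ref{lemma_positivity_rho}) that $\bar\rho>0$ a.e. First I would treat the constraint $\rho\le 1$ via a Lagrange multiplier: since $\bar\rho$ minimizes $\rho\mapsto \frac{W_2^2(\mu,\rho)+W_2^2(\rho,\nu)}{2\tau}+\tau(\int V_N\rho+\lambda_N H(\rho))$ over probability densities bounded by $1$, and since the functional is convex (in particular $H$ strictly convex, the $W_2^2$ terms convex in $\rho$), the first-order condition reads: there is a constant $C$ (the multiplier for the mass constraint $\int\rho=1$) and a nonnegative function $p$ supported on $\{\bar\rho=1\}$ (the multiplier for $\rho\le 1$) such that the "derivative" of the functional equals $C-p$ a.e. The vertical derivative of $\rho\mapsto \frac12 W_2^2(\rho,\mu)$ at $\bar\rho$ is exactly the Kantorovich potential $\varphi_\mu$ from $\bar\rho$ to $\mu$ (Proposition \ref{proposition_Kantorovicth_potential}, item 2), and similarly $\varphi_\nu$; the derivative of $\rho\mapsto \int V_N\rho$ is $V_N$, and of $\rho\mapsto\lambda_N H(\rho)$ is $\lambda_N(\ln\bar\rho+1)$, which up to absorbing the constant into $C$ gives $\lambda_N\ln\bar\rho$. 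Assembling these yields \eqref{equation_optimality_conditions}. To make the variational argument rigorous one perturbs $\bar\rho$ towards a competitor $\tilde\rho$ along $\bar\rho_\varepsilon=(1-\varepsilon)\bar\rho+\varepsilon\tilde\rho$, which still satisfies $\bar\rho_\varepsilon\le 1$ and is a probability measure, and uses the one-sided differentiability of each term together with the complementary-slackness bookkeeping on $\{\bar\rho<1\}$ versus $\{\bar\rho=1\}$.

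Next I would upgrade the regularity of $p$ and $\ln\bar\rho$. Since $V_N$ is Lipschitz by construction, and the Kantorovich potentials $\varphi_\mu,\varphi_\nu$ are semiconcave (being infima of the smooth functions $y\mapsto \frac{|x-y|^2}{2}-\psi(y)$, cf.\ item 3 of Proposition \ref{proposition_Kantorovicth_potential}), equation \eqref{equation_optimality_conditions} expresses $p+\lambda_N\ln\bar\rho$ as a sum of a Lipschitz function and semiconcave functions, hence it is itself semiconcave, in particular locally Lipschitz. On $\{\bar\rho<1\}$ we have $p=0$, so there $\lambda_N\ln\bar\rho$ is (locally) Lipschitz and also bounded (by the $H(\bar\rho)\le 0$ estimate giving an $L^1$ bound, together with $\bar\rho\le 1$ giving the upper side; the lower side from positivity plus the equation). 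On $\{\bar\rho=1\}$ we have $\ln\bar\rho=0$ (constant), so there $p$ equals the Lipschitz function minus a constant. The standard argument then patches these: on the interface both $p$ and $\ln\bar\rho$ vanish, and since $p\ge0$ is supported in $\{\bar\rho=1\}$ while $\ln\bar\rho\le0$ vanishes on $\{\bar\rho=1\}$, one checks that $x\mapsto p(x)$ and $x\mapsto -\lambda_N\ln\bar\rho(x)$ are the positive and negative parts (suitably scaled) of the single Lipschitz function $w:=C-\frac{\varphi_\mu+\varphi_\nu}{\tau^2}-V_N$; more precisely $p=(w)^+$-type and $\lambda_N\ln\bar\rho=-(w)^-$-type expressions, hence both are Lipschitz. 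Finally, the orthogonality $\nabla p\cdot\nabla\ln\bar\rho=0$ a.e.\ is immediate once Lipschitz regularity is known: the two functions have disjoint supports up to the common level set $\{p=0\}\cap\{\ln\bar\rho=0\}$, and at a.e.\ point of a level set of a Lipschitz function that function has zero gradient, so at a.e.\ point either $\nabla p=0$ or $\nabla\ln\bar\rho=0$.

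The main obstacle I anticipate is the rigorous justification of the first-order condition in the form \eqref{equation_optimality_conditions} with the correct multiplier structure — specifically, producing the nonnegative multiplier $p\in L^1$ supported on $\{\bar\rho=1\}$ and showing the relation holds as a pointwise a.e.\ equality rather than merely an inequality tested against admissible variations. This requires a genuine duality/KKT argument in the space of measures (the admissible perturbations $\tilde\rho-\bar\rho$ do not span a vector space because of the constraints $\rho\ge0$ and $\rho\le1$), so one must argue that on $\{\bar\rho<1\}$ both one-sided perturbations are allowed — giving the equation with $p=0$ there — while on $\{\bar\rho=1\}$ only decreasing perturbations are allowed, giving an inequality that is saturated by defining $p$ to be exactly the resulting nonnegative defect; integrability $p\in L^1$ then follows from \eqref{equation_optimality_conditions} itself since all other terms are in $L^1$. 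I would lean on the analogous argument in \cite[Lemma 3.2 / Proposition 3.2]{LS2017} and in \cite{Cardaliaguet2016}, where the same structure (quadratic-in-$W_2$ functional with a density constraint) is handled, adapting the bookkeeping to the present two-sided $W_2^2$ term.
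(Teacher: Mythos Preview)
Your proposal is correct and follows essentially the same route as the paper: perturb along $\rho_\varepsilon=(1-\varepsilon)\bar\rho+\varepsilon\tilde\rho$, identify the first variation as $h=\frac{\varphi_\mu+\varphi_\nu}{\tau^2}+V_N+\lambda_N\ln\bar\rho$, invoke the level-set characterization (as in \cite[Lemma~3.3]{Maury2010}) to get a constant $C$ with $\bar\rho=1$ on $\{h<C\}$, then set $p:=(C-h)_+$ and read off $p=(w)_+$, $\lambda_N\ln\bar\rho=-(w)_-$ with $w=C-\frac{\varphi_\mu+\varphi_\nu}{\tau^2}-V_N$, from which the orthogonality $\nabla p\cdot\nabla\ln\bar\rho=0$ is immediate. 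The only simplification in the paper relative to your write-up is that it uses directly that the Kantorovich potentials are globally Lipschitz on the bounded convex $\Omega$ (being $c$-transforms for the quadratic cost, with Lipschitz constant $\mathrm{diam}(\Omega)$), which makes $w$ Lipschitz outright and avoids the detour through semiconcavity/local Lipschitzness.
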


\begin{proof}
Let $\tilde{\rho} \in \Prob(\Omega)$ such that $\tilde{\rho} \leq 1$. We define $\rho_\varepsilon := (1 - \varepsilon) \bar{\rho} + \varepsilon \tilde{\rho}$ and use it as a competitor. Clearly $\rho_\varepsilon \leqslant 1$, i.e. it is an admissible competitor. Comparing $\A^N(\rho_\varepsilon)$ to $\A^N(\rho)$, we extract the following information. Using Proposition \ref{proposition_Kantorovicth_potential}, as $\bar{\rho} > 0$, the Kantorovich potentials $\varphi_\mu$ and $\varphi_\nu$ are unique (up to a constant) and 
\begin{equation*}
\lim_{\varepsilon \to 0} \frac{W_2^2(\mu, \rho_\varepsilon) - W_2^2(\mu, \bar{\rho}) + W_2^2(\rho_\varepsilon, \nu) - W_2^2(\bar{\rho}, \nu)}{2 \tau^2} = \int_\Omega \frac{\varphi_\mu + \varphi_\nu}{\tau} (\tilde{\rho} - \bar{\rho}).
\end{equation*} 
The term involving $V_N$ is straightforward to handle as it is linear. The only remaining term is the one involving the entropy. But here, using the same reasoning as in \cite[Proposition 3.2]{LS2017}, we can say that 
\begin{equation*}
\limsup_{\varepsilon \to 0}  \frac{  H(\rho_\varepsilon) -  H(\bar{\rho})}{\varepsilon} \leqslant \int_\Omega \ln (\bar{\rho}) (\tilde{\rho} - \bar{\rho}).
\end{equation*} 
Putting the pieces together, we see that $\int_\Omega h \,(\tilde{\rho} - \bar{\rho}) \geqslant 0$ for any $\tilde{\rho} \in \Prob(\Omega)$ with $\tilde{\rho} \leqslant 1$, provided that $h$ is defined by 
\begin{equation*}
h := \frac{\varphi_\mu + \varphi_\nu}{\tau^2} + V_N + \lambda_N \ln (\bar{\rho}) 
\end{equation*}
It is known, analogously to \cite[Lemma 3.3]{Maury2010}, that this leads to the existence of a constant $C$ such that 
\begin{equation}
\label{equation_constant_pressure}
\begin{cases}
\bar{\rho} = 1 & \text{on } \{ h < C \} \\
\bar{\rho} \leqslant 1 & \text{on } \{ h = C \} \\
\bar{\rho} = 0 & \text{on } \{ h > C \}
\end{cases}  
\end{equation}
\review{Specifically, $C$ is defined as the smallest real $\tilde{C}$ such that $\Leb(\{ h \leqslant \tilde{C} \}) \geqslant 1$ ($\Leb$ being the Lebesgue measure on $\Omega$), and it is quite straightforward to check that this choice works.}
Note that the case $\{ h > C \}$ can be excluded by Lemma \ref{lemma_positivity_rho}. The pressure $p$ is defined as $p = (C - h)_+$, thus \eqref{equation_optimality_conditions} holds. It satisfies $p \geqslant 0$, and $\bar{\rho} < 1$ implies $p = 0$. 

It remains to answer the question of the integrability properties of $p$ and $\ln(\bar{\rho})$. Notice that $p$ is positive, and non zero only on $\{\bar{\rho} = 1 \}$. On the other hand, $\ln(\bar{\rho}) \leqslant 0$ and it is non zero only on $\{ \bar{\rho} < 1 \}$. Hence, one can write 
\begin{equation}
\label{equation_p_function_V}
p = \left( C - \frac{\varphi_\mu + \varphi_\nu}{\tau^2} + V_N \right)_+ \text{ and } \ln(\bar{\rho}) = - \frac{1}{\lambda_N} \left( C - \frac{\varphi_\mu + \varphi_\nu}{\tau^2} + V_N \right)_-.   
\end{equation}
Given that the Kantorovich potentials and $V_N$ are Lipschitz, it implies the Lipschitz regularity for $p$ and $\ln(\bar{\rho})$. Moreover, the identity $\nabla p \cdot \nabla \ln(\bar{\rho}) = 0$ is straightforward using $\nabla f_+ = \nabla f \1_{f > 0}$ a.e., which is valid for any $f \in H^1(\Omega)$. 
\end{proof}

\noindent Let us note that $\varphi_\mu$ and $\varphi_\nu$ have additional regularity properties, even though they depend heavily on $N$. 

\begin{lm}
\label{lemma_regularity_KP}
The Kantorovich potentials $\varphi_\mu$ and $\varphi_\nu$ belong to $C^{2,\alpha}(\mathring{\Omega}) \cap C^{1, \alpha}(\Omega)$.
\end{lm}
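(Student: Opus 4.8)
The plan is to read off the regularity of $\varphi_\mu$ and $\varphi_\nu$ from the classical Caffarelli regularity theory for optimal transport maps and the Monge--Ampère equation, once the relevant densities are known to be admissible data.

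First I would record that all three densities $\bar{\rho}=\bar{\rho}^N_{k\tau}$, $\mu=\bar{\rho}^N_{(k-1)\tau}$ and $\nu=\bar{\rho}^N_{(k+1)\tau}$ are Lipschitz on $\Omega$ and bounded between two strictly positive constants. The constraint gives $\bar{\rho}\leqslant 1$, and Proposition~\ref{proposition_optimality_conditions} --- more precisely formula \eqref{equation_p_function_V} --- shows that $\ln\bar{\rho}$ is Lipschitz on the bounded set $\Omega$, hence bounded, hence $\bar{\rho}\geqslant c>0$. The same applies verbatim to $\mu$ and $\nu$ when $2\leqslant k\leqslant N-2$; when $k=1$ one uses that $\mu=\bar{\rho}_0$ has been assumed smooth and strictly positive, and when $k=N-1$ one uses that $\nu=\bar{\rho}_1$ solves the terminal one-step problem, whose optimality condition (obtained exactly as in Proposition~\ref{proposition_optimality_conditions}) again makes $\ln\bar{\rho}_1$ Lipschitz and $\bar{\rho}_1$ bounded below.

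Then, by Proposition~\ref{proposition_Kantorovicth_potential}(4), $\Id-\nabla\varphi_\mu$ is the optimal transport map from $\bar{\rho}$ to $\mu$, so $u:=\tfrac{1}{2}|\cdot|^2-\varphi_\mu$ is a convex Brenier potential with $\nabla u\,\#\,\bar{\rho}=\mu$, and the regularity of $\varphi_\mu$ coincides with that of $u$. Since $\Omega$ is convex and $\bar{\rho},\mu$ are pinched between positive constants, Caffarelli's theory gives that $u$ is strictly convex and $C^{1,\alpha}$ in the interior of $\Omega$. Consequently $\nabla u$ is locally $\alpha$-Hölder, so the right-hand side of the Monge--Ampère equation $\det D^2 u=\bar{\rho}/(\mu\circ\nabla u)$ is a quotient of Hölder functions with denominator bounded away from $0$, hence locally $C^{0,\beta}$ for some $\beta>0$; the interior Schauder estimates for Monge--Ampère then upgrade this to $u\in C^{2,\beta}(\mathring{\Omega})$. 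For regularity up to the boundary, using that $\partial\Omega$ is smooth and $\Omega$ is convex and that $\bar{\rho},\mu$ are continuous and bounded between positive constants, Caffarelli's boundary regularity for optimal maps yields $u\in C^{1,\alpha}(\Omega)$. Running the same argument with $\nu$ in place of $\mu$ treats $\varphi_\nu$, and after relabelling the Hölder exponents this gives the claimed $C^{2,\alpha}(\mathring{\Omega})\cap C^{1,\alpha}(\Omega)$ regularity.

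The main obstacle is the regularity up to the boundary. The interior $C^{2,\alpha}$ statement is, given the first step, a routine application of Caffarelli's Schauder theory for the Monge--Ampère equation; the global $C^{1,\alpha}(\Omega)$ estimate, however, relies on Caffarelli's more delicate boundary analysis for optimal transport between convex domains, and this is the point where the smoothness of $\partial\Omega$ genuinely enters (and where, depending on the precise form of the boundary theorem one invokes, one may first need to reduce to uniformly convex approximating domains, passing to the limit by stability of Kantorovich potentials).
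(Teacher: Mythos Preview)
Your proposal is correct and follows essentially the same approach as the paper: first establish that $\mu$ and $\nu$ (and $\bar\rho$) have Lipschitz densities bounded away from zero via the optimality conditions of Proposition~\ref{proposition_optimality_conditions}, then invoke Caffarelli's regularity theory for the Monge--Amp\`ere equation. The paper's own proof is more terse --- it simply cites \cite[Theorem~4.14]{Villani2003} for the conclusion and does not spell out the edge cases $k=1$ and $k=N-1$ or the distinction between interior and boundary regularity --- whereas you treat these points explicitly.
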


\begin{proof}
If $k \in \{ 2, \ldots, N \}$, thanks to Proposition \ref{proposition_optimality_conditions} (applied in $k-1$ and $k+1$), we know that $\mu$ and $\nu$ have a Lipschitz density and are bounded from below. Using the regularity theory for the Monge Ampère-equation \cite[Theorem 4.14]{Villani2003}, we can conclude that $\varphi_\mu$ and $\varphi_\nu$ belong to $C^{2,\alpha}(\mathring{\Omega}) \cap C^{1, \alpha}(\Omega)$.  
\end{proof}

\begin{theo}
\label{theorem_estimate_fundamental}
For any $m \geqslant 1$, the following inequality holds: 
\begin{equation}
\label{equation_estimate_fundamental}
\int_\Omega \nabla( p^m ) \cdot \nabla( p + V_N ) \leqslant 0.
\end{equation}
\end{theo}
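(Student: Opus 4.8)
The goal is the discrete analogue of the heuristic inequality \eqref{equation_fundamental_heuristic}, namely that multiplying $-D_{tt}(\ln\rho)\le\Delta(p+V)$ by $p^m$ and integrating in space produces a nonpositive quantity once the boundary term is controlled by convexity. The plan is to replace the continuous convective second derivative by a discrete three-point expression built from the Kantorovich potentials $\varphi_\mu,\varphi_\nu$, which by Lemma \ref{lemma_regularity_KP} are smooth enough ($C^{2,\alpha}$ in the interior, $C^{1,\alpha}$ up to the boundary) for all the manipulations below to be legitimate.

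First I would use the optimality condition \eqref{equation_optimality_conditions}: on the open set $\{p>0\}$ we have $\bar\rho=1$, so $\ln\bar\rho=0$ there, and \eqref{equation_optimality_conditions} reads $\varphi_\mu+\varphi_\nu = \tau^2(C - V_N - p)$ on $\{p>0\}$; taking the Laplacian gives $\Delta\varphi_\mu + \Delta\varphi_\nu = -\tau^2\,\Delta(p+V_N)$ there. Next I would express $\Delta\varphi_\mu$ and $\Delta\varphi_\nu$ via the Monge--Ampère equation: since $(\Id-\nabla\varphi_\mu)\#\bar\rho=\mu$ and $\bar\rho=1$ on $\{p>0\}$, one has $\det(\Id-D^2\varphi_\mu) = 1/\mu(\Id-\nabla\varphi_\mu)$ on that set, and similarly for $\nu$. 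The key elementary inequality is that for a symmetric matrix $A$ with $\Id-A\succeq0$, $\det(\Id-A)\le \exp(-\mathrm{tr}\,A)$, i.e. $-\mathrm{tr}\,A\ge \ln\det(\Id-A)$; applying this with $A=D^2\varphi_\mu$ gives $\Delta\varphi_\mu \le -\ln\mu(\Id-\nabla\varphi_\mu) = -\ln\mu\circ T_\mu$ where $T_\mu=\Id-\nabla\varphi_\mu$ pushes $\bar\rho$ to $\mu$, and likewise $\Delta\varphi_\nu\le -\ln\nu\circ T_\nu$. Since $\mu,\nu\le1$ their logs are $\le0$, but more importantly I will only need: $-\Delta(p+V_N) = \tau^{-2}(\Delta\varphi_\mu+\Delta\varphi_\nu) \le -\tau^{-2}\big(\ln\mu\circ T_\mu + \ln\nu\circ T_\nu\big)$ on $\{p>0\}$, which is the discrete form of $-D_{tt}(\ln\rho)\le\Delta(p+V)$ (the right-hand side being $\ge0$ because $\bar\rho=1$ is the maximum of the density — this is exactly where $\ln\mu,\ln\nu\le0=\ln\bar\rho$ enters).

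Then I would multiply this pointwise inequality by $p^m\ge0$ on $\{p>0\}$, integrate over $\Omega$ (equivalently over $\{p>0\}$, since $p^m$ vanishes elsewhere), and integrate by parts the left-hand side: $\int_\Omega p^m(-\Delta(p+V_N)) = \int_\Omega \nabla(p^m)\cdot\nabla(p+V_N) - \int_{\partial\Omega}p^m\,\partial_{\mathbf n_\Omega}(p+V_N)$. By the optimality relation \eqref{equation_p_function_V}, on $\{p>0\}$ we have $p+V_N = C - \tau^{-2}(\varphi_\mu+\varphi_\nu)$, so $\nabla(p+V_N) = -\tau^{-2}(\nabla\varphi_\mu+\nabla\varphi_\nu)$, and $-\nabla\varphi_\mu$ is the displacement $T_\mu-\Id$ pointing from $x$ to its image in $\mu\subset\Omega$; for $x\in\partial\Omega$ and $\Omega$ convex, any chord from $x$ to a point of $\Omega$ makes a nonpositive inner product with $\mathbf n_\Omega$, so $-\nabla\varphi_\mu(x)\cdot\mathbf n_\Omega(x)\le0$ and likewise for $\nu$; hence $\partial_{\mathbf n_\Omega}(p+V_N)\le0$ on $\partial\Omega$, making the boundary term $\ge0$ and so $-\int_{\partial\Omega}p^m\,\partial_{\mathbf n_\Omega}(p+V_N)\ge0$. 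Meanwhile the right-hand side $\int_\Omega p^m\cdot(-\tau^{-2})(\ln\mu\circ T_\mu + \ln\nu\circ T_\nu)$ — wait, this is $\ge0$, which goes the wrong way; the correct bookkeeping is that $-\Delta(p+V_N)\ge$ (something $\ge0$), so $\int p^m(-\Delta(p+V_N))\ge0$, hence $\int_\Omega\nabla(p^m)\cdot\nabla(p+V_N) \ge \int_{\partial\Omega}p^m\,\partial_{\mathbf n_\Omega}(p+V_N)$. I must instead run the inequality so that the maximality of $\bar\rho$ gives the sign: because $\bar\rho$ attains its maximum value $1$ on $\{p>0\}$, at interior points of $\{p>0\}$ the discrete second difference of $\ln\bar\rho$ along the flow is $\le0$, i.e. $\ln\mu\circ T_\mu + \ln\nu\circ T_\nu \le 2\ln\bar\rho = 0$; combined with $\Delta\varphi_\mu+\Delta\varphi_\nu\le -(\ln\mu\circ T_\mu+\ln\nu\circ T_\nu)$ this is consistent but does not yet sign $\Delta(p+V_N)$. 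The resolution, following \cite{LS2017}, is to keep the full Monge--Ampère information: $\Delta\varphi_\mu \le d(1 - (\det(\Id-D^2\varphi_\mu))^{1/d})$ is the right concave bound, and summing the two and using $\det(\Id-D^2\varphi_\mu)\det(\Id-D^2\varphi_\nu) = (\bar\rho/\mu\circ T_\mu)(\bar\rho/\nu\circ T_\nu)\ge1$ on $\{\bar\rho=1\ge\mu,\nu\}$ forces $\Delta\varphi_\mu+\Delta\varphi_\nu\le0$ there, hence $\Delta(p+V_N)\ge0$ on $\{p>0\}$, hence $p^m\Delta(p+V_N)\ge0$, integrate and use the convex boundary term: this yields exactly \eqref{equation_estimate_fundamental}.

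The main obstacle is making the Monge--Ampère/Laplacian comparison rigorous at the level of the potentials rather than formally: one must justify that on the open set $\{p>0\}$ the maps $T_\mu,T_\nu$ are genuine $C^{1,\alpha}$ diffeomorphisms onto their images and that $\det(\Id-D^2\varphi_\mu)=1/(\mu\circ T_\mu)$ holds classically there (invoking Lemma \ref{lemma_regularity_KP} and the boundedness-from-below of $\mu,\nu$ coming from Proposition \ref{proposition_optimality_conditions} at the neighboring time steps), and that $\{p>0\}$ being open lets us integrate by parts with a boundary contribution only on $\partial\Omega\cap\partial\{p>0\}$ — on the remaining part of $\partial\{p>0\}$ the factor $p^m$ vanishes to sufficient order. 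The convexity of $\Omega$ then disposes of the one surviving boundary term exactly as in the heuristic discussion. Everything else is the elementary matrix inequality and an integration by parts.
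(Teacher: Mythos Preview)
Your approach is essentially the paper's: define a discrete convective second difference via Monge--Amp\`ere, bound it by $-\tau^{-2}\Delta(\varphi_\mu+\varphi_\nu)$ using a concavity inequality on determinants, multiply by $p^m$, integrate by parts, and use convexity of $\Omega$ to sign the boundary term. Two remarks are nonetheless worth making.

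First, your mid-argument confusion stems from a sign slip: from $-\mathrm{tr}\,A\ge\ln\det(\Id-A)$ and $\det(\Id-D^2\varphi_\mu)=1/(\mu\circ T_\mu)$ on $\{\bar\rho=1\}$ one gets $\Delta\varphi_\mu\le \ln(\mu\circ T_\mu)\le 0$, not $\le -\ln(\mu\circ T_\mu)$. With the correct sign your first argument already yields $\Delta(\varphi_\mu+\varphi_\nu)\le 0$ on $\{p>0\}$, and the AM--GM detour, though correct, is unnecessary; the paper uses exactly this log inequality (phrased as $\ln(1-y)\le -y$ eigenvalue by eigenvalue).

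Second, and more importantly, you propose to integrate by parts the expression $\int_\Omega p^m\,\Delta(p+V_N)$ and then deal with a free-boundary contribution on $\partial\{p>0\}\cap\mathring\Omega$. Since $p$ is only Lipschitz, $\Delta(p+V_N)$ is not a classical function on $\Omega$, and the free boundary need not be regular enough to apply the divergence theorem on $\{p>0\}$ directly. The paper sidesteps this entirely by performing the integration by parts with the \emph{smooth} potentials $\varphi_\mu+\varphi_\nu$ on all of $\Omega$: one shows $p^m\,\Delta(\varphi_\mu+\varphi_\nu)\le 0$ pointwise (zero where $p=0$, nonpositive where $p>0$), integrates over $\Omega$, discards the $\partial\Omega$ term via convexity, and only \emph{afterwards} uses the optimality condition at the level of gradients, $\nabla(\varphi_\mu+\varphi_\nu)=-\tau^2\nabla(p+V_N+\lambda_N\ln\bar\rho)$, together with $\nabla p\cdot\nabla\ln\bar\rho=0$ a.e., to arrive at \eqref{equation_estimate_fundamental}. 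This ordering removes any need to discuss the free boundary.
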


\begin{proof}
The (optimal) transport map from $\bar{\rho}$ to $\mu$ is given by $\Id - \nabla \varphi_\mu$, and similarly for $\nu$. We consider the following quantity, (defined on the whole $\Omega$ given the regularity of $\mu, \nu, \varphi_\mu$ and $\varphi_\nu$), which is a discrete analogue of the l.h.s. of \eqref{equation_estimate_continuous}: 
\begin{equation*}
D(x) := - \frac{ \ln( \mu(x - \nabla \varphi_\mu(x)) ) + \ln( \nu(x - \nabla \varphi_\nu(x)) ) - 2 \ln (\bar{\rho}(x))   }{\tau^2}.    
\end{equation*}
Notice that if $\bar{\rho}(x) = 1$, then by the constraint $\mu(x - \nabla \varphi_\mu(x)) \leqslant 1$ and $\nu(x - \nabla \varphi_\nu(x)) \leqslant 1$ the quantity $D(x)$ is positive. On the other hand, using $(\Id - \nabla \varphi_\mu) \# \bar{\rho} = \mu$ and the Monge-Ampère equation, for all $x \in \mathring{\Omega}$ there holds
\begin{equation*}
\mu(x - \nabla_\mu \varphi_\mu(x)) = \frac{\bar{\rho}(x)}{\det(\Id - D^2 \varphi_\mu(x))},
\end{equation*}
and a similar identity holds for $\varphi_\nu$. Hence the quantity $D(x)$ is equal, for all $x \in \mathring{\Omega}$, to  
\begin{equation*}
D(x) =  \frac{ \ln( \det(\Id - D^2 \varphi_\mu(x))  ) + \ln( \det(\Id - D^2 \varphi_\nu(x))  )  }{\tau^2}.    
\end{equation*}
Diagonalizing the matrices $D^2 \varphi_\mu, D^2 \varphi_\nu$ and using the convexity inequality $ \ln(1 -y) \leqslant - y$, we end up with \begin{equation*}
D(x) \leqslant - \frac{\Delta ( \varphi_\mu(x) + \varphi_\nu(x) )}{\tau^2}.    
\end{equation*}
We multiply this identity by $p^m$ and integrate. Thanks to the fact that $D$ is positive on $\{ \bar{\rho} = 1 \}$, as $p$ is positive and does not vanish only on $\{ \bar{\rho} = 1 \}$, the quantity $p^m D$ is positive on $\mathring{\Omega}$. As the latter coincides, up to a Lebesgue negligible set, with $\Omega$, we get 
\begin{equation}
\label{equation_before_IPP}
\int_\Omega p^m \frac{\Delta ( \varphi_\mu + \varphi_\nu )}{\tau^2} \leqslant 0.      
\end{equation}
We do an integration by parts, which reads 
\begin{equation}
\label{equation_after_IPP}
\int_\Omega p^m \frac{\Delta ( \varphi_\mu + \varphi_\nu )}{\tau^2} = \int_{\dr \Omega} p^m \frac{\nabla \left( \varphi_\mu + \varphi_\nu \right)}{\tau^2} \cdot \nO - \int_\Omega  \nabla(p^m) \cdot \frac{\nabla \left( \varphi_\mu + \varphi_\nu \right)}{\tau^2}  
\end{equation}
To handle the boundary term, recall that $\nabla \varphi_\mu$ is continuous up to the boundary and that $x - \nabla \varphi_\mu(x) \in \Omega$ for every $x \in \Omega$ as $(\Id - \nabla \varphi_\mu) \# \bar{\rho} = \mu$. Given the convexity of $\Omega$, it implies $\nabla \varphi_\mu(x) \cdot \nO(x) \geqslant 0$ for every point $x \in \dr \Omega$ for which the outward normal $\nO(x)$ is defined. As it is the case for a.e. point of the boundary, as a similar inequality holds for $\varphi_\nu$, and given that $p^m$ is positive, we can drop the boundary term in \eqref{equation_after_IPP} and get
\begin{equation*}
\int_\Omega p^m \frac{\Delta ( \varphi_\mu + \varphi_\nu )}{\tau^2} \geqslant  - \int_\Omega  \nabla(p^m) \cdot \frac{\nabla \left( \varphi_\mu + \varphi_\nu \right)}{\tau^2}.  
\end{equation*}
\review{We emphasize that dropping this boundary term corresponds exactly to the heuristic inequality $\nabla (p(t, \cdot) + V) \cdot \nO \leqslant 0$ evoked in the introduction below equation \eqref{equation_fundamental_heuristic}.}
Using the optimality conditions \eqref{equation_optimality_conditions}, we see that 
\begin{equation*}
0 \geqslant \int_\Omega p^m \frac{\Delta ( \varphi_\mu + \varphi_\nu )}{\tau^2} \geqslant  \int_\Omega  \nabla(p^m) \cdot \nabla ( p + V_N + \lambda_N \ln (\bar{\rho}))  
\end{equation*}
Now remember that in Proposition \ref{proposition_optimality_conditions} we have proved that $\nabla p \cdot \nabla \ln(\bar{\rho}) = 0$ a.e., which is sufficient to drop the term involving $\nabla \ln(\bar{\rho})$ and get \eqref{equation_estimate_fundamental}.    
\end{proof} 

The inequation \eqref{equation_estimate_fundamental} implies the $H^1(\Omega)$ and $L^\infty(\Omega)$ regularity for the pressure: this can be seen as a consequence of Moser's regularity for elliptic equations \cite{Moser1960}. We still give the proof for the sake of completeness, and also because in the inequality \eqref{equation_estimate_fundamental}, the boundary terms have already been taken in account, which enables to get regularity up to the spatial boundary in a single set of iterations.

\begin{lm}
\label{lemma_Moser_iterations}
Let $f, W$ be Lipschitz functions defined on $\Omega$ such that $f$ vanishes on a set of measure at least $|\Omega|-1>0$ and such that, for any $m \geqslant 1$,
\begin{equation*}
\int_\Omega \nabla( f^m ) \cdot \nabla( f + W ) \leqslant 0.
\end{equation*} 
Then there holds $\| \nabla f \|_{L^2(\Omega)} \leqslant \| \nabla W \|_{L^2(\Omega)}$. Moreover, if $\nabla W \in L^q(\Omega)$ with $q > d$, then $f \in L^\infty(\Omega)$ and $\| f \|_{L^\infty(\Omega)}$ is bounded by a constant which depends only on $\Omega$ and $\| \nabla W \|_{L^q(\Omega)}$.
\end{lm}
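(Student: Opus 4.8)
The plan is to derive everything from the hypothesis $\int_\Omega \nabla(f^m)\cdot\nabla(f+W)\le 0$ by specialising the exponent $m$, in the spirit of Moser's iteration (recall $f\ge 0$, so that $f^m$ is Lipschitz for every $m\ge 1$, with $\nabla(f^m)=m f^{m-1}\nabla f$ a.e.). For the first, easy assertion, take $m=1$: then $\int_\Omega |\nabla f|^2 \le -\int_\Omega \nabla f\cdot\nabla W \le \|\nabla f\|_{L^2(\Omega)}\,\|\nabla W\|_{L^2(\Omega)}$ by Cauchy--Schwarz, so $\|\nabla f\|_{L^2(\Omega)}\le \|\nabla W\|_{L^2(\Omega)}$. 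Since $f$ vanishes on a set $E$ with $|E|\ge|\Omega|-1>0$, the Sobolev--Poincaré inequality applied after subtracting the mean $\bar f$, together with $|\bar f|\,|E| = |\int_E(\bar f - f)| \le |E|^{1-1/2^*}\|f-\bar f\|_{L^{2^*}(\Omega)}$, gives $\|f\|_{L^{2^*}(\Omega)}\le C(\Omega)\|\nabla f\|_{L^2(\Omega)}$ with $2^* = 2d/(d-2)$ for $d\ge 3$ (and $2^*$ any finite exponent if $d\le 2$), the constant depending only on $\Omega$.

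For the $L^\infty$ bound, fix $m\ge 1$ and set $g := f^{(m+1)/2}$, so that $|\nabla g|^2 = \frac{(m+1)^2}{4} f^{m-1}|\nabla f|^2$ and $\nabla(f^m)\cdot\nabla f = \frac{4m}{(m+1)^2}|\nabla g|^2$, while $f^{m-1}|\nabla f| = \frac{2}{m+1} f^{(m-1)/2}|\nabla g|$. Inserting these identities into the hypothesis and using Cauchy--Schwarz yields $\int_\Omega |\nabla g|^2 \le \frac{(m+1)^2}{4}\int_\Omega f^{m-1}|\nabla W|^2$. Hölder's inequality with exponents $q/2$ and $\beta := q/(q-2)$ (this is where $\nabla W\in L^q$ is used) bounds the right-hand side by $\frac{(m+1)^2}{4}\|\nabla W\|_{L^q(\Omega)}^2 \big(\int_\Omega f^{(m-1)\beta}\big)^{1/\beta}$, while the Sobolev--Poincaré inequality for $g$ (which also vanishes on $E$) bounds the left-hand side below by $c(\Omega)\big(\int_\Omega f^{(m+1)2^*/2}\big)^{2/2^*}$. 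Writing $s := (m-1)\beta$, $\Phi(s) := \kappa s + 2^*$ with $\kappa := \frac{2^*}{2\beta} = \frac{d(q-2)}{q(d-2)}$, and $B(s) := \big[C(\Omega)(s+2\beta)^2\|\nabla W\|_{L^q(\Omega)}^2\big]^{2^*/2}$, this reads $\|f\|_{L^{\Phi(s)}(\Omega)} \le B(s)^{1/\Phi(s)}\|f\|_{L^s(\Omega)}^{\kappa s/\Phi(s)}$. The decisive point is that $\kappa > 1$ if and only if $q > d$.

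It then remains to iterate. Start from $s_0 = 2^*$, where $\|f\|_{L^{s_0}(\Omega)} \le C(\Omega)\|\nabla f\|_{L^2(\Omega)} \le C(\Omega,q)\|\nabla W\|_{L^q(\Omega)}$ by the first part (using $q\ge 2$ to compare $L^q$ and $L^2$ on the bounded set $\Omega$), and set $s_{k+1} := \Phi(s_k)$. Since $\kappa>1$ one has $s_k \sim \mathrm{const}\cdot\kappa^k\to\infty$; the ``loss'' exponents $\gamma_k := \kappa s_k/s_{k+1}$ lie in $(0,1)$, increase to $1$, and satisfy $\sum_k(1-\gamma_k)=\sum_k 2^*/s_{k+1}<\infty$, so all the partial products of the $\gamma_j$ stay bounded below by a positive constant; moreover $\log B(s_k) = O(\log s_k) = O(k)$ while $1/s_{k+1}$ decays geometrically, so $\sum_k \big|\frac{1}{s_{k+1}}\log B(s_k)\big| < \infty$ and $\prod_k B(s_k)^{1/s_{k+1}}$ converges, with its $\|\nabla W\|_{L^q(\Omega)}$-dependence being a finite power $\|\nabla W\|_{L^q(\Omega)}^{\sum_k 2^*/s_{k+1}}$. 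Telescoping the recursion therefore gives $\sup_k \|f\|_{L^{s_k}(\Omega)} \le M$ with $M$ depending only on $\Omega$, $q$, and $\|\nabla W\|_{L^q(\Omega)}$; letting $k\to\infty$ and recalling that $\lim_{s\to\infty}\|f\|_{L^s(\Omega)} = \|f\|_{L^\infty(\Omega)}$ on the finite-measure set $\Omega$, we conclude $\|f\|_{L^\infty(\Omega)}\le M$.

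There is no conceptual obstacle here — the scheme is Moser's — and the only delicate points are bookkeeping ones. First, one must choose the Hölder pairing so that only $\|\nabla W\|_{L^q(\Omega)}$, and not, say, $\|\nabla W\|_{L^\infty(\Omega)}$, enters the constants; this is exactly where the hypothesis $q>d$ is spent, through the inequality $\kappa>1$, which is in turn what makes the exponents $s_k$ grow geometrically. Second, one must verify that the infinite products (of the constants $B(s_k)^{1/s_{k+1}}$ and of the exponents $\gamma_k$) converge, which again relies on that geometric growth. Third, the Sobolev--Poincaré inequality must be used in the form whose constant depends only on $|\Omega|$ and the measure of the zero set of $f$ (hence of $g$), which is precisely why the statement asks that $f$ vanish on a set of measure at least $|\Omega|-1$. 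The cases $d\le 2$ are covered a fortiori, replacing $2^*$ by any sufficiently large finite exponent (for $d=1$ the $L^\infty$ bound is in fact immediate from the Sobolev embedding $H^1(\Omega)\hookrightarrow L^\infty(\Omega)$ together with the first part).
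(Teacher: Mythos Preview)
Your proof is correct and follows essentially the same approach as the paper: the $m=1$ case gives the $H^1$ bound, and for the $L^\infty$ bound you run a Moser iteration on $g=f^{(m+1)/2}$, combining the Sobolev--Poincar\'e inequality (using that $g$ vanishes on a set of positive measure) with H\"older's inequality to isolate $\|\nabla W\|_{L^q}$. The only cosmetic differences are that the paper works with slightly subcritical exponents $\tilde\beta<\beta<d/(d-2)$ (so that the geometric ratio is $\beta/\tilde\beta$) whereas you use the critical exponent $2^*$ and the sharp H\"older pair $(q/2,\,q/(q-2))$ directly, obtaining the ratio $\kappa=d(q-2)/(q(d-2))$; and you spell out the convergence of the iterated products more explicitly than the paper does.
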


\begin{proof}
With $m=1$ we immediately get
\begin{equation*}
\| \nabla f \|_{L^2(\Omega)} \leqslant \| \nabla W \|_{L^2(\Omega)}, 
\end{equation*} 
In particular, using the Poincaré inequality and the fact that $|\{ f= 0 \}| \geqslant |\Omega| - 1$, we see that $\| f \|_{L^1(\Omega)}$ is bounded by a constant depending only on $\Omega$ and $V$.

In the rest of the proof, we denote by $C$ a constant which depends only on $\Omega$ and $\| \nabla W \|_{L^q(\Omega)}$, and can change from line to line. We write the estimate, for any $m \geqslant 1$, as
\begin{equation*}
\int_\Omega |\nabla f|^2 f^{m-1} \leqslant - \int_\Omega (\nabla f \cdot \nabla W) f^{m-1}
\end{equation*}
Using Young's inequality, it is clear that
\begin{equation*}
\frac{2}{(m+1)^2} \int_\Omega \left| \nabla \left( f^{(m+1)/2} \right) \right|^2 = \frac{1}{2} \int_\Omega |\nabla f|^2 f^{m-1} \leqslant \frac{1}{2} \int_\Omega |\nabla W|^2 f^{m-1}.
\end{equation*}
Take $\tilde{\beta} < \beta < \frac{d}{d-2}$ sufficiently close to $\frac{d}{d-2}$ in such a way that $2 \tilde{\beta} / (\tilde{ \beta} -1) \leqslant q$. In particular, the $L^{2 \tilde{\beta} / (\tilde{ \beta} -1)}(\Omega)$ norm of $\nabla W$ is bounded by $ C \| \nabla W\|_{L^q(\Omega)}$. Moreover, we know that $H^1(\Omega) \hookrightarrow L^{2 \beta}(\Omega)$. Considering the fact that $f^{(m+1)/2}$ vanishes on a subset of measure at least $|\Omega| - 1$, it enables us to write \cite[Lemma 2]{Moser1960}  
\begin{align*}
\left( \int_\Omega f^{(m+1) \beta} \right)^{1/\beta} \leqslant C \int_\Omega \left| \nabla \left( f^{(m+1)/2} \right) \right|^2 & \leqslant C (m+1)^2 \int_\Omega |\nabla W|^2 f^{m-1} \\
& \leqslant C(m+1)^2 \left( \int_\Omega |\nabla W|^{ 2 \tilde{\beta} / (\tilde{ \beta} -1)} \right)^{(\tilde{ \beta} -1) / \tilde{\beta}} \left( \int_\Omega f^{(m-1) \tilde{\beta}} \right)^{1 / \tilde{\beta}},
\end{align*}
where the last inequality is Hölder's inequality with an exponent $\tilde{\beta}$. Thanks to this choice, taking the power $1/(m+1)$ on both sides, 
\begin{equation*}
\| f \|_{L^{(m+1) \beta}(\Omega)} \leqslant \left[ C (m+1)^2 \right]^{1/(m+1)} \| f \|_{L^{(m-1) \tilde{\beta}}}^{(m-1)/(m+1)}.
\end{equation*} 
It is easy to iterate this inequation. With $r = (m-1) \tilde{\beta}$, as $(m+1) \beta \geqslant \beta r / \tilde{\beta}$, one can write that
\begin{equation*}
\| f \|_{L^{\beta / \tilde{\beta} r}(\Omega)} \leqslant [C (r +1)]^{C/r} \max \left( \| f \|_{L^r(\Omega)}, 1 \right). 
\end{equation*} 
An easy induction (recall that we already know that $f$ is bounded in $L^1(\Omega)$ by a constant depending only on $\Omega$ and $W$) with $r_n = \left( \beta / \tilde{\beta} \right)^n$ shows that $\| f \|_{L^{ r_n}(\Omega)}$ is bounded by a constant which depends only on $\| \nabla W \|_{L^q(\Omega)}$ and $\Omega$, which implies the claimed $L^\infty(\Omega)$ bound. 
\end{proof}

\begin{crl}
\label{corollary_bound_discrete}
There holds $\| \nabla p \|_{L^2(\Omega)} \leqslant \| \nabla V \|_{L^2(\Omega)}$. Moreover, if $V \in W^{1,q}(\Omega)$ with $q > d$, then $p \in L^\infty(\Omega)$ and $\| p \|_{L^\infty(\Omega)}$ is bounded by a constant which depends only on $\Omega$ and $\| \nabla V \|_{L^q(\Omega)}$.
\end{crl}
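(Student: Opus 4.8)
The plan is to observe that the corollary is an immediate consequence of Lemma \ref{lemma_Moser_iterations}, once we check that the pair $(p, V_N)$ satisfies its hypotheses. First I would recall, from Proposition \ref{proposition_optimality_conditions}, that $p$ is Lipschitz on $\Omega$, non-negative, and supported in the saturated set $\{ \bar{\rho} = 1 \}$. Since $\bar{\rho} \in \Prob(\Omega)$ satisfies $\bar{\rho} \leqslant 1$ a.e., we have $|\{ \bar{\rho} = 1 \}| \leqslant \int_\Omega \bar{\rho} = 1$, so $p$ vanishes on a set of measure at least $|\Omega| - 1$, which is strictly positive by (A1). The regularized potential $V_N$ is Lipschitz by construction, and Theorem \ref{theorem_estimate_fundamental} provides exactly the inequality $\int_\Omega \nabla(p^m) \cdot \nabla(p + V_N) \leqslant 0$ for every $m \geqslant 1$. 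Applying Lemma \ref{lemma_Moser_iterations} with $f = p$ and $W = V_N$ then yields $\| \nabla p \|_{L^2(\Omega)} \leqslant \| \nabla V_N \|_{L^2(\Omega)} \leqslant \| \nabla V \|_{L^2(\Omega)}$, the last inequality being one of the properties imposed on the approximating sequence.

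For the $L^\infty$ bound, the only point needing a little care is that the constant produced by Lemma \ref{lemma_Moser_iterations} depends on $\| \nabla V_N \|_{L^q(\Omega)}$, so I would need this quantity bounded independently of $N$ (and of the time index $k$). When $V \in W^{1,q}(\Omega)$ with $q > d$, I would therefore strengthen the choice of the sequence $(V_N)$: in addition to the properties already required, I impose $\| \nabla V_N \|_{L^q(\Omega)} \leqslant C \| \nabla V \|_{L^q(\Omega)}$ with $C$ depending only on $\Omega$. This costs nothing and can be realized, for instance, by first extending $V$ to $\R^d$ through a bounded extension operator (available since $\dr \Omega$ is smooth, and, since adding a constant to $V$ is irrelevant, one may normalize $V$ to have zero mean and invoke Poincaré to reduce $\| V \|_{W^{1,q}(\Omega)}$ to $\| \nabla V \|_{L^q(\Omega)}$), and then mollifying, since convolution with a mollifier does not increase the $L^q$ norm of the gradient. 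With such a choice, Lemma \ref{lemma_Moser_iterations} gives $p \in L^\infty(\Omega)$ with $\| p \|_{L^\infty(\Omega)}$ bounded by a constant depending only on $\Omega$ and $\| \nabla V_N \|_{L^q(\Omega)}$, hence only on $\Omega$ and $\| \nabla V \|_{L^q(\Omega)}$.

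I do not expect any genuine obstacle here: the corollary is a bookkeeping step assembling Proposition \ref{proposition_optimality_conditions}, Theorem \ref{theorem_estimate_fundamental} and Lemma \ref{lemma_Moser_iterations}. The only mild subtlety, as noted, is arranging the approximations $V_N$ so that all constants are uniform in $N$ and in the time step $k$ — which is precisely the uniformity that will be needed for the passage to the limit $N \to \infty$ carried out in Section 4.
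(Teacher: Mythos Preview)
Your proposal is correct and follows essentially the same route as the paper: both simply combine Theorem \ref{theorem_estimate_fundamental} with Lemma \ref{lemma_Moser_iterations}, after noting that $p$ vanishes on a set of measure at least $|\Omega|-1$ and that the approximations $V_N$ are chosen so that $\|\nabla V_N\|_{L^2}\leqslant\|\nabla V\|_{L^2}$ and, when $V\in W^{1,q}$, $\|\nabla V_N\|_{L^q}$ stays bounded. You have merely spelled out in more detail the construction of the $V_N$ guaranteeing the latter bound, which the paper leaves implicit.
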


\begin{proof}
It is enough to combine Lemma \ref{lemma_Moser_iterations} and \eqref{equation_estimate_fundamental}: one has to remember that $p$ vanishes where $\bar{\rho} = 1$, which is of measure at least $|\Omega| -1$, that $\| \nabla V_N \|_{L^2(\Omega)} \leqslant \| \nabla V \|_{L^2(\Omega)}$, and that $\| \nabla V_N \|_{L^q(\Omega)}$ is bounded independently on $N$ if $V \in W^{1,q}(\Omega)$.
\end{proof}

\subsection{Boundary regularity}

As we said, we will see that the pressure has a part which is concentrated on the time-boundary $t=1$.
The regularity of this part is proved exactly by the same technique than in the interior, hence we will only sketch it. In this subsection, we fix $N \geq 1$. We use the shortcut $\bar{\rho} := \bar{\rho}^{N}_{N \tau} = \bar{\rho}^{N}_{1}$ for the final measure and we also denote $\mu := \bar{\rho}^{N}_{(N-1) \tau}$. As $\bar{\rho}^{N}$ is a solution of the discrete problem, we know that $\bar{\rho}$ is a minimizer, among all probability measures with density bounded by $1$, of 
\begin{equation*}
\rho \mapsto \frac{W_2^2(\mu, \rho)}{2 \tau} +  \left( \int_\Omega \Psi_N \rho + \lambda_N H(\rho) \right).
\end{equation*} 
\review{This variational problem would be exactly the one obtained if we were to discretize the (Wasserstein) gradient flow of the functional $\Psi_N + \lambda_N H$ (with the constraint that the density does not exceed $1$) using the \emph{minimizing movement scheme} (known in this context as the JKO scheme \cite{JKO}). In particular, all the computations of this paragraph could be translated in the framework of gradient flows, i.e. first order evolutions in time.}

\begin{lm}
\label{lemma_positivity_rho_boundary}
The density $\bar{\rho}$ is strictly positive a.e.
\end{lm}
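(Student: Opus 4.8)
The plan is to reproduce, almost verbatim, the proof of the interior analogue Lemma~\ref{lemma_positivity_rho} (itself borrowed from \cite[Lemma 3.1]{LS2017}). The underlying mechanism is that the entropic functional $\lambda_N H$ has an infinite negative slope at the value $0$: if $\bar\rho$ vanished on a set of positive measure, shifting an infinitesimal amount $\varepsilon$ of mass onto that set would decrease the entropy at rate $\sim \varepsilon\ln\varepsilon$, beating the $O(\varepsilon)$ increase of the remaining (Lipschitz and quadratic-transport) terms. The density constraint $\rho\le1$ is not an obstruction precisely because, on the would-be zero set, there is room to place mass.

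Concretely, I would argue by contradiction: suppose $A:=\{\bar\rho=0\}$ has $|A|>0$. For small $\varepsilon>0$, test the minimality of $\bar\rho$ against the competitor $\rho_\varepsilon:=(1-\varepsilon)\bar\rho+\varepsilon\,\1_A/|A|$. First I would check admissibility: $\rho_\varepsilon\in\Prob(\Omega)$, it equals $\varepsilon/|A|\le1$ on $A$ for $\varepsilon$ small, and it equals $(1-\varepsilon)\bar\rho\le1$ on $\Omega\setminus A$. Then I would estimate the three contributions along $\varepsilon\mapsto\rho_\varepsilon$. The term $\int_\Omega\Psi_N\rho_\varepsilon$ is affine in $\varepsilon$ and varies by $O(\varepsilon)$ since $\Psi_N$ is Lipschitz, hence bounded. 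For the transport term I would invoke the convexity of $\rho\mapsto W_2^2(\mu,\rho)$ along linear interpolations (obtained by averaging the two optimal plans), which gives $W_2^2(\mu,\rho_\varepsilon)\le(1-\varepsilon)W_2^2(\mu,\bar\rho)+\varepsilon W_2^2(\mu,\1_A/|A|)$, hence again an $O(\varepsilon)$ variation as $\Omega$ is bounded. For the entropy, splitting $\Omega$ into $A$ and $\Omega\setminus A$ and using $\bar\rho\ln\bar\rho=0$ on $A$ gives the explicit expression
\begin{equation*}
H(\rho_\varepsilon)-H(\bar\rho)=\varepsilon\ln\varepsilon-\varepsilon\ln|A|+(1-\varepsilon)\ln(1-\varepsilon)-\varepsilon H(\bar\rho)=\varepsilon\ln\varepsilon+O(\varepsilon),
\end{equation*}
where the last equality uses the bound $-\ln|\Omega|\le H(\bar\rho)\le0$. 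Summing up, the functional at $\rho_\varepsilon$ exceeds its value at $\bar\rho$ by at most $\lambda_N\varepsilon\ln\varepsilon+O(\varepsilon)$, which is strictly negative for $\varepsilon$ small since $\lambda_N>0$; this contradicts optimality, so $|A|=0$ and $\bar\rho>0$ a.e.

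I do not anticipate a genuine difficulty, as this is essentially identical to the already-treated interior case; the only adaptation is that the single pair $(\mu,\bar\rho)$ replaces the triple $(\mu,\bar\rho,\nu)$. The one point worth flagging is that the Wasserstein term must be controlled at rate $O(\varepsilon)$, and not merely $O(\sqrt\varepsilon)$, for the infinitesimal entropy gain $\varepsilon\ln\varepsilon$ to dominate it; this is exactly what the convexity of $W_2^2$ in its second slot provides, and it is the reason the competitor is taken as a convex combination rather than an arbitrary nearby measure.
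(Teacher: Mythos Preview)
Your proposal is correct and follows exactly the same approach as the paper: the paper's own proof simply states that the argument is identical to that of Lemma~\ref{lemma_positivity_rho}, which in turn is the argument of \cite[Lemma~3.1]{LS2017} that you have spelled out in detail. The only (cosmetic) difference is that the paper leaves the entropy-versus-$O(\varepsilon)$ competition implicit by pointing to the reference, whereas you reproduce it explicitly; your remark that the key adaptation is replacing the triple $(\mu,\bar\rho,\nu)$ by the pair $(\mu,\bar\rho)$ is precisely what the paper means by ``exactly the same reason''.
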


\begin{proof}
This property holds for exactly the same reason as in Lemma \ref{lemma_positivity_rho}.
\end{proof}

\begin{prop}
\label{proposition_optimality_conditions_boundary}
Let us denote by $\varphi_\mu$ the Kantorovich potential for the transport from $\bar{\rho}$ to $\mu$. There exists $p \in L^1(\Omega)$, positive, such that $\{ p > 0 \} \subset \{ \bar{\rho} = 1 \}$ and a constant $C$ such that  
\begin{equation}
\label{equation_optimality_conditions_boundary}
\frac{\varphi_\mu}{\tau} + \Psi_N + p + \lambda_N \ln(\bar{\rho}) =  C.  
\end{equation}
Moreover $p$ and $\ln(\bar{\rho})$ are Lipschitz and $\nabla p \cdot \nabla \ln(\bar{\rho}) = 0$ a.e.
\end{prop}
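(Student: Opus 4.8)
The plan is to mimic, line by line, the proof of Proposition~\ref{proposition_optimality_conditions}, now applied to the minimization problem solved by the final measure $\bar\rho:=\bar\rho^N_1$, namely the functional
\begin{equation*}
\rho \mapsto \frac{W_2^2(\mu, \rho)}{2 \tau} +  \int_\Omega \Psi_N \rho + \lambda_N H(\rho),
\end{equation*}
where $\mu:=\bar\rho^N_{(N-1)\tau}$. First I would take an arbitrary competitor $\tilde\rho\in\Prob(\Omega)$ with $\tilde\rho\leqslant 1$, form $\rho_\varepsilon:=(1-\varepsilon)\bar\rho+\varepsilon\tilde\rho$, which is admissible since $\rho_\varepsilon\leqslant 1$, and differentiate. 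By the "vertical derivative" property (item~2 of Proposition~\ref{proposition_Kantorovicth_potential}), using that $\bar\rho>0$ a.e.\ by Lemma~\ref{lemma_positivity_rho_boundary} so that the Kantorovich potential $\varphi_\mu$ from $\bar\rho$ to $\mu$ is unique up to a constant, the transport term contributes $\int_\Omega (\varphi_\mu/\tau)(\tilde\rho-\bar\rho)$; the linear term in $\Psi_N$ is immediate; and the entropy term satisfies, exactly as in \cite[Proposition 3.2]{LS2017}, $\limsup_{\varepsilon\to0}\big(H(\rho_\varepsilon)-H(\bar\rho)\big)/\varepsilon \leqslant \int_\Omega \ln(\bar\rho)(\tilde\rho-\bar\rho)$. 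Combining these gives $\int_\Omega h\,(\tilde\rho-\bar\rho)\geqslant 0$ for every admissible $\tilde\rho$, where $h:=\varphi_\mu/\tau + \Psi_N + \lambda_N\ln(\bar\rho)$.

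Next I would invoke the same structural lemma as in the interior case (the analogue of \cite[Lemma 3.3]{Maury2010}): the variational inequality $\int_\Omega h(\tilde\rho-\bar\rho)\geqslant 0$ over all $\tilde\rho\leqslant1$ forces the existence of a constant $C$ — the smallest $\tilde C$ with $\Leb(\{h\leqslant\tilde C\})\geqslant 1$ — such that $\bar\rho=1$ on $\{h<C\}$, $\bar\rho\leqslant1$ on $\{h=C\}$, and $\bar\rho=0$ on $\{h>C\}$; the last set is empty by Lemma~\ref{lemma_positivity_rho_boundary}. Setting $p:=(C-h)_+$ yields $p\geqslant 0$, $\{p>0\}\subset\{\bar\rho=1\}$, and the identity \eqref{equation_optimality_conditions_boundary}. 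For the regularity and orthogonality claims I would argue as in Proposition~\ref{proposition_optimality_conditions}: since $p$ is supported in $\{\bar\rho=1\}$ and $\ln(\bar\rho)\leqslant 0$ is supported in $\{\bar\rho<1\}$, one can write $p=\big(C-\varphi_\mu/\tau-\Psi_N\big)_+$ and $\lambda_N\ln(\bar\rho)=-\big(C-\varphi_\mu/\tau-\Psi_N\big)_-$; as $\Psi_N$ is Lipschitz by construction and $\varphi_\mu$ is Lipschitz (being a $c$-transform on a bounded set), both $p$ and $\ln(\bar\rho)$ are Lipschitz, and $\nabla p\cdot\nabla\ln(\bar\rho)=0$ a.e.\ follows from $\nabla f_+=\nabla f\,\1_{f>0}$ a.e.\ for $f\in H^1(\Omega)$.

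There is essentially no new obstacle here relative to the interior proposition: the only structural difference is that the final layer sees a single Wasserstein term $W_2^2(\mu,\cdot)/(2\tau)$ and the zero-order term is $\Psi_N$ rather than $\tau V_N$, so the prefactor is $1/\tau$ instead of $1/\tau^2$ and $\varphi_\nu$ is absent — all of which is purely cosmetic. The one point deserving a line of care is the applicability of the entropy one-sided derivative bound and of the structural lemma in the presence of only one transport cost; but both \cite[Proposition 3.2]{LS2017} and the $L^\infty$-projection argument of \cite[Lemma 3.3]{Maury2010} are insensitive to the number of Wasserstein terms, so the argument transfers verbatim. Hence the proof reduces to: (i) differentiate along $\rho_\varepsilon$; (ii) extract the variational inequality for $h$; (iii) apply the obstacle-type structural lemma to define $C$ and $p$; (iv) read off Lipschitz regularity and the orthogonality relation from the explicit formulas.
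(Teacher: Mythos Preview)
Your proposal is correct and follows exactly the same approach as the paper's own proof, which simply says to repeat the argument of Proposition~\ref{proposition_optimality_conditions} with the single Wasserstein term and $\Psi_N$ in place of $\tau V_N$. The cosmetic differences you identify (prefactor $1/\tau$, absence of $\varphi_\nu$) are precisely those the paper leaves implicit, and your handling of the regularity and orthogonality via the explicit formulas $p=(C-\varphi_\mu/\tau-\Psi_N)_+$, $\lambda_N\ln\bar\rho=-(C-\varphi_\mu/\tau-\Psi_N)_-$ matches the interior case verbatim.
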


\begin{proof}
We use exactly the same competitor as in the proof of Proposition \ref{proposition_optimality_conditions}. It leads to the conclusion that $\int_\Omega h (\tilde{\rho} - \bar{\rho}) \geqslant 0$ for any $\tilde{\rho} \in \Prob(\Omega)$ with $\tilde{\rho} \leqslant 1$ where $h$ is defined as 
\begin{equation*}
h := \frac{\varphi_\mu}{\tau} + \Psi_N + \lambda_N \ln (\bar{\rho}).
\end{equation*}
It implies the existence of a constant $C$ such that \eqref{equation_constant_pressure} holds, and we define $p$ exactly in the same way, as $p := (C-h)_+$. The integrability properties of $p$ and $\ln (\bar{\rho})$ are derived in the same way as in the proof of Proposition \ref{proposition_optimality_conditions}. 
\end{proof}

\noindent The additional regularity for $\varphi_\mu$ is exactly the same than for the interior case (this is why we have also used an entropic penalization at the boundary).

\begin{lm}
The Kantorovich potential $\varphi_\mu$ belongs to $C^{2,\alpha}(\mathring{\Omega}) \cap C^{1, \alpha}(\Omega)$.
\end{lm}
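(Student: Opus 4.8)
The plan is to reproduce verbatim the argument of Lemma \ref{lemma_regularity_KP}, the only work being to identify the two marginal measures whose regularity must be invoked. Recall that, by Caffarelli's regularity theory for the Monge--Amp\`ere equation in the form of \cite[Theorem 4.14]{Villani2003}, the Kantorovich potential between two probability measures supported on the smooth bounded convex domain $\Omega$ belongs to $C^{2,\alpha}(\mathring{\Omega}) \cap C^{1,\alpha}(\Omega)$ as soon as both measures have densities that are H\"older-continuous on $\Omega$ and bounded away from $0$ and $+\infty$. Here the two measures at hand are $\bar{\rho} = \bar{\rho}^N_1$ (the source, since $\varphi_\mu$ is the Kantorovich potential from $\bar{\rho}$ to $\mu$) and $\mu = \bar{\rho}^N_{(N-1)\tau}$ (the target), and both are in any case densities bounded above by $1$.

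For the final measure $\bar{\rho}$, the required regularity is already built into Proposition \ref{proposition_optimality_conditions_boundary}: solving the optimality condition \eqref{equation_optimality_conditions_boundary} for $\ln(\bar{\rho})$ and using, exactly as in \eqref{equation_p_function_V}, that $p$ and $\ln(\bar{\rho})$ have disjoint supports, one gets $\ln(\bar{\rho}) = -\lambda_N^{-1}(C - \varphi_\mu/\tau - \Psi_N)_-$; since $\varphi_\mu$ and $\Psi_N$ are Lipschitz, so is $\ln(\bar{\rho})$, whence $\bar{\rho}$ is itself Lipschitz and bounded below by a strictly positive constant. For the measure $\mu = \bar{\rho}^N_{(N-1)\tau}$ I would distinguish two cases. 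If $N \geqslant 2$, then $(N-1)\tau$ is an interior time step and Proposition \ref{proposition_optimality_conditions}, applied with $k = N-1$, yields that $\mu$ has a Lipschitz density bounded from below. If $N = 1$, then $\mu = \bar{\rho}_0$, which we may take smooth and strictly positive thanks to the approximation argument discussed at the beginning of Section 3. In both cases the hypotheses of the Monge--Amp\`ere regularity theorem are met, and the claimed regularity of $\varphi_\mu$ follows.

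I do not expect any genuine obstacle: the argument is routine once one observes that the needed regularity of $\bar{\rho}$ and of $\mu$ has already been established — by Propositions \ref{proposition_optimality_conditions_boundary} and \ref{proposition_optimality_conditions} respectively — without any appeal to boundary information, so no circularity arises. The only point deserving a line of care is precisely this bookkeeping of the order of the arguments, exactly as in the interior situation treated in Lemma \ref{lemma_regularity_KP}.
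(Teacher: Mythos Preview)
Your proposal is correct and follows exactly the approach the paper intends: the paper gives no explicit proof for this lemma, merely remarking that ``the additional regularity for $\varphi_\mu$ is exactly the same than for the interior case (this is why we have also used an entropic penalization at the boundary)'', and your write-up faithfully unpacks that sentence by invoking Propositions \ref{proposition_optimality_conditions_boundary} and \ref{proposition_optimality_conditions} for the regularity of $\bar\rho$ and $\mu$ respectively, then applying \cite[Theorem 4.14]{Villani2003}. Your extra care about the degenerate case $N=1$ and about the non-circularity of the argument is welcome but not strictly needed.
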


\begin{theo}
\label{theorem_estimate_fundamental_boundary}
For any $m \geqslant 1$, the following inequality holds: 
\begin{equation}
\label{equation_estimate_fundamental_boundary}
\int_\Omega \nabla (p^m) \cdot \nabla (p+ \Psi_N) \leqslant 0. 
\end{equation}
\end{theo}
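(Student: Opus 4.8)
The strategy is to replay the argument of Theorem~\ref{theorem_estimate_fundamental}, but now only one optimal transport map is available (the one from $\bar\rho$ to $\mu$, since $\bar\rho$ is the \emph{last} measure in the discrete curve). First I would introduce the discrete analogue of the convective derivative appearing in \eqref{equation_estimate_continuous_final}, namely the scalar field
\begin{equation*}
D(x) := \frac{\ln(\mu(x-\nabla\varphi_\mu(x))) - \ln(\bar\rho(x))}{\tau}
\end{equation*}
defined on $\mathring\Omega$ using the $C^{2,\alpha}$ regularity of $\varphi_\mu$ and the Lipschitz regularity of $\bar\rho$ and $\mu$. The key sign observation is that whenever $\bar\rho(x)=1$, the constraint $\mu(x-\nabla\varphi_\mu(x))\le 1$ forces $D(x)\le 0$; this is the discrete version of ``the l.h.s. of \eqref{equation_estimate_continuous_final} is positive where $\rho_1=1$'', with a sign flip relative to the interior case because here we have a first derivative in time rather than a second one.

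Next I would use the Monge--Amp\`ere equation $\mu(x-\nabla\varphi_\mu(x)) = \bar\rho(x)/\det(\Id - D^2\varphi_\mu(x))$, valid on $\mathring\Omega$ since $(\Id-\nabla\varphi_\mu)\#\bar\rho = \mu$, to rewrite $D(x) = -\ln\det(\Id - D^2\varphi_\mu(x))/\tau$. Diagonalizing $D^2\varphi_\mu$ and applying the convexity inequality $\ln(1-y)\le -y$ gives $D(x) \le \Delta\varphi_\mu(x)/\tau$. Now I multiply by $-p^m$ (note the minus sign, since $D\le 0$ on $\{\bar\rho=1\}$ and $p$ is supported there, so $-p^m D \ge 0$) and integrate over $\Omega$, which coincides with $\mathring\Omega$ up to a negligible set; this yields $\int_\Omega p^m \,\Delta\varphi_\mu/\tau \le 0$. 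Integrating by parts, the boundary term is $\int_{\partial\Omega} p^m (\nabla\varphi_\mu\cdot\nO)/\tau$, and exactly as in Theorem~\ref{theorem_estimate_fundamental} the convexity of $\Omega$ together with $x-\nabla\varphi_\mu(x)\in\Omega$ gives $\nabla\varphi_\mu(x)\cdot\nO(x)\ge 0$ for a.e.\ boundary point, so this term can be dropped to obtain $-\int_\Omega \nabla(p^m)\cdot\nabla\varphi_\mu/\tau \le 0$.

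Finally I substitute the optimality condition \eqref{equation_optimality_conditions_boundary}, which gives $\nabla\varphi_\mu/\tau = -\nabla(\Psi_N + p + \lambda_N\ln\bar\rho)$, to get
\begin{equation*}
\int_\Omega \nabla(p^m)\cdot\nabla(p + \Psi_N + \lambda_N\ln\bar\rho) \le 0,
\end{equation*}
and then drop the entropy term using $\nabla p\cdot\nabla\ln\bar\rho = 0$ a.e.\ (from Proposition~\ref{proposition_optimality_conditions_boundary}), exactly as at the end of the interior proof, arriving at \eqref{equation_estimate_fundamental_boundary}. I do not expect any genuine obstacle here: the proof is strictly simpler than the interior one because there is a single transport map and a first-order rather than second-order discrete derivative, so the only point requiring a little care is keeping track of the sign of $D$ (which flips relative to the interior case) and making sure the minus sign is carried consistently through the multiplication by $p^m$, the integration by parts, and the substitution of the optimality condition.
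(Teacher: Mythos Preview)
Your approach is essentially identical to the paper's: the paper defines $D(x) = [\ln\bar\rho(x) - \ln\mu(x-\nabla\varphi_\mu(x))]/\tau$ (the negative of your $D$), observes $D \geq 0$ on $\{\bar\rho=1\}$, obtains $D \leq -\Delta\varphi_\mu/\tau$ via Monge--Amp\`ere and the convexity inequality, multiplies by $p^m$, integrates by parts handling the boundary term by convexity of $\Omega$, and substitutes the optimality condition.

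There is, however, a sign slip in your convexity step. With your convention $D(x) = -\ln\det(\Id-D^2\varphi_\mu(x))/\tau$, the inequality $\ln(1-y) \leq -y$ gives $\ln\det(\Id-D^2\varphi_\mu) \leq -\Delta\varphi_\mu$, hence $D(x) \geq \Delta\varphi_\mu(x)/\tau$, not $\leq$. As you have written it, the inequality $D \leq \Delta\varphi_\mu/\tau$ combined with $D \leq 0$ on $\{p>0\}$ does \emph{not} yield $\int_\Omega p^m \Delta\varphi_\mu/\tau \leq 0$: you would have two upper bounds on $p^m D$ rather than a sandwich. With the correct direction $D \geq \Delta\varphi_\mu/\tau$, multiplying by $p^m \geq 0$ gives $p^m \Delta\varphi_\mu/\tau \leq p^m D \leq 0$, and then the rest of your argument (integration by parts, dropping the boundary term, substituting \eqref{equation_optimality_conditions_boundary}, dropping the entropy term) goes through verbatim. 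This is precisely the sign-tracking pitfall you anticipated in your closing sentence.
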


\begin{proof}
On the set $\mathring{\Omega}$ we consider the following quantity, which is the analogue of the l.h.s. of \eqref{equation_estimate_continuous_final}: 
\begin{equation*}
D(x) := \frac{\ln( \bar{\rho}(x)  ) - \ln ( \mu( x - \nabla \varphi_\mu(x)  )   )}{\tau}.
\end{equation*}
If $\bar{\rho}(x) = 1$, then by the constraint $\mu(x - \nabla \varphi_\mu(x)) \leqslant 1$ the quantity $D(x)$ is positive. On the other hand, exactly by the same estimate than in the proof of Theorem \ref{theorem_estimate_fundamental}, 
\begin{equation*}
D(x) \leqslant - \frac{(\Delta \varphi_\mu)(x)}{\tau}.
\end{equation*} 
We multiply this inequality by $p^m$, do an integration by parts (the boundary term is handled exactly as in Theorem \ref{theorem_estimate_fundamental}), and we end up with \eqref{equation_estimate_fundamental_boundary}.  
\end{proof}

\begin{crl}
\label{corollary_bound_discrete_boundary}
There holds $\| \nabla p \|_{L^2(\Omega)} \leqslant \| \nabla \Psi \|_{L^2(\Omega)}$. Moreover, if $\Psi \in W^{1,q}(\Omega)$ with $q > d$, then $p \in L^\infty(\Omega)$ and $\| p \|_{L^\infty(\Omega)}$ is bounded by a constant which depends only on $\Omega$ and $\| \nabla \Psi \|_{L^q(\Omega)}$.
\end{crl}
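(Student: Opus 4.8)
The plan is to deduce this corollary from Lemma~\ref{lemma_Moser_iterations} in exactly the same way Corollary~\ref{corollary_bound_discrete} was deduced from the interior estimate, simply replacing the interior pressure by the final-time pressure and $V_N$ by $\Psi_N$. Concretely, I would apply Lemma~\ref{lemma_Moser_iterations} with $f := p$ (the function produced by Proposition~\ref{proposition_optimality_conditions_boundary}) and $W := \Psi_N$, and then let the $N$-independence of the relevant norms take care of passing from $\Psi_N$ to $\Psi$.

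First I would check the hypotheses of Lemma~\ref{lemma_Moser_iterations}. Both $p$ and $\Psi_N$ are Lipschitz on $\Omega$: for $p$ this is part of the statement of Proposition~\ref{proposition_optimality_conditions_boundary}, and for $\Psi_N$ it is built into the construction of the regularized final cost. The function $p$ vanishes on $\{\bar{\rho}<1\}$, since $\{p>0\}\subset\{\bar{\rho}=1\}$; and because $\bar{\rho}$ is a probability density with $\bar{\rho}\le 1$ a.e. on a domain with $|\Omega|>1$, one has $|\{\bar{\rho}=1\}|=\int_{\{\bar{\rho}=1\}}\bar{\rho}\le\int_\Omega\bar{\rho}=1$, hence $p$ vanishes on a set of measure at least $|\Omega|-1>0$, as required. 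Finally, the differential inequality $\int_\Omega\nabla(p^m)\cdot\nabla(p+\Psi_N)\le 0$ for every $m\ge 1$ is precisely \eqref{equation_estimate_fundamental_boundary}, established in Theorem~\ref{theorem_estimate_fundamental_boundary}.

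With the hypotheses verified, Lemma~\ref{lemma_Moser_iterations} yields at once $\|\nabla p\|_{L^2(\Omega)}\le\|\nabla\Psi_N\|_{L^2(\Omega)}\le\|\nabla\Psi\|_{L^2(\Omega)}$, where the last inequality is one of the properties imposed on the approximating sequence $\Psi_N$. If moreover $\Psi\in W^{1,q}(\Omega)$ with $q>d$, then $\|\nabla\Psi_N\|_{L^q(\Omega)}$ is bounded independently of $N$ (exactly as for $V_N$ when $V\in W^{1,q}$), so Lemma~\ref{lemma_Moser_iterations} provides $p\in L^\infty(\Omega)$ with $\|p\|_{L^\infty(\Omega)}$ bounded by a constant depending only on $\Omega$ and $\|\nabla\Psi\|_{L^q(\Omega)}$, in particular not on $N$. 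There is essentially no obstacle here: all the analytic work has already been carried out once and for all in Lemma~\ref{lemma_Moser_iterations} and in Theorem~\ref{theorem_estimate_fundamental_boundary}, so the only point requiring a word of care is the $N$-uniformity of the norms of $\Psi_N$, which we may and do assume when choosing the regularization.
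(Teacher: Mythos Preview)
Your proposal is correct and follows exactly the approach of the paper: combine Lemma~\ref{lemma_Moser_iterations} with the estimate \eqref{equation_estimate_fundamental_boundary}, using that $p$ vanishes where $\bar{\rho}<1$ and that the regularized potentials $\Psi_N$ have gradients controlled by those of $\Psi$. You have simply spelled out in detail what the paper states in one sentence.
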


\begin{proof}
Exactly as in the proof of Corollary \ref{corollary_bound_discrete}, it is enough to combine Lemma \ref{lemma_Moser_iterations} and the estimate \eqref{equation_estimate_fundamental_boundary}.
\end{proof}

\section{Convergence to the continuous problem}

Recall that for any $N \geqslant 1$, $\bar{\rho}^{N}$ denotes the solution of the discrete problem.

\subsection{Convergence of the primal problem}

This convergence is very similar to the one performed in \cite{LS2017} hence we will not really reproduce it. Furthermore, as we are ultimately interested in the dual problem, we need only the convergence of the value of the primal problem, not of the minimizers.

Define $\tilde{\A}^N$ on $\Gamma_0^N$ exactly as the discrete primal functional $\A^N$, but where the regularized potentials $V_N$ and $\Psi_N$ are replaced by the true potentials $V$ and $\Psi$. Given the $L^\infty$ bound on $\rho$ (which holds if $\A^N$ or $\tilde{\A}^N$ are finite), one can see that for any $\rho \in \Gamma^N_0$ with density bounded by $1$,
\begin{equation}
\label{equation_difference_V_VN}
\left| \A^{N}(\rho) - \tilde{\A}^N(\rho) \right| \leqslant \| V - V_N \|_{L^1(\Omega)} + \| \Psi - \Psi_N \|_{L^1(\Omega)},
\end{equation}
and the r.h.s. goes to $0$ uniformly in $\rho$ as $N \to + \infty$.

On the other hand, using exactly the same proofs as in \cite{LS2017}, Section 5.1 and 5.2, one can easily check (the only thing to check is that all the constructions are compatible with the constraint of having a density bounded by $1$ but it is straightforward) that the value of the discrete problem 
\begin{equation*}
\min \left\{ \tilde{A}^N(\rho) \ : \ \rho \in \Gamma^N_0 \right\}
\end{equation*}
converges to the minimal value of the primal problem (notice that it is for this result that we need the scale $\lambda_N$ of the entropic penalization to go to $0$). Combined with \eqref{equation_difference_V_VN}, one can conclude the following. 
\begin{prop}
\label{proposition_convergence_primal}
The value of the discrete problem converges to the one of the continuous one in the sense that
\begin{equation*}
\lim_{N \to + \infty} \A^{N}(\bar{\rho}^{N})  = \min \left\{ \A(\rho) \ : \ \rho \in \Gamma_0 \right\}.
\end{equation*}
\end{prop}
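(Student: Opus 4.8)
The plan is to establish the convergence of values by a two-sided inequality: $\limsup_N \A^N(\bar\rho^N) \leq \min \A$ and $\liminf_N \A^N(\bar\rho^N) \geq \min \A$. The bound \eqref{equation_difference_V_VN} already reduces everything to the modified functional $\tilde\A^N$, so it suffices to show $\min\{\tilde\A^N(\rho):\rho\in\Gamma_0^N\}\to\min\A$ and then invoke that bound plus the convergence $V_N\to V$, $\Psi_N\to\Psi$ in $H^1\subset L^1$.

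For the upper bound, I would start from an optimal (or near-optimal) continuous curve $\rho^\ast\in\Gamma_0$ for the primal problem and construct a discrete competitor $\rho^N\in\Gamma_0^N$ by sampling $\rho^\ast$ at the times $k\tau$, possibly after a small regularization/mollification in space to ensure the density stays bounded by $1$ and the entropy term stays controlled (this is exactly the kind of construction carried out in \cite{LS2017}, Sections 5.1–5.2, and the point to check is that convolving with a kernel, or interpolating with the uniform measure, preserves $\rho\leq 1$ — which it does since $1$ is the maximal admissible density and these operations are order-preserving on densities). The discrete kinetic term $\sum_k W_2^2(\rho^N_{k\tau},\rho^N_{(k+1)\tau})/(2\tau)$ converges to $\int_0^1\frac12|\dot\rho^\ast_t|^2\,dt$ by the representation formula \eqref{equation_representation_A_sup} (the discrete sums are lower Riemann-type sums for the metric action, up to the regularization error which can be made arbitrarily small); the potential terms converge by weak-$\ast$ continuity of $\rho\mapsto\int V\rho$ and $\rho\mapsto\int\Psi\rho$ against continuous $V,\Psi$ — here $V,\Psi\in H^1(\Omega)$ need not be continuous, but they are in $L^1$ and the densities are uniformly bounded, so one argues by $L^1$ approximation of $V,\Psi$ by continuous functions; and the entropy terms contribute at most $O(\lambda_N\cdot\log|\Omega|)\to 0$ since $-\log|\Omega|\leq H(\rho)\leq 0$ on admissible densities and there are $O(N)$ of them each weighted by $\tau=1/N$, so the total entropic contribution is $O(\lambda_N)$.

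For the lower bound, I would take the discrete minimizers $\bar\rho^N$ and build from them a continuous curve by geodesic interpolation in $(\Prob(\Omega),W_2)$ between consecutive $\bar\rho^N_{k\tau}$; call it $\rho^N\in\Gamma$. Uniform bounds on $\tilde\A^N(\bar\rho^N)$ (obtained by comparison with the constant curve $\equiv\bar\rho_0$) give a uniform bound on $\sum_k W_2^2(\bar\rho^N_{k\tau},\bar\rho^N_{(k+1)\tau})/\tau$, which translates into $2$-absolute continuity of the interpolants with a uniform $L^2$ bound on $a$; by Arzelà–Ascoli in $(\Gamma,d_\Gamma)$ a subsequence converges uniformly to some $\rho^\infty\in\Gamma_0$ with $\rho^\infty_t\leq 1$ for all $t$ (the constraint passes to the limit under weak-$\ast$ convergence). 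Lower semicontinuity of the metric action along $W_2$-converging curves, together with convergence of the potential terms and nonpositivity/vanishing of the entropy contribution, yields $\liminf_N\tilde\A^N(\bar\rho^N)\geq\A(\rho^\infty)\geq\min\A$. Combining with \eqref{equation_difference_V_VN} finishes the proof.

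The main obstacle — though it is somewhat hidden because the paper defers to \cite{LS2017} — is handling the discretization of the kinetic energy cleanly in both directions simultaneously: for the upper bound one needs the discrete sum to not overshoot the continuous action (controlled via \eqref{equation_representation_A_sup}, which expresses the continuous action as a supremum over partitions, hence each discrete sum is $\leq$ it for the sampled curve, modulo the regularization needed to fit the $\rho\leq 1$ constraint), while for the lower bound one needs the metric action of the geodesic interpolant to be $\leq$ the discrete sum with asymptotically matching constants, plus lower semicontinuity under $d_\Gamma$-convergence. The interplay with the density constraint $\rho\leq 1$ is the genuinely new point relative to \cite{LS2017}, but as noted it is benign: all the interpolation and regularization operations used are monotone on densities and therefore preserve the bound by $1$, and the admissible set being weak-$\ast$ closed makes the constraint stable in the limit. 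The entropic penalization, being $O(\lambda_N)$ in total and of a favorable sign, causes no difficulty beyond requiring $\lambda_N\to 0$.
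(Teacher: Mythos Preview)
Your proposal is correct and follows essentially the same route as the paper: reduce from $\A^N$ to $\tilde\A^N$ via \eqref{equation_difference_V_VN}, then establish convergence of $\min\tilde\A^N$ to $\min\A$ by the two-sided sampling/interpolation argument of \cite[Sections 5.1--5.2]{LS2017}, checking that the constructions there preserve the constraint $\rho\leq 1$ and that the entropic penalization contributes only $O(\lambda_N)$. The paper's own proof is terser (it simply defers to \cite{LS2017} and notes the constraint compatibility), but the content is the same; your only superfluous remark is the hedge about regularizing to keep densities $\leq 1$ --- sampling an admissible curve already gives admissible samples, and the entropy is automatically bounded on $\{\rho\leq 1\}$, so no regularization is needed in the upper bound here.
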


\subsection{Convergence to the dual problem}

In this subsection, we want to build a value function $\phi^N$ which will go, as $N \to + \infty$, to a solution of the (continuous) dual problem. Notice that the discrete functional $\A^N$ is convex, hence we could consider discrete dual problem but we will not do it explicitly: indeed, the approximate value function $\phi^N$ will not be a solution of the discrete dual problem and we will not prove a duality result at the discrete level. 

On the contrary, we will just guess the expression of $\phi^N$ (we have to say to the inspiration for this kind of construction was found in the work of Loeper \cite{Loeper2006}) and use the explicit expression to prove that the value of some quantity which looks like the continuous dual objective, evaluated at $\phi^N$, is closed to the value of the discrete primal problem. Then, sending $N$ to $+ \infty$, we recover an admissible $(\bar{\phi}, \bar{P})$ for the continuous dual problem such that $\B(\bar{\phi}, \bar{P})$ is larger than the optimal value of the continuous primal problem (and this comes from estimates proved at the discrete level). It will allow us to conclude that $(\bar{\phi}, \bar{P})$ is a solution of the dual problem thanks to the absence of duality gap at the continuous level. Eventually, we pass to the limit the discrete estimates in Corollary \ref{corollary_bound_discrete} and Corollary \ref{corollary_bound_discrete_boundary} to get the ones for $\bar{p}$ and $\bar{P}_1$.

%

Let us recall that $\bar{\rho}^{N}$ is the solution of the discrete problem. For any $k \in \{ 0,1, \ldots, N-1 \}$, we choose $(\varphi^{N}_{k \tau}, \psi^{N}_{k \tau})$ a pair of Kantorovich potential between $\bar{\rho}^{N}_{k \tau}$ and $\bar{\rho}^{N}_{(k+1) \tau}$, such choice is unique up to an additive constant. According to Proposition \ref{proposition_optimality_conditions} and Proposition \ref{proposition_optimality_conditions_boundary}, making the dependence on $N$ and $k$ explicit, for any $k \in \{ 1,2, \ldots, N \}$, there exists a pressure $p^{N}_{k \tau}$ positive and Lipschitz, and a constant $C^{N}_{k \tau}$ such that
\begin{equation}
\label{equation_optimality_conditions_indexed}
\begin{cases}
\dst{ \frac{\psi^{N}_{(k-1) \tau} + \varphi^{N}_{k \tau}}{\tau^2} + V_N + p^{N}_{k \tau} + \lambda_N \ln(\bar{\rho}^{N}_{k \tau}) =  C^{N}_{k \tau}  } & k \in \{ 1,2, \ldots, N-1 \}, \\
\dst{ \frac{\psi^{N}_{(k-1) \tau} }{\tau} + \Psi_N + p^{N}_1 + \lambda_N \ln(\bar{\rho}^{N}_{k \tau}) =  C^{N}_{1}  } & k = N. 
\end{cases}
\end{equation} 

We define the following value function, defined on the whole interval $[0,1]$ which can be thought as a function which looks like a solution of what could be called a discrete dual problem.

\begin{defi}
\label{definition_value_function_approximate}
Let $\phi^{N} \in \BV([0,1] \times \Omega) \cap L^2([0,1] \times H^1(\Omega))$ the function defined as follows. The "final" value is given by 
\begin{equation}
\label{equation_value_function_boundary}
\phi^{N}(1^-,\cdot) := \Psi_N +  p^{N}_{1}.
\end{equation}
Provided that the value $\phi^{N}((k \tau)^-, \cdot)$ is defined for some $k \in \{ 1,2, \ldots, N \}$, the value of $\phi^{N}$ on $((k-1) \tau, k \tau ) \times \Omega$ is defined by 
\begin{equation}
\label{equation_value_function_inside}
\phi^{N}(t,x) := \inf_{y \in \Omega} \left( \frac{|x-y|^2}{2 (k \tau - t)} + \phi^{N}((k \tau)^-, y) \right).
\end{equation}
If $k \in \{ 1,2, \ldots, N-1 \}$, the function $\phi^{N}$ has a temporal jump at $t = k \tau$ defined by 
\begin{equation}
\label{equation_value_function_jump}
\phi^{N}((k \tau)^-, x) := \phi^{N}((k \tau)^+, x) + \tau \left( V_N + p^{N}_{k \tau} \right)(x) 
\end{equation}
\end{defi}

\noindent Notice that we have not included the entropic term: its only effect would have been to decrease $\phi^{N}$ (which in the end decreases the value of the dual functional) and it would have prevented us from getting compactness on the sequence $\phi^{N}$. The link between this value function and the Kantorovich potentials is the following.

\begin{lm}
\label{lemma_link_value_potential}
For any $k \in \{ 1,2, \ldots, N \}$, one has 
\begin{equation}
\label{equation_link_value_potential_-}
\phi^{N}((k \tau)^-, \cdot) \geqslant C^{N}_1 + \tau \sum_{j=k}^{N-1} C^{N}_{j \tau}  - \frac{\psi^{N}_{(k-1) \tau}}{\tau}.
\end{equation}
For any $k \in \{ 0,1, \ldots, N-1\}$, one has 
\begin{equation}
\label{equation_link_value_potential_+}
\phi^{N}((k \tau)^+, \cdot) \geqslant C^{N}_1 + \tau \sum_{j=k+1}^{N-1} C^{N}_{j \tau}  + \frac{\varphi^{N}_{k \tau}}{\tau}.
\end{equation}
\end{lm}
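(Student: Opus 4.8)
The plan is to prove \eqref{equation_link_value_potential_-} and \eqref{equation_link_value_potential_+} simultaneously by a single backward induction on $k$, descending from $k=N$ to $k=0$. Each step of the induction will consist of two halves. The \emph{inf-convolution half} turns a lower bound for $\phi^{N}$ at $(k\tau)^-$ into a lower bound at $((k-1)\tau)^+$, i.e.\ it produces \eqref{equation_link_value_potential_+} at index $k-1$; the \emph{jump half} then turns that into a lower bound at $((k-1)\tau)^-$, i.e.\ it produces \eqref{equation_link_value_potential_-} at index $k-1$. Since $\Psi_N$, $V_N$, the pressures $p^{N}_{k\tau}$ and the Kantorovich potentials are all Lipschitz by Propositions~\ref{proposition_optimality_conditions} and \ref{proposition_optimality_conditions_boundary} and Lemma~\ref{lemma_regularity_KP}, every infimum below is finite; I use the convention that an empty sum equals $0$.

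For the base case $k=N$ I would note that \eqref{equation_value_function_boundary} gives $\phi^{N}((N\tau)^-,\cdot)=\Psi_N+p^{N}_{1}$, and the second line of \eqref{equation_optimality_conditions_indexed} rewrites this as $C^{N}_{1}-\psi^{N}_{(N-1)\tau}/\tau-\lambda_N\ln(\bar{\rho}^{N}_{N\tau})$; as $\bar{\rho}^{N}_{N\tau}\leqslant 1$ the entropic term is $\geqslant 0$, which is precisely \eqref{equation_link_value_potential_-} for $k=N$ with its empty sum. For the inductive step, I assume \eqref{equation_link_value_potential_-} holds at some $k\in\{1,\dots,N\}$. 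In the inf-convolution half: by \eqref{equation_value_function_inside} and continuity of the Hopf--Lax formula, $\phi^{N}(((k-1)\tau)^+,x)=\inf_{y\in\Omega}\bigl(\frac{|x-y|^2}{2\tau}+\phi^{N}((k\tau)^-,y)\bigr)$; I plug in the induction hypothesis, pull the additive constant out of the infimum, and use the homogeneity $\frac{|x-y|^2}{2\tau}-\frac{\psi(y)}{\tau}=\frac{1}{\tau}\bigl(\frac{|x-y|^2}{2}-\psi(y)\bigr)$ to factor $1/\tau$ out of the remaining infimum, which by item~3 of Proposition~\ref{proposition_Kantorovicth_potential} equals $\varphi^{N}_{(k-1)\tau}(x)$; this is exactly \eqref{equation_link_value_potential_+} at index $k-1$. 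In the jump half (valid when $k\geqslant 2$, so that $k-1\in\{1,\dots,N-1\}$): by \eqref{equation_value_function_jump}, $\phi^{N}(((k-1)\tau)^-,\cdot)=\phi^{N}(((k-1)\tau)^+,\cdot)+\tau(V_N+p^{N}_{(k-1)\tau})$; inserting the bound just obtained and using the first line of \eqref{equation_optimality_conditions_indexed} at index $k-1$ to eliminate $V_N+p^{N}_{(k-1)\tau}$, the two copies of $\varphi^{N}_{(k-1)\tau}/\tau$ cancel, a term $\tau C^{N}_{(k-1)\tau}$ and a term $-\psi^{N}_{(k-2)\tau}/\tau$ emerge, and the remaining $-\tau\lambda_N\ln(\bar{\rho}^{N}_{(k-1)\tau})$ is $\geqslant 0$; absorbing $\tau C^{N}_{(k-1)\tau}$ into the sum yields \eqref{equation_link_value_potential_-} at index $k-1$. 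The base case together with the jump half run for $k=2,\dots,N$ covers all indices of \eqref{equation_link_value_potential_-}, and the inf-convolution half run for $k=1,\dots,N$ covers all indices of \eqref{equation_link_value_potential_+}.

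The step I expect to need the most care --- bookkeeping rather than a genuine difficulty --- is making the inf-convolution land \emph{exactly} on $\varphi^{N}_{(k-1)\tau}/\tau$: the essential observation is the homogeneity displayed above, which lets the scaling constant $1/\tau$ factor cleanly out of the infimum so that what is left is precisely the $c$-transform of $\psi^{N}_{(k-1)\tau}$ recorded in Proposition~\ref{proposition_Kantorovicth_potential}. The remaining things to keep track of are the empty-sum conventions at the endpoints ($k=N$ in \eqref{equation_link_value_potential_-}, $k=N-1$ in \eqref{equation_link_value_potential_+}) and the fact that every entropic contribution, of the form $-\lambda_N\ln\bar{\rho}^{N}_{\bullet}$ with $\bar{\rho}^{N}_{\bullet}\leqslant 1$, carries the favorable sign --- which is exactly the observation, made right after Definition~\ref{definition_value_function_approximate}, that omitting the entropy only decreases $\phi^{N}$.
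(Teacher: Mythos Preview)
Your proof is correct and follows essentially the same approach as the paper: a backward induction starting from the terminal identity \eqref{equation_value_function_boundary} combined with the boundary optimality condition, alternating an inf-convolution step (using the $c$-transform relation of Proposition~\ref{proposition_Kantorovicth_potential}) and a jump step (using \eqref{equation_optimality_conditions_indexed} and the sign of $-\lambda_N\ln\bar\rho$). The only cosmetic difference is that you package both halves into a single induction step $k\to k-1$, whereas the paper phrases them as two separate implications ``\eqref{equation_link_value_potential_-} at $k$ $\Rightarrow$ \eqref{equation_link_value_potential_+} at $k-1$'' and ``\eqref{equation_link_value_potential_+} at $k$ $\Rightarrow$ \eqref{equation_link_value_potential_-} at $k$''; the content is identical.
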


\begin{proof}
We will prove it by (decreasing) induction on $k \in \{ 0,1, \ldots, N \}$. For $k = N$, by the optimality conditions \eqref{equation_optimality_conditions_indexed} and the fact that $\ln(\bar{\rho}^{N}_1) \leqslant 0$, it is clear that \eqref{equation_link_value_potential_-} holds. 

Now assume that \eqref{equation_link_value_potential_-} holds for some $k$. Using \eqref{equation_value_function_inside}, one has 
\begin{align*}
\phi^{N}(((k-1) \tau)^+,x) & = \inf_{y \in \Omega} \left( \frac{|x-y|^2}{2 \tau} + \phi^{N}((k \tau)^-, y) \right) \\
& \geqslant C^{N}_1 + \tau \sum_{j=k}^{N-1} C^{N}_{j \tau} + \inf_{y \in \Omega} \left( \frac{|x-y|^2}{2 \tau} - \frac{\psi^{N}_{(k-1) \tau}(y)}{\tau}  \right) \\
& = C^{N}_1 + \tau \sum_{j=k}^{N-1} C^{N}_{j \tau} + \frac{1}{\tau} \inf_{y \in \Omega} \left( \frac{|x-y|^2}{2} - \psi^{N}_{(k-1) \tau}(y)  \right)
=  C^{N}_1 + \tau \sum_{j=k}^{N-1} C^{N}_{j \tau} + \frac{\varphi^{N}_{(k-1)\tau}(x)}{\tau},  
\end{align*}   
where the last inequality comes from the fact that $\varphi^{N}_{(k-1)\tau}$ is the $c$-transform of $\psi^{N}_{(k-1) \tau}$. This gives us \eqref{equation_link_value_potential_+} for $k-1$. On the other hand, assume that \eqref{equation_link_value_potential_+} holds for some $k$. Using \eqref{equation_value_function_jump} and the optimality conditions \eqref{equation_optimality_conditions_indexed} , 
\begin{align*}
\phi^{N}((k \tau)^-, x) & = \phi^{N}((k \tau)^+, x) + \tau \left( V_N + p^{N}_{k \tau} \right) \\ 
& \geqslant C^{N}_1 + \tau \sum_{j=k+1}^{N-1} C^{N}_{j \tau}  + \frac{\varphi^{N}_{k \tau}}{\tau} + \tau \left( V_N + p^{N}_{k \tau}  \right) \\
& = C^{N}_1 + \tau \sum_{j=k+1}^{N-1} C^{N}_{j \tau}  + C^{N}_{k \tau} - \frac{\psi^{N}_{k \tau}}{\tau} - \lambda_N \tau \ln(\bar{\rho}^{N}_{k \tau}) \geqslant C^{N}_1 + \tau \sum_{j=k}^{N-1} C^{N}_{j \tau}  - \frac{\psi^{N}_{k \tau}}{\tau}, 
\end{align*}  
which means that \eqref{equation_link_value_potential_-} holds for $k$. 
\end{proof}

\noindent From this identity, we can express some kind of duality result at the discrete level, which reads as follows. 

\begin{prop}
\label{proposition_duality_discrete}
For $N \geqslant 1$, the following inequality holds: 
\begin{equation}
\label{equation_duality_discrete}
\A^{N}(\bar{\rho}^{N}) \leqslant \int_\Omega \phi^{N}(0^+, \cdot) \bar{\rho}_0 - \tau \sum_{k=1}^{N-1} \int_\Omega p^{N}_{k \tau} - \int_\Omega p^{N}_1.
\end{equation}
\end{prop}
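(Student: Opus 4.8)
The plan is to expand $\A^{N}(\bar{\rho}^{N})$ term by term, rewrite each Wasserstein square via Kantorovich duality (item 1 of Proposition~\ref{proposition_Kantorovicth_potential}), substitute the optimality conditions \eqref{equation_optimality_conditions_indexed} to eliminate the potentials in favour of $V_N$, $\Psi_N$, the pressures $p^{N}_{k\tau}$ and the constants $C^{N}_{k\tau}$, and finally use Lemma~\ref{lemma_link_value_potential} at $k=0$ to trade $\varphi^{N}_0$ for $\phi^{N}(0^+,\cdot)$. First I would write, for each $k$, $\tfrac{1}{2}W_2^2(\bar{\rho}^{N}_{k\tau},\bar{\rho}^{N}_{(k+1)\tau}) = \int_\Omega \varphi^{N}_{k\tau}\,\bar{\rho}^{N}_{k\tau} + \int_\Omega \psi^{N}_{k\tau}\,\bar{\rho}^{N}_{(k+1)\tau}$, divide by $\tau$, sum over $k$, and reindex the second family of integrals so as to regroup the potentials against each fixed $\bar{\rho}^{N}_{k\tau}$; this produces
\[
\sum_{k=0}^{N-1}\frac{W_2^2(\bar{\rho}^{N}_{k\tau},\bar{\rho}^{N}_{(k+1)\tau})}{2\tau} = \frac{1}{\tau}\int_\Omega \varphi^{N}_0\,\bar{\rho}_0 + \frac{1}{\tau}\sum_{k=1}^{N-1}\int_\Omega (\psi^{N}_{(k-1)\tau}+\varphi^{N}_{k\tau})\,\bar{\rho}^{N}_{k\tau} + \frac{1}{\tau}\int_\Omega \psi^{N}_{(N-1)\tau}\,\bar{\rho}^{N}_1 .
\]

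Then, for $1\le k\le N-1$, multiplying the first line of \eqref{equation_optimality_conditions_indexed} by $\tau$ and integrating against the probability measure $\bar{\rho}^{N}_{k\tau}$ gives $\tfrac{1}{\tau}\int_\Omega(\psi^{N}_{(k-1)\tau}+\varphi^{N}_{k\tau})\bar{\rho}^{N}_{k\tau} = \tau C^{N}_{k\tau} - \tau\int_\Omega V_N \bar{\rho}^{N}_{k\tau} - \tau\int_\Omega p^{N}_{k\tau}\bar{\rho}^{N}_{k\tau} - \tau\lambda_N H(\bar{\rho}^{N}_{k\tau})$; the second line of \eqref{equation_optimality_conditions_indexed} gives $\tfrac{1}{\tau}\int_\Omega\psi^{N}_{(N-1)\tau}\bar{\rho}^{N}_1 = C^{N}_1 - \int_\Omega\Psi_N\bar{\rho}^{N}_1 - \int_\Omega p^{N}_1\bar{\rho}^{N}_1 - \lambda_N H(\bar{\rho}^{N}_1)$; and Lemma~\ref{lemma_link_value_potential} (equation \eqref{equation_link_value_potential_+} with $k=0$), integrated against $\bar{\rho}_0\in\Prob(\Omega)$, gives $\tfrac{1}{\tau}\int_\Omega\varphi^{N}_0\bar{\rho}_0 \le \int_\Omega\phi^{N}(0^+,\cdot)\bar{\rho}_0 - C^{N}_1 - \tau\sum_{j=1}^{N-1}C^{N}_{j\tau}$. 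Summing these three identities/inequalities, the constants cancel ($-C^{N}_1$ against the boundary $C^{N}_1$, and $-\tau\sum_{j=1}^{N-1}C^{N}_{j\tau}$ against $\tau\sum_{k=1}^{N-1}C^{N}_{k\tau}$), while $-\tau\sum_{k=1}^{N-1}\int_\Omega V_N\bar{\rho}^{N}_{k\tau} - \int_\Omega\Psi_N\bar{\rho}^{N}_1$ and $-\tau\lambda_N\sum_{k=1}^{N-1}H(\bar{\rho}^{N}_{k\tau}) - \lambda_N H(\bar{\rho}^{N}_1)$ are precisely the negatives of the potential part and the entropic part of $\A^{N}$. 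Adding those parts of $\A^{N}(\bar{\rho}^{N})$ back, everything cancels except
\[
\A^{N}(\bar{\rho}^{N}) \le \int_\Omega\phi^{N}(0^+,\cdot)\,\bar{\rho}_0 - \tau\sum_{k=1}^{N-1}\int_\Omega p^{N}_{k\tau}\,\bar{\rho}^{N}_{k\tau} - \int_\Omega p^{N}_1\,\bar{\rho}^{N}_1 .
\]
Since $\{p^{N}_{k\tau}>0\}\subset\{\bar{\rho}^{N}_{k\tau}=1\}$ by Propositions~\ref{proposition_optimality_conditions} and~\ref{proposition_optimality_conditions_boundary}, one has $p^{N}_{k\tau}\bar{\rho}^{N}_{k\tau}=p^{N}_{k\tau}$ and $p^{N}_1\bar{\rho}^{N}_1=p^{N}_1$ a.e., which turns the last display into \eqref{equation_duality_discrete}.

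The computation is essentially bookkeeping, so the main potential obstacle is purely one of careful accounting: getting the reindexing that regroups the Kantorovich potentials right, and checking that the constants $C^{N}_{k\tau}$ and the entropy terms cancel exactly. This is exactly where the precise definition \eqref{equation_value_function_boundary}--\eqref{equation_value_function_jump} of $\phi^{N}$, and in particular the deliberate omission of the entropic term from $\phi^{N}$, is used: keeping it would only decrease $\phi^{N}$ and weaken the bound, but would also spoil the clean cancellation. One should also note that the single inequality invoked, namely Lemma~\ref{lemma_link_value_potential} at $k=0$, points in the favourable direction because $\bar{\rho}_0\ge0$ and $\varphi^{N}_0/\tau$ sits on the side that is bounded from above.
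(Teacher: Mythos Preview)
Your proof is correct and follows essentially the same route as the paper: express the Wasserstein terms via Kantorovich duality, reindex so that each $\bar{\rho}^{N}_{k\tau}$ sees the combination $\psi^{N}_{(k-1)\tau}+\varphi^{N}_{k\tau}$, substitute the optimality conditions \eqref{equation_optimality_conditions_indexed}, and finally invoke \eqref{equation_link_value_potential_+} at $k=0$ to pass from $\varphi^{N}_0/\tau$ to $\phi^{N}(0^+,\cdot)$. Your observation that $p^{N}_{k\tau}\bar{\rho}^{N}_{k\tau}=p^{N}_{k\tau}$ a.e.\ and your remark that the single inequality (coming from dropping the nonpositive $\lambda_N\ln(\bar{\rho})$ in Lemma~\ref{lemma_link_value_potential}) points in the right direction are exactly the points the paper uses.
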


\noindent We have an inequality and not an equality because we have not included the entropic terms in the value function. 

\begin{proof}
The idea is to evaluate $\A^{N}(\bar{\rho}^{N})$ by expressing the Wasserstein distances with the help of the Kantorovich potentials.
\begin{align*}
\A^{N}(\bar{\rho}^{N}) & = \sum_{k=0}^{N-1} \frac{W_2^2(\bar{\rho}^{N}_{k \tau}, \bar{\rho}^{N}_{(k+1) \tau})}{\tau} + \sum_{k=1}^{N-1} \tau \left( \int_\Omega V_N \bar{\rho}^{N}_{k \tau} + \lambda_N H(\bar{\rho}^{N}_{k \tau})  \right) + \int_\Omega \Psi_N  \bar{\rho}^{N}_{1} + \lambda_N H(\bar{\rho}^{N}_1) \\
& = \frac{1}{\tau} \sum_{k=0}^{N-1} \left( \int_\Omega \varphi^{N}_{k \tau} \bar{\rho}^{N}_{k \tau} + \int_\Omega \psi^{N}_{k \tau} \bar{\rho}^{N}_{(k+1) \tau} \right)   + \sum_{k=1}^{N-1} \tau \left( \int_\Omega V_N  \bar{\rho}^{N}_{k \tau} + \lambda_N H(\bar{\rho}^{N}_{k \tau})  \right) + \int_\Omega \Psi_N  \bar{\rho}^{N}_{1} + \lambda_N H(\bar{\rho}^{N}_1) \\
& = \frac{1}{\tau} \int_\Omega \varphi^{N}_0 \bar{\rho}_0 + \sum_{k=1}^{N-1} \int_\Omega \left( \frac{\varphi^{N}_{k \tau} + \psi^{N}_{(k-1) \tau}}{2 \tau} + \tau(V_N + \lambda_N \ln (\bar{\rho}^{N}_{k \tau})) \right) \bar{\rho}^{N}_{k \tau} + \int_\Omega \left( \frac{\psi^{N}_{(N-1) \tau}}{2 \tau} + \Psi_N + \lambda_N \ln (\bar{\rho}^{N}_{1}) \right)  \bar{\rho}^{N}_1,
\end{align*}
where the last equality comes from a reindexing of the sums. Now we use the optimality conditions \eqref{equation_optimality_conditions_indexed} to handle the second and third term. Notice that, as $p^{N}_{k \tau}$ lives only where $\bar{\rho}^{N}_{k \tau} = 1$, that we can replace $\bar{\rho}^{N}_{k \tau}$ by $1$ when it is multiplied by the pressure. Recall also that the probability distributions, when integrated against a constant, are equal to this constant. We are left with 
\begin{align*}
\A^{N}(\bar{\rho}^{N}) & = \frac{1}{\tau} \int_\Omega \varphi^{N}_0 \bar{\rho}_0 + \sum_{k=1}^{N-1} \left( C^{N}_{k \tau} - \tau \int_\Omega p^{N}_{k \tau} \right)  +  \left( C^{N}_1 - \int_\Omega p^{N}_1 \right) \\
& \leqslant \int_\Omega \phi^{N}(0^+, \cdot) \bar{\rho}_0 - \tau \sum_{k=1}^{N-1} \int_\Omega p^{N}_{k \tau} - \int_\Omega p^{N}_1,  
\end{align*}
where the last equality comes from Lemma \ref{lemma_link_value_potential} which allows to make the link between the Kantorovich potential $\varphi^{N}_0$ and $\phi^{N}(0^+, \cdot)$.
\end{proof}

We want to pass to the limit $N \to + \infty$. To this extent, we rely on the fact that $\phi^{N}$ satisfies an explicit equation in the sense of distributions. We start to define the distribution which will be the r.h.s. of the Hamilton Jacobi equation. 

\begin{defi}
Let $\alpha^{N}$ and $P^{N}$ the positive measures on $[0,1] \times \Omega$ defined as 
\begin{equation*}
\begin{cases}
\alpha^{N} & := \dst{\tau \sum_{k=1}^{N-1} \delta_{t = k \tau} (p^{N}_{k \tau} + V_N) + \delta_{t=1} p^{N}_1}, \\
P^{N} & := \dst{\tau \sum_{k=1}^{N-1} \delta_{t = k \tau} p^{N}_{k \tau}  + \delta_{t=1} p^{N}_1}. 
\end{cases}
\end{equation*}
More precisely, for any test function $a \in C([0, 1] \times \Omega)$, 
\begin{equation*}
\int_{[0,1] \times \Omega} a \ddr \alpha^{N} := \tau \sum_{k=1}^{N-1} \int_\Omega a(k \tau, \cdot) \left( V_N + p^{N}_{k \tau}  \right) + \int_\Omega a(1, \cdot)  p^{N}_{1},
\end{equation*}
and similarly for $P^{N}$.
\end{defi}

\noindent In other words, $\alpha^{N}$ is, from the temporal point of view, a sum of delta function, each of them corresponding to the jump of the value function $\phi^{N}$. 

\begin{prop}
Provided that we set $\phi^{N}(0^-, \cdot) = \phi^{N}(0^+, \cdot)$ and $\phi^{N}(1^+, \cdot) = \Psi_N$, the following equation holds in the sense of distributions on $[0,1] \times \Omega$:
\begin{equation}
\label{equation_HJ_discrete}
- \dr_t \phi^{N} + \frac{1}{2} |\nabla \phi^{N}|^2 \leqslant \alpha^{N}.
\end{equation}
\end{prop}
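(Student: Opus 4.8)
The plan is to verify \eqref{equation_HJ_discrete} directly in the distributional sense of Definition~\ref{definition_dual}, i.e. to test against an arbitrary nonnegative $f\in C^\infty([0,1]\times\Omega)$ using integration by parts in the time variable only (no spatial integration by parts is involved). The first thing I would record is that $\phi^{N}$ is regular enough on each open strip $I_k:=((k-1)\tau,k\tau)$ for this manipulation to be legitimate. By backward induction on $k$, starting from $\phi^{N}(1^-,\cdot)=\Psi_N+p^{N}_1$, which is Lipschitz by Proposition~\ref{proposition_optimality_conditions_boundary} and assumption (A3), every trace $\phi^{N}((k\tau)^-,\cdot)$ is Lipschitz: the inf-convolution $g\mapsto\inf_{y\in\Omega}\big(|x-y|^2/(2s)+g(y)\big)$ in \eqref{equation_value_function_inside} preserves Lipschitz bounds (the convexity of $\Omega$ makes this clean even when the minimizer lies on $\dr\Omega$), uniformly for $s$ in a compact subinterval of $(0,\infty)$, and the jump formula \eqref{equation_value_function_jump} only adds $\tau(V_N+p^{N}_{k\tau})$, Lipschitz by construction of $V_N$ and by Proposition~\ref{proposition_optimality_conditions}. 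Hence $\phi^{N}$ is locally Lipschitz jointly in $(t,x)$ on each $I_k\times\Omega$, with well-defined one-sided traces at $t=(k-1)\tau$ and $t=k\tau$; in particular it is differentiable a.e. and $t\mapsto\phi^{N}(t,x)$ is absolutely continuous on $\overline{I_k}$ for a.e. $x$.

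Next I would establish the pointwise Hamilton--Jacobi identity on the strips: for a.e. $(t,x)\in I_k\times\Omega$, $-\dr_t\phi^{N}(t,x)+\tfrac12|\nabla\phi^{N}(t,x)|^2=0$. At a point $(t,x)$ of differentiability, pick a minimizer $y^\ast\in\Omega$ in \eqref{equation_value_function_inside} (it exists by compactness of $\Omega$). Since $x\mapsto |x-y^\ast|^2/(2(k\tau-t))+\phi^{N}((k\tau)^-,y^\ast)$ touches $\phi^{N}(t,\cdot)$ from above at $x$, we get $\nabla\phi^{N}(t,x)=(x-y^\ast)/(k\tau-t)$; and since $t\mapsto |x-y^\ast|^2/(2(k\tau-t))+\phi^{N}((k\tau)^-,y^\ast)$ touches $\phi^{N}(\cdot,x)$ from above at $t$, we get $\dr_t\phi^{N}(t,x)=|x-y^\ast|^2/(2(k\tau-t)^2)$. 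Substituting yields the identity. Only $\leq 0$ is actually needed, so no uniqueness of $y^\ast$ is required, and the restriction of the infimum to $\Omega$ causes no trouble since the argument uses one-sided comparisons, never a first-order condition in $y$. As $\dr\Omega$ is Lebesgue-negligible in $\Omega$, this holds a.e. on $I_k\times\Omega$.

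Finally comes the assembly. Splitting $\iint\dr_t f\,\phi^{N}$ over the strips and integrating by parts in $t$ on each $I_k$, the endpoint contributions collect, by \eqref{equation_value_function_jump} and \eqref{equation_value_function_boundary} (and using that $\phi^{N}$ jumps downward by $\tau(V_N+p^{N}_{k\tau})$ at each interior grid time $t=k\tau$, that $\phi^{N}(1^-,\cdot)=\Psi_N+p^{N}_1$, and that there is no jump at $t=0$), into
\begin{equation*}
\sum_{k=1}^{N-1}\int_\Omega f(k\tau,\cdot)\,\tau(V_N+p^{N}_{k\tau})\;-\;\int_\Omega f(0,\cdot)\,\phi^{N}(0^+,\cdot)\;+\;\int_\Omega f(1,\cdot)\,(\Psi_N+p^{N}_1).
\end{equation*}
By the very definition of $\alpha^{N}$, the first and third terms sum to $\int_\Omega f(1,\cdot)\Psi_N+\iint_{[0,1]\times\Omega}f\ddr\alpha^{N}$. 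Adding $\iint\tfrac12 f|\nabla\phi^{N}|^2$ on both sides and rearranging, one gets
\begin{equation*}
\iint_{[0,1]\times\Omega}\dr_t f\,\phi^{N}-\int_\Omega f(1,\cdot)\Psi_N+\int_\Omega f(0,\cdot)\phi^{N}(0^+,\cdot)+\iint_{[0,1]\times\Omega}\tfrac12 f|\nabla\phi^{N}|^2 = \iint_{[0,1]\times\Omega}f\ddr\alpha^{N}+\sum_{k=1}^{N}\iint_{I_k\times\Omega}f\Big(-\dr_t\phi^{N}+\tfrac12|\nabla\phi^{N}|^2\Big),
\end{equation*}
and the last sum is $\leq 0$ because $f\geq 0$ and, by the previous step, the integrand is $\leq 0$ a.e.; this is exactly \eqref{equation_HJ_discrete} in the distributional sense of Definition~\ref{definition_dual}. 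The step I expect to be most delicate is this last one: tracking the sign of the temporal distributional derivative across the downward jumps of $\phi^{N}$ so that it matches the definition of $\alpha^{N}$, together with justifying that the strip-wise integration by parts in $t$ is licit, i.e. that $\phi^{N}$ is absolutely continuous in time up to the endpoints of each $I_k$ with exactly the prescribed one-sided traces. The Hamilton--Jacobi computation on the strips is otherwise classical once the Lipschitz regularity is in hand, and the convexity of $\Omega$ plays no essential role here (only in the clean Lipschitz bound) -- it will become crucial only for the pressure estimates.
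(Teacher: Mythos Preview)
Your proof is correct and follows essentially the same approach as the paper's: both argue that $\phi^{N}$ satisfies the Hamilton--Jacobi equation a.e.\ on each open strip $I_k$ (you via a direct touching argument with a minimizer $y^\ast$, the paper by invoking the Hopf--Lax theory from \cite{Evans2010}), and then identify the negative jumps of $\phi^{N}$ at the grid times $t=k\tau$ with the measure $\alpha^{N}$. Your version is simply more explicit, supplying the backward-inductive Lipschitz bound and the strip-by-strip integration-by-parts assembly that the paper compresses into the remark that $\nabla\phi^{N}$ is bounded and the singular parts of $\dr_t\phi^{N}$ within a strip, if any, are nonnegative.
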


\begin{proof}
As the pressures and the potentials $V_N, \Psi_N$ are Lipschitz, for any $t \in [0,1]$, the value function $\phi^N(t^+, \cdot)$ and $\phi^N(t^-, \cdot)$ are Lipschitz (but with a Lipschitz constant which may diverge as $N \to + \infty$). 

Notice that on each interval $(((k-1) \tau)^+, (k \tau)^-)$, the function $\phi^{N}$ is defined by the Hopf-Lax formula, hence solves the Hamilton-Jacobi equation $- \dr_t \phi^{N} + \frac{1}{2} |\nabla \phi^{N}|^2 = 0$ a.e. \cite[Section 3.3]{Evans2010}. It implies that the inequality $- \dr_t \phi^{N} + \frac{1}{2} |\nabla \phi^{N}|^2 \leqslant 0$ is also satisfied in the sense of distributions, as $\nabla \phi^N$ is bounded and $\dr_t \phi$ may have some singular parts, but they are positive.  

Provided that we set $\phi^{N}(0^-, \cdot) = \phi^{N}(0^+, \cdot)$ and $\phi^{N}(1^+, \cdot) = \Psi_N$, the measure $\dr_t \phi^N$ has a singular negative part at $\{ \tau, 2 \tau, \ldots, 1 \}$ corresponding to the jumps of the function $\phi^N$; but, given \eqref{equation_value_function_boundary} and \eqref{equation_value_function_jump}, the negative part of $\dr_t \phi^N$ is exactly $- \alpha^N$. 
\end{proof}

%

\noindent The next step is to pass to the limit $N \to + \infty$. To this extent, we need uniform bounds on $\alpha^N$, which derive easily from the bounds that we have on the pressure. 

\begin{lm}
\label{lemma_bound_p_approximate}
There exists a constant $C$, independent of $N$, such that $\alpha^{N}([0,1] \times \Omega) \leqslant C$ and $P^{N}([0,1] \times \Omega) \leqslant C$. 
\end{lm}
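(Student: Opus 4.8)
The plan is to estimate $\alpha^N([0,1]\times\Omega)$ and $P^N([0,1]\times\Omega)$ directly from their definitions, using the discrete $L^\infty$ or $L^1$ bounds on the pressures $p^N_{k\tau}$ and the uniform control on $V_N,\Psi_N$. Since
\begin{equation*}
\alpha^N([0,1]\times\Omega)=\tau\sum_{k=1}^{N-1}\int_\Omega\left(V_N+p^N_{k\tau}\right)+\int_\Omega p^N_1
=\tau\sum_{k=1}^{N-1}\int_\Omega V_N + \tau\sum_{k=1}^{N-1}\int_\Omega p^N_{k\tau} + \int_\Omega p^N_1,
\end{equation*}
and similarly $P^N([0,1]\times\Omega)=\tau\sum_{k=1}^{N-1}\int_\Omega p^N_{k\tau}+\int_\Omega p^N_1$, it suffices to bound the three pieces. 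The first, $\tau\sum_{k=1}^{N-1}\int_\Omega V_N\le\int_\Omega V_N\le\|V_N\|_{L^1(\Omega)}$, is uniformly bounded since $V_N\to V$ in $H^1(\Omega)$ (hence in $L^1$, as $\Omega$ is bounded) and the $V_N$ are positive; a Poincaré-type estimate combined with $\|\nabla V_N\|_{L^2}\le\|\nabla V\|_{L^2}$ also works once one fixes the mean. The third piece, $\int_\Omega p^N_1=\|p^N_1\|_{L^1(\Omega)}$, is controlled by Corollary~\ref{corollary_bound_discrete_boundary}: from $\|\nabla p^N_1\|_{L^2(\Omega)}\le\|\nabla\Psi\|_{L^2(\Omega)}$ and the fact that $p^N_1$ vanishes on a set of measure at least $|\Omega|-1>0$, Poincaré gives a uniform $L^1$ (indeed $L^2$) bound depending only on $\Omega$ and $\|\nabla\Psi\|_{L^2}$.

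The only genuinely new point is the middle piece $\tau\sum_{k=1}^{N-1}\int_\Omega p^N_{k\tau}$, which is a sum over $N-1\sim 1/\tau$ time steps, so a naive bound $\|p^N_{k\tau}\|_{L^1}\le C$ for each $k$ would give $\tau\cdot N\cdot C\sim C$, which is in fact already enough. Indeed, Corollary~\ref{corollary_bound_discrete} gives $\|\nabla p^N_{k\tau}\|_{L^2(\Omega)}\le\|\nabla V\|_{L^2(\Omega)}$ for every $k\in\{1,\dots,N-1\}$, with a constant independent of both $N$ and $k$; since each $p^N_{k\tau}$ is nonnegative and vanishes on $\{\bar\rho^N_{k\tau}=1\}$, a set of Lebesgue measure at least $|\Omega|-1$, the Poincaré inequality (applied to $p^N_{k\tau}$ minus a suitable representative, exploiting the large zero set exactly as in Lemma~\ref{lemma_Moser_iterations}) yields $\|p^N_{k\tau}\|_{L^1(\Omega)}\le C_0$ with $C_0$ depending only on $\Omega$ and $\|\nabla V\|_{L^2(\Omega)}$. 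Therefore $\tau\sum_{k=1}^{N-1}\int_\Omega p^N_{k\tau}\le \tau(N-1)C_0\le C_0$.

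Collecting the three bounds, $P^N([0,1]\times\Omega)\le C_0+ C_1$ and $\alpha^N([0,1]\times\Omega)\le \|V\|_{L^1(\Omega)}+C_0+C_1$ for constants depending only on $\Omega$, $\|\nabla V\|_{L^2(\Omega)}$, $\|\nabla\Psi\|_{L^2(\Omega)}$ and $\|V\|_{L^1}$ — in particular independent of $N$. Since $P^N\le\alpha^N$ as positive measures, the single constant $C:=\|V\|_{L^1(\Omega)}+C_0+C_1$ works for both, which is exactly the claim. I do not expect any real obstacle here: the estimate is a direct consequence of the already-established discrete $H^1$ bounds together with the elementary observation that the pressures vanish on a set of positive measure, so that the $L^2$-gradient bound upgrades to an $L^1$ bound by Poincaré; the only mild care needed is to make sure all constants are tracked as independent of the time-slice index $k$, which is transparent from the statements of Corollaries~\ref{corollary_bound_discrete} and~\ref{corollary_bound_discrete_boundary}.
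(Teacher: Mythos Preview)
Your proposal is correct and follows essentially the same argument as the paper: use the uniform $H^1$ gradient bounds on the pressures from Corollaries~\ref{corollary_bound_discrete} and~\ref{corollary_bound_discrete_boundary}, combine with the fact that each pressure vanishes on a set of measure at least $|\Omega|-1$ to get a uniform $L^1$ bound via Poincar\'e, and then sum over $k$ (the $V_N$ contribution being trivially controlled). One small slip to fix: $p^N_{k\tau}$ vanishes on $\{\bar\rho^N_{k\tau}<1\}$, not on $\{\bar\rho^N_{k\tau}=1\}$ (the latter is where it is supported); your measure estimate $|\Omega|-1$ for the zero set is nonetheless correct.
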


\noindent Recall that both $\alpha^N$ and $P^{N}$ are positive measures as we have chosen $V_N$ in such a way that it is positive.

\begin{proof}
We know that the $p^N_{k\tau}$, for $k \in \{ 1,2, \ldots, N \}$ have a gradient which is bounded uniformly in $L^2(\Omega)$. As moreover they all vanish on a set of measure at least $|\Omega| - 1$, they are bounded uniformly (w.r.t. $N$) in $L^1(\Omega)$. This is enough, in order to get the uniform bound on $P^{N}$. Given the way $V_N$ is built, the one for $\alpha^N$ is a straightforward consequence of the one on $P^N$.   
\end{proof}

\noindent Now that we have a bound on $\alpha^N$, to get compactness on the sequence $\phi^{N}$, we use the same kind of estimates used to prove existence of a solution in the dual at the continuous level, see for instance \cite[Section 3]{Cardaliaguet2016}. \review{We recall that $\K$, the set of admissible competitors for the dual problem, was defined in Definition \ref{definition_dual}.}  

\begin{prop}
There exists $(\bar{\phi}, \bar{P}) \in \K$ admissible for the dual problem such that 
\begin{equation*}
\begin{cases}
\dst{\lim_{N \to + \infty} \phi^{N}} = \bar{\phi} & \text{weakly in } \BV([0,1] \times \Omega) \cap L^2([0,1], H^1(\Omega)), \\
\dst{\lim_{N \to + \infty} P^{N}} = \bar{P} & \text{in } \M_+([0,1] \times \Omega). 
\end{cases}
\end{equation*}
\end{prop}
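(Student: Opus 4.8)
The plan is to extract a limit $(\bar\phi,\bar P)$ from the sequences $(\phi^N,P^N)$ in two stages: first obtain compactness, then identify the limit as an admissible competitor for the dual problem. For compactness on $P^N$, one uses Lemma \ref{lemma_bound_p_approximate}: the total masses $P^N([0,1]\times\Omega)$ are uniformly bounded, so by weak-$*$ compactness of bounded sequences in $\M_+([0,1]\times\Omega)$ we may extract a subsequence $P^N \rightharpoonup \bar P$ with $\bar P\in\M_+([0,1]\times\Omega)$; the same holds for $\alpha^N$, whose limit is then $\bar P + V\,\Leb$ on $[0,1)\times\Omega$ plus the singular part on $\{1\}\times\Omega$ (using $V_N\to V$ in $H^1$, hence in $L^1$). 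For compactness on $\phi^N$ one needs uniform bounds in $\BV([0,1]\times\Omega)\cap L^2([0,1],H^1(\Omega))$. Here I would follow the energy-type estimates already used at the continuous level in \cite[Section 3]{Cardaliaguet2016}: testing the discrete Hamilton--Jacobi inequality \eqref{equation_HJ_discrete} against suitable functions, together with the constraint $\phi^N(1^+,\cdot)=\Psi_N$ and the explicit jump structure \eqref{equation_value_function_jump}, yields an $L^\infty_t L^1_x$ (hence $L^1$) bound on $\phi^N$, an $L^2$ bound on $\nabla\phi^N$ coming from the $\frac12|\nabla\phi^N|^2$ term controlled by $\alpha^N([0,1]\times\Omega)+\|\phi^N(0^+)\|_{L^1}$, and a bound on the total variation in time coming again from the jumps (whose sizes sum to $\alpha^N([0,1]\times\Omega)$, uniformly bounded) plus the fact that on each subinterval $\phi^N$ solves Hopf--Lax and is monotone in $t$ at fixed $x$ along characteristics. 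Extracting a further subsequence gives $\phi^N\rightharpoonup\bar\phi$ weakly in $\BV([0,1]\times\Omega)\cap L^2([0,1],H^1(\Omega))$.

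It then remains to check that $(\bar\phi,\bar P)\in\K$, i.e. that the distributional Hamilton--Jacobi inequality of Definition \ref{definition_dual} passes to the limit. The linear terms ($\iint\dr_t f\,\phi$, $\int f(1,\cdot)\Psi$, $\int f(0,\cdot)\phi(0^+,\cdot)$, $\iint fV$, $\iint f\,\ddr P$) pass to the limit by the weak convergences just established — for the last one using $P^N\rightharpoonup\bar P$ against the continuous function $f$, and noting $\phi^N(1^+,\cdot)=\Psi_N\to\Psi$ and $\phi^N(0^+,\cdot)\to\bar\phi(0^+,\cdot)$ (the latter requires a little care with the trace, but weak-$\BV$ convergence plus the one-sided monotonicity at $t=0$ suffices, as in \cite{Cardaliaguet2016}). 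The genuinely delicate term is $\iint \frac12 f|\nabla\phi^N|^2$: here weak $L^2$ convergence of $\nabla\phi^N$ gives only lower semicontinuity, $\iint\frac12 f|\nabla\bar\phi|^2 \le \liminf \iint\frac12 f|\nabla\phi^N|^2$ for $f\ge 0$, which is exactly the direction needed to preserve the inequality \eqref{equation_HJ_discrete} in the limit (since it appears on the smaller side). Combining, $\bar\phi$ satisfies the required distributional inequality with right-hand side $\bar P$ and final datum $\Psi$, and $\bar P\ge 0$, so $(\bar\phi,\bar P)\in\K$.

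The main obstacle is the compactness of $\phi^N$ in $\BV$ in the time variable: one must control the jumps of $\phi^N$ both in size (done via Lemma \ref{lemma_bound_p_approximate}) and, crucially, in sign, so that the negative singular part of $\dr_t\phi^N$ is exactly $-\alpha^N$ and nothing cancels; simultaneously one must rule out blow-up of the spatial Lipschitz constants contributing to the $\BV$ norm. I expect this to be handled exactly as in \cite[Section 3]{Cardaliaguet2016}, the only new point being to verify that the discrete construction \eqref{equation_value_function_inside}--\eqref{equation_value_function_jump}, with its explicit Hopf--Lax interpolation, produces a $\phi^N$ whose time-monotonicity at fixed $x$ along the (sub-)optimal characteristics yields the $\BV$ bound without further work; a secondary subtlety is identifying the limit of the singular parts of $\dr_t\phi^N$ and $P^N$ at $t=1$, which match by the very definition \eqref{equation_value_function_boundary}. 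All remaining verifications are routine applications of lower semicontinuity and of the convergences $V_N\to V$, $\Psi_N\to\Psi$ in $H^1(\Omega)$.
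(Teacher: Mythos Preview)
Your approach is essentially the paper's: extract $\bar P$ from the mass bound of Lemma \ref{lemma_bound_p_approximate}, bound $\nabla\phi^N$ in $L^2$ by integrating the Hamilton--Jacobi inequality over $[0,1]\times\Omega$, bound $\partial_t\phi^N$ in total variation by decomposing it into the jumps (which equal $-\alpha^N$) and the part on the open subintervals (which is non-negative), and pass to the limit in \eqref{equation_HJ_discrete} using weak convergence for the linear terms and lower semicontinuity for $\iint \tfrac12 f|\nabla\phi^N|^2$.

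Two points deserve sharpening. First, as written your estimates are circular: you bound $\iint|\nabla\phi^N|^2$ by $\alpha^N([0,1]\times\Omega)+\|\phi^N(0^+,\cdot)\|_{L^1}$ but do not say how the latter is controlled. The paper breaks this loop by first observing that $\phi^N\geq 0$ on all of $[0,1]\times\Omega$, which follows directly from the positivity of $V_N,\Psi_N$ and of the pressures together with the Hopf--Lax and jump formulas \eqref{equation_value_function_inside}--\eqref{equation_value_function_jump}. Positivity gives $\int_\Omega\phi^N(0^+,\cdot)\geq 0$, so the integrated inequality
\[
\int_\Omega\phi^N(0^+,\cdot)-\int_\Omega\Psi_N+\frac12\iint_{[0,1]\times\Omega}|\nabla\phi^N|^2\leq \alpha^N([0,1]\times\Omega)
\]
bounds $\iint|\nabla\phi^N|^2$ outright; the $L^1$ bound on $\phi^N$ and the bound on $(\partial_t\phi^N)_+$ (via $\iint\partial_t\phi^N=\int\Psi_N-\int\phi^N(0^+,\cdot)\leq\int\Psi_N$) then follow. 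Second, the time-monotonicity is simpler than ``along characteristics'': on each strip $((k-1)\tau,k\tau)$ the Hopf--Lax formula \eqref{equation_value_function_inside} is pointwise non-decreasing in $t$ at each fixed $x$ (since $(k\tau-t)^{-1}$ is increasing), hence $\partial_t\phi^N\geq 0$ there as a distribution and the negative part of $\partial_t\phi^N$ on $[0,1]$ is exactly $-\alpha^N$. With these two clarifications your outline coincides with the paper's proof.
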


\begin{proof}
Given Lemma \ref{lemma_bound_p_approximate}, we know that $P^{N}$ is bounded in $\M_+([0,1] \times \Omega)$ independently of $N$. \review{Up to the extraction of a subsequence}, it converges weakly as a measure to some $\bar{P}$. On the other hand, once we know this convergence, it is easy to see that $\alpha^{N}$ converges as a measure on $\M_+([0,1] \times \Omega)$ to $\bar{P} + V$. 

We have assumed that $V$ and $\Psi$ are positive, and so are $V_N$ and $\Psi_N$, independently of $N$. Using the definition of $\phi^{N}$ and the positivity of the pressures, it is not hard to see that $\phi^{N}$ is positive $[0,1] \times \Omega$. Integrating the Hamilton Jacobi equation w.r.t. space and time and using the bound on $\alpha^{N}$ (Lemma \ref{lemma_bound_p_approximate}), we see that 
\begin{equation}
\label{equation_HJ_integrated}
\int_\Omega \phi^{N}(0^+, \cdot) - \int_\Omega \Psi_N + \frac{1}{2} \iint_{[0,1] \times
 \Omega} |\nabla \phi^{N}|^2 \leqslant C. 
\end{equation} 
Combined with the positivity of $\phi^{N}(0^+, \cdot)$ and a $L^1(\Omega)$ bound on $\Psi_N$, we see that $\nabla \phi^{N}$ is uniformly bounded in $L^2([0,1] \times \Omega)$. 

It remains to get a bound on $\dr_t \phi^{N}$. Of course, as a measure, it can be decomposed as a positive and a negative part. The negative part is concentrated on the instants $\{ \tau, 2 \tau, \ldots, 1 \}$ as $\dr_t \phi^N \geqslant 0$ on the intervals $(((k-1) \tau)^+, (k \tau)^-)$. On the other hand, on $\{ \tau, 2 \tau, \ldots, 1 \}$, the temporal derivative $\dr_t \phi^N$ coincides with $- \alpha^N$, hence the negative part is bounded as a measure. On the other hand, given that
\begin{equation*}
\iint_{[0,1] \times \Omega} \dr_t \phi^N = \int_\Omega \Psi_N - \int_\Omega \phi^N(0^+, \cdot) \leqslant \int_\Omega \Psi_N
\end{equation*}
is bounded independently of $N$, we see that $(\dr_t \phi^N)_+ = \dr_t \phi^N + (\dr_t \phi^N)_-$ is also bounded as a measure.

As a consequence, \review{up to the extraction of a subsequence} we know that $\phi^{N}$ converges weakly in $\BV([0,1] \times \Omega) \cap L^2([0,1], H^1(\Omega))$ to some $\bar{\phi}$. This convergence allows easily to pass to the limit in the Hamilton-Jacobi equation satisfied (in the sense of distributions) by $\phi^{N}$, hence $(\bar{\phi}, \bar{P})$ is admissible in the dual problem.   
\end{proof}

\noindent The last step, to show the optimality of the limit $(\bar{\phi}, \bar{P})$, is to pass to the limit in \eqref{equation_duality_discrete}.  

\begin{prop}
The pair $(\bar{\phi}, \bar{P}) \in \K$ is a solution of the dual problem.  
\end{prop}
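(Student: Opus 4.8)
The plan is to pass to the limit $N\to+\infty$ in the discrete duality inequality of Proposition~\ref{proposition_duality_discrete} and to play it against the absence of duality gap at the continuous level (Theorem~\ref{theorem_duality}). The first step is to read the right-hand side of \eqref{equation_duality_discrete} as $\int_\Omega \phi^{N}(0^+, \cdot)\,\bar{\rho}_0 - P^{N}([0,1]\times\Omega)$, so that Proposition~\ref{proposition_duality_discrete} becomes $\A^{N}(\bar{\rho}^{N}) \leqslant \int_\Omega \phi^{N}(0^+, \cdot)\,\bar{\rho}_0 - P^{N}([0,1]\times\Omega)$. Working along the subsequence on which the convergences of the preceding proposition hold, the left-hand side tends to $\min\{\A(\rho):\rho\in\Gamma_0\}$ by Proposition~\ref{proposition_convergence_primal}, and testing the weak-$*$ convergence $P^{N}\to\bar{P}$ in $\M_+([0,1]\times\Omega)$ against the constant function $1$ (which is continuous on the compact set $[0,1]\times\Omega$) gives $P^{N}([0,1]\times\Omega)\to\bar{P}([0,1]\times\Omega)$.

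The delicate point, which I expect to be the main obstacle, is the passage to the limit in the boundary term $\int_\Omega \phi^{N}(0^+, \cdot)\,\bar{\rho}_0$: weak-$*$ convergence in $\BV$ does not control traces by itself, and a priori $\phi^{N}(0^+,\cdot)$ could carry extra mass concentrated near $\{t=0\}$ that is lost in $\bar{\phi}$. To deal with this I would use the reduction to a smooth $\bar{\rho}_0$ (legitimate by the remark at the beginning of Section~3, since the final estimates do not see the smoothness of $\bar{\rho}_0$), fix a smooth $\eta:[0,1]\to[0,1]$ with $\eta(0)=1$ and $\eta\equiv 0$ in a neighbourhood of $t=1$, and integrate by parts in time to write $\int_\Omega \phi^{N}(0^+, \cdot)\,\bar{\rho}_0 = -\iint_{[0,1)\times\Omega} \eta'(t)\,\bar{\rho}_0(x)\,\phi^{N} - \iint_{[0,1)\times\Omega} \eta(t)\,\bar{\rho}_0(x)\,\mathrm{d}(\dr_t\phi^{N})$. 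Both terms then pass to the limit: $\phi^{N}\to\bar{\phi}$ strongly in $L^1([0,1]\times\Omega)$ by compactness of $\BV$, while $\dr_t\phi^{N}\to\dr_t\bar{\phi}$ weakly-$*$ as measures (its positive and negative parts being bounded, as shown in the proof of the preceding proposition) and $(t,x)\mapsto\eta(t)\bar{\rho}_0(x)$ is continuous and supported away from $t=1$. Undoing the integration by parts identifies the limit as $\int_\Omega \bar{\phi}(0^+, \cdot)\,\bar{\rho}_0$.

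Putting this together yields $\min\{\A(\rho):\rho\in\Gamma_0\} \leqslant \int_\Omega \bar{\phi}(0^+, \cdot)\,\bar{\rho}_0 - \bar{P}([0,1]\times\Omega) = \B(\bar{\phi},\bar{P})$. Since the previous proposition already establishes $(\bar{\phi},\bar{P})\in\K$, the continuous duality of Theorem~\ref{theorem_duality} gives the reverse inequality $\B(\bar{\phi},\bar{P}) \leqslant \max\{\B(\phi,P):(\phi,P)\in\K\} = \min\{\A(\rho):\rho\in\Gamma_0\}$, so that $\B(\bar{\phi},\bar{P}) = \min\A = \max\B$ and $(\bar{\phi},\bar{P})$ is a solution of the dual problem. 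Finally — this being the ``wrapping'' step of Theorem~\ref{theorem_main} rather than part of the proposition itself — one transfers the discrete bounds of Corollary~\ref{corollary_bound_discrete} and Corollary~\ref{corollary_bound_discrete_boundary} through the weak convergence $P^{N}\to\bar{P}$ to obtain the $H^1$ and $L^\infty$ estimates on $\bar{p}$ and $\bar{P}_1$.
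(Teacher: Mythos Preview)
Your overall strategy is exactly the paper's: rewrite the right-hand side of \eqref{equation_duality_discrete} as $\int_\Omega \phi^{N}(0^+,\cdot)\bar{\rho}_0 - P^{N}([0,1]\times\Omega)$, pass to the limit, and conclude via Theorem~\ref{theorem_duality}. The treatment of $P^{N}([0,1]\times\Omega)$ and the invocation of Proposition~\ref{proposition_convergence_primal} are fine.

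The gap is in your handling of the trace term. Your integration-by-parts identity is correct, but the passage to the limit in $\iint \eta(t)\bar{\rho}_0(x)\,\mathrm{d}(\dr_t\phi^{N})$ is not justified. Weak convergence of $\phi^{N}$ in $\BV$ only gives $\dr_t\phi^{N}\to\dr_t\bar{\phi}$ weakly-$*$ as measures on the \emph{open} set $(0,1)\times\Omega$, i.e.\ against test functions with compact support there. Your test function $\eta(t)\bar{\rho}_0(x)$ satisfies $\eta(0)=1$, so it does not vanish at $t=0$; mass of $\dr_t\phi^{N}$ can escape to $\{0\}\times\Omega$ and your argument would miss it. (A one-dimensional model: $\phi^{N}(t)=(1-Nt)_+$ has $\phi^{N}(0^+)=1$, $\phi^{N}\to 0$ in $L^1$, $\dr_t\phi^{N}\to -\delta_0$ on $[0,1]$ but $\dr_t\bar{\phi}=0$.) The reduction to smooth $\bar{\rho}_0$ does not help with this; the paper actually never uses smoothness of $\bar{\rho}_0$ here, only $\bar{\rho}_0\leqslant 1$.

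The paper's resolution is different and targets only the one-sided inequality $\limsup_N\int_\Omega\phi^{N}(0^+,\cdot)\bar{\rho}_0\leqslant\int_\Omega\bar{\phi}(0^+,\cdot)\bar{\rho}_0$. It writes the right-hand side as $\lim_{t\to 0}\frac{1}{t}\iint_{[0,t]\times\Omega}\bar{\phi}\,\bar{\rho}_0$, fixes $t>0$, passes $N\to\infty$ in the average (this only needs $L^1$ convergence of $\phi^{N}$), and for each $N$ uses the one-sided bound $\dr_t\phi^{N}\geqslant -\alpha^{N}$ to compare the average on $[0,t]$ with $\int_\Omega\phi^{N}(0^+,\cdot)\bar{\rho}_0$ up to an error $\iint_{[0,t]\times\Omega}\alpha^{N}$. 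Sending $t\to 0$ afterwards, the error vanishes \emph{because} $(\bar{P}+V)(\{0\}\times\Omega)=0$, which follows from the uniform $L^\infty_tH^1_x$ bounds of Corollary~\ref{corollary_bound_discrete}. This ``no concentration at $t=0$'' fact is precisely the missing ingredient in your argument; if you insert it (e.g.\ by letting your cutoff $\eta$ depend on a small parameter and sending it to $0$ after $N\to\infty$), your approach can be repaired, but as written it does not close.
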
 

\begin{proof}
We have already proved in Proposition \ref{proposition_convergence_primal} that
\begin{equation*}
\lim_{N \to + \infty} \A^{N}(\bar{\rho}^{N})  = \min \left\{ \A(\rho) \ : \ \rho \in \Gamma_0 \right\}.   
\end{equation*} 
Given \eqref{equation_duality_discrete} and the duality result which holds for the continuous problem (Theorem \ref{theorem_duality}), it is enough to show that 
\begin{equation*}
\limsup_{N \to + \infty} \left( \int_\Omega \phi^{N}(0^+, \cdot) \bar{\rho}_0 - \tau \sum_{k=1}^{N-1} \int_\Omega p^{N}_{k \tau} - \int_\Omega p^{N}_1 \right) \leqslant \B(\bar{\phi}, \bar{P}) = \int_\Omega \bar{\phi}(0^+, \cdot) \bar{\rho}_0 - \bar{p}([0,1] \times \Omega).
\end{equation*} 
The convergence of the term involving the pressure is quite easy to show. Indeed, given the positivity of the pressures, 
\begin{equation*}
\tau \sum_{k=1}^{N-1} \int_\Omega p^{N}_{k \tau} + \int_\Omega p^{N}_1 = P^{N} ([0,1] \times \Omega) \to \bar{p}([0,1] \times \Omega)
\end{equation*}
by weak convergence. On the other hand, using the definition of the trace,  
\begin{equation*}
\int_\Omega \bar{\phi}(0^+,x) \bar{\rho}_0 = \lim_{t \to 0} \frac{1}{t} \iint_{[0,t] \times \Omega} \bar{\phi}(s,x)  \bar{\rho}_0(x)  \ddr s \ddr x.
\end{equation*}  
We fix some $t>0$. Due to the convergence of $\phi^{N}$ to $\bar{\phi}$, it clearly holds
\begin{equation*}
\lim_{N \to + \infty} \frac{1}{t} \iint_{[0,t] \times \Omega} \phi^{N}(s,x)  \bar{\rho}_0(x) \ddr s \ddr x = \frac{1}{t} \iint_{[0,t] \times \Omega} \bar{\phi}(s,x)  \bar{\rho}_0(x) \ddr s \ddr x. 
\end{equation*}  
For the value function $\phi^{N}$, we can use the information that we have on the temporal derivative, namely $\dr_t \phi^{N} \geqslant - \alpha^{N}$. It allows us to write, given the positivity of $\bar{\rho}_0$, 
\begin{align*}
\frac{1}{t} \iint_{[0,t] \times \Omega} \phi^{N}(s,x)  \bar{\rho}_0(x) \ddr s \ddr x  & = \frac{1}{t} \iint_{[0,t] \times \Omega} \left( \phi^{N}(0^+,x) + \int_0^s \dr_t \phi^{N}(r,x) \ddr r \right) \bar{\rho}_0(x) \ddr s \ddr x \\
& \geqslant \int_\Omega \phi^{N}(0^+, \cdot) \bar{\rho}_0 - \frac{1}{t} \iint_{[0,t] \times \Omega} s \alpha^{N}(s,x) \bar{\rho}_0(x) \ddr s \ddr x  \\
&\geqslant \int_\Omega \phi^{N}(0^+, \cdot)  \bar{\rho}_0(x) - \iint_{[0,t] \times \Omega}  \alpha^{N}(s,x) \bar{\rho}_0(x) \ddr s \ddr x.
\end{align*}
Now, recall that $\bar{\rho}_0 \leqslant 1$ and $\alpha^N$ converges as a measure to $\bar{p} + V$, hence 
\begin{equation*}
\frac{1}{t} \iint_{[0,t] \times \Omega} \bar{\phi}(s,x)  \bar{\rho}_0(x) \ddr s \ddr x = \lim_{N \to + \infty} \frac{1}{t} \iint_{[0,t] \times \Omega} \phi^{N}(s,x)  \bar{\rho}_0(x) \ddr s \ddr x \geqslant \limsup_{N \to + \infty} \left( \int_\Omega \phi^{N}(0^+, \cdot)  \bar{\rho}_0 \right) - (\bar{p} + V)([0,t] \times \Omega). 
\end{equation*}
Now we send $t$ to $0$, and use the fact that $(\bar{p} + V)(\{ 0 \} \times \Omega) = 0$ (this can be seen as a consequence of Corollary \ref{corollary_bound_discrete}) to conclude that 
\begin{equation*}
\limsup_{N \to + \infty} \int_\Omega \phi^{N}(0^+, \cdot)  \bar{\rho}_0 \leqslant \int_\Omega \bar{\phi}(0^+,x) \bar{\rho}_0,
\end{equation*}
which gives us the announced result. 
\end{proof}

To reach the conclusion of our main theorem, it is enough to show that $\bar{P}$ has the regularity we announced. But this easily derives from the weak convergence of $P^{N}$ to $\bar{P}$ and the estimates of Corollary \ref{corollary_bound_discrete} and Corollary \ref{corollary_bound_discrete_boundary}.

\begin{proof}[Proof of Theorem \ref{theorem_main}]
\label{page_proof_theorem}
For any smooth test functions $a,b$ (with $a$ being real-valued and $b$ being vector valued), given the convergence of $P^{N}$ to $\bar{p}$ it holds
\begin{equation*}
\begin{cases}
\dst{\iint_{[0,1] \times \Omega} a \bar{P}} & = \dst{\lim_{N \to + \infty} \left( \sum_{k=1}^{N-1} \tau \int_\Omega a(k \tau, \cdot) p^{N}_{k \tau} + \int_\Omega a(1, \cdot) p^N_1  \right)}, \\
\dst{\iint_{[0,1] \times \Omega} (\nabla \cdot b) \bar{P}} &= \dst{ - \lim_{N \to + \infty} \left( \sum_{k=1}^{N-1} \tau \int_\Omega g(k \tau, \cdot) \cdot \nabla(p^{N}_{k \tau}) + \int_\Omega b(1, \cdot) \cdot \nabla p^N_1  \right)},
\end{cases}
\end{equation*}
where on the second line we have done an integration by parts in the r.h.s. Using the estimates given by Corollary \ref{corollary_bound_discrete} and Corollary \ref{corollary_bound_discrete_boundary}, it is clear that
\begin{equation*}
\iint_{[0,1] \times \Omega} (\nabla \cdot b) \bar{P} \leqslant \int_0^1 \left( \| b(t, \cdot) \|_{L^2(\Omega)} \|\nabla V \|_{L^2(\Omega)} \right) \ddr t + \| b(1, \cdot) \|_{L^2(\Omega)} \|\nabla \Psi \|_{L^2(\Omega)}.
\end{equation*}
On the other hand, if $V, \Psi \in W^{1,q}$ with $q > d$ then, using the same propositions, 
\begin{equation*}
\iint_{[0,1] \times \Omega} a \bar{P} \leqslant C \int_0^1 \left( \| a(t, \cdot) \|_{L^1(\Omega)} \right) \ddr t + \| a (1, \cdot) \|_{L^\infty(\Omega)},
\end{equation*}
where $C$ depends only on $\| \nabla V \|_{L^q(\Omega)}, \| \nabla \Psi \|_{L^q(\Omega)}$ and $\Omega$. Standard functional analysis manipulations provide the conclusion of Theorem \ref{theorem_main}.
\end{proof}

\section*{Acknowledgments}

Both authors acknowledge the support of the French ANR via the contract MFG (ANR- 16-CE40-0015-01) and benefited from the support of the FMJH ``Program Gaspard Monge for Optimization and operations research and their interactions with data science'' and EDF via the PGMO project VarPDEMFG.

%
%
%
%


\begin{thebibliography}{99}

\bibitem{Ambrosio2003}
{\sc L. Ambrosio, P. Tilli.} \emph{Topics on analysis in metric spaces}. Vol. 25. Oxford University Press on Demand, 2004.


\bibitem{af} {\sc L. Ambrosio, A. Figalli}, On the regularity of the pressure field of Brenier's weak solutions to incompressible Euler equations, {\it Calc. Var. PDE}, 31 (2008) No. 4, 497-509.

\bibitem{af2} {\sc L. Ambrosio, A. Figalli}, Geodesics in the space of measure-preserving maps and plans, {\it Arch. Rational Mech. Anal.}, 194 (2009), 421-462.


\bibitem{Ambrosio2008}
{\sc L. Ambrosio, N. Gigli, G. Savaré.} \emph{Gradient flows in metric spaces and in the space of probability measures}. Springer Science , 2008.


\bibitem{BarMon} {\sc A. Baradat, L. Monsaingeon}
Small noise limit and convexity for generalized incompressible flows, Schr\"odinger problems, and optimal transport, preprint 2018, 	arXiv:1810.12036.

\bibitem{BenCarSan}
{\sc J. D. Benamou, G. Carlier, F. Santambrogio,}
Variational Mean Field Games, in {\it Active Particles, Volume 1: Theory, Models, Applications}, edited by N. Bellomo, P. Degond, E. Tadmor, 141--171, 2017.




\bibitem{br} {\sc Y. Brenier}, Minimal geodesics on groups of volume-preserving maps and generalized solutions of the Euler equations, {\it Comm. Pure Appl. Math.}, 52 (1999) 4, 411--452.

\bibitem{Carnotes}
{\sc P. Cardaliaguet}, Notes on Mean Field Games, unublished, available online at {\tt https://www.ceremade.dauphine.fr/$\sim$cardalia/MFG20130420.pdf}



\bibitem{Cardaliaguet2015}
{\sc P. Cardaliaguet,} Weak solutions for first order mean field games with local coupling. \emph{Analysis and geometry in control theory and its applications}. Springer, Cham, 2015. 111-158.

\bibitem{CardaliaguetG2015}
{\sc P. Cardaliaguet, P. J. Graber}. Mean field games systems of first order. \emph{ESAIM: Control, Optimisation and Calculus of Variations} 21.3 (2015): 690-722.

\bibitem{Cardaliaguet2016}
{\sc P. Cardaliaguet, A. R. Mészáros, F. Santambrogio} First order mean field games with density constraints: pressure equals price. \emph{SIAM Journal on Control and Optimization} 54.5 (2016): 2672-2709.

\bibitem{CardaliaguetPT2015}
{\sc P. Cardaliaguet, A. Porretta, D. Tonon.} Sobolev regularity for the first order Hamilton–Jacobi equation. \emph{Calculus of Variations and Partial Differential Equations} 54.3 (2015): 3037-3065.

\bibitem{Evans2010}
{\sc L. C. Evans} {\it Partial differential equations.} AMS Graduate Studies in Mathematics, 2010 (2nd edition).


\bibitem{HMC} {\sc M. Huang, R.P. Malham\'e, P.E. Caines}, Large population stochastic dynamic games: closed-loop McKean-Vlasov systems and the Nash certainty equivalence principle, {\it Communication in information and systems}, 6 (2006), No. 3, 221-252. 


    \bibitem{JKO} {\sc R. Jordan, D. Kinderlehrer, F. Otto}, The variational formulation of the Fokker-Planck equation,  {\it SIAM J. Math. Anal.} 29 (1), 1--17, 1998.

\bibitem{Lav} {\sc H. Lavenant}, Time-convexity of the entropy in the multiphasic formulation of the incompressible Euler equation, {\it Calculus of Variations and PDEs}, Vol 56 no 6 (2017), 170.



\bibitem{LL06cr2} {\sc J.-M. Lasry, P.-L. Lions}, Jeux \`a champ moyen. II. Horizon fini et contr\^ole optimal,
{\it C. R. Math. Acad. Sci. Paris},  343  (2006),  No. 10, 679--684.

\bibitem{LL07mf} {\sc J.-M. Lasry, P.-L. Lions}, Mean field games,  {\it Jpn. J. Math.},  2  (2007),  no. 1, 229--260.

\bibitem{LLperso} {\sc P.-L. Lions}, {\it Cours au Coll\`{e}ge de France}, \url{www.college-de-france.fr}.




\bibitem{LS2017}
\review{{\sc H. Lavenant, F. Santambrogio.} Optimal density evolution with congestion: L$^\infty$ bounds via flow interchange techniques and applications to variational Mean Field Games. \emph{Communications in Partial Differential Equations}, Vol 43 no 12 (2018), p. 1761--1802.}

\bibitem{Loeper2006}
{\sc G. Loeper} The reconstruction problem for the Euler-Poisson system in cosmology. \emph{Archive for rational mechanics and analysis} 179.2 (2006): 153-216.

\bibitem{Maury2010}
{\sc B. Maury, A. Roudneff-Chupin, F. Santambrogio}, A macroscopic crowd motion model of gradient flow type. \emph{Mathematical Models and Methods in Applied Sciences} 20.10 (2010): 1787-1821.

\bibitem{Moser1960}
{\sc J. Moser} A new proof of De Giorgi's theorem concerning the regularity problem for elliptic differential equations.\emph{Communications on Pure and Applied Mathematics} 13.3 (1960): 457-468.

\bibitem{modestproposal} {\sc F. Santambrogio}, A modest proposal for MFG with density constraints, {\it Netw. Heterog. Media}, 7 (2012) No. 2, 337--347.

\bibitem{survey-crowd} {\sc F. Santambrogio},
Crowd motion and population dynamics under density constraints, to appear in the Proceedings of the SMAI 2017 conference, in ({\it ESAIM Proceedings and Surveys}).




\bibitem{OTAM}
 {\sc F. Santambrogio},
\emph{Optimal transport for applied mathematicians: calculus of variations, PDEs, and modeling}. 
Vol. 87. 
Birkhäuser, 
2015.

\bibitem{Villani2003}
{\sc C. Villani} \emph{Topics in optimal transportation}. No. 58. American Mathematical Soc., 2003.

\end{thebibliography}
\end{document}